\documentclass[11pt,reqno]{amsart}
\usepackage{amsmath,amsthm,amssymb,enumerate}
\usepackage{hyperref}
\usepackage{mathrsfs}
\usepackage{nicefrac}
\usepackage{mhequ}

\usepackage{xfrac}
\usepackage{color}

\numberwithin{equation}{section}

\newtheorem{theorem}{Theorem}[section]
\newtheorem{lemma}[theorem]{Lemma}

\newtheorem*{theorem*}{Theorem}

\theoremstyle{definition}
\newtheorem{assumption}{Assumption}[section]
\newtheorem{definition}{Definition}[section]

\theoremstyle{remark}
\newtheorem{remark}{Remark}[section]

%
\newcommand\bR{\mathbb{R}}
\newcommand\bP{\mathbb{P}}

\newcommand{\bN}{\mathbb{N}}

\newcommand*{\beq}{\begin{equation}}
\newcommand*{\eeq}{\end{equation}}
\newcommand{\E}{\mathbb{E}}

\newcommand{\bit}{\begin{itemize}}
\newcommand{\eit}{\end{itemize}}
\newcommand\D{\partial}

\DeclareMathOperator*{\esssup}{ess\,sup}

%
%
\begin{document}

\title[$L_\infty$-estimates for degenerate SPDE]{Supremum estimates for degenerate, quasilinear stochastic partial differential equations}

\author[K. Dareiotis and B. Gess]{Konstantinos Dareiotis and Benjamin Gess}

\address[K. Dareiotis]{Max Planck Institute for Mathematics in the Sciences, Inselstrasse 22, 04103 Leipzig, Germany}
\email{konstantinos.dareiotis@mis.mpg.de}

\address[B. Gess]{Max Planck Institute for Mathematics in the Sciences, Inselstrasse 22, 04103 Leipzig and Faculty of Mathematics,
University of Bielefeld, 
33615 Bielefeld, 
Germany}
\email{benjamin.gess@gmail.com}

\begin{abstract}
We prove a priori estimates in $L_\infty$ for a class of quasilinear stochastic partial differential equations. 
The estimates are obtained independently of the ellipticity constant $\varepsilon$ and thus imply analogous estimates for degenerate quasilinear stochastic partial differential equations, such as the stochastic porous medium equation.
\end{abstract}
\maketitle

\section{Introduction}
We consider quasilinear stochastic partial differential equations (SPDEs) of the form\footnote{Throughout the article we use the summation convention with respect to integer valued repeated indices.}
\begin{equation}      \label{eq: quasi}
\begin{aligned}                   
du &=\left[ \D_i \left( a^{ij}_t(x,u)\D_j u  +F^i_t(x,u)\right)+ F_t(x,u)  \right] \, dt 
\\
& + \left[  \D_i \big( g^{ik}_t(x,u) \big)+G^k_t(x, u) \right] \, d \beta^k_t,
\\
u_0&= \xi,
\end{aligned}
\end{equation}
for $(t,x) \in [0,T] \times Q=:Q_T$, with zero Dirichlet conditions on $\D Q$, for some bounded open set $Q \subset \bR^d$ and $\beta^k$ being independent Wiener processes. 

In this work, using Moser's iteration techniques (see e.g.\ \cite{MOS}),  we prove the following: First, roughly speaking, we show that if the initial condition $\xi$ is in $L_\infty$ then the solution is in $L_\infty$ for all times $t \geq 0$. Second, we show a regularizing effect, that is, if the initial condition is in $L_2$, then for all $t >0$ the solution $u(t)$ is in $L_\infty$ and the corresponding norm blows up at a rate of $t^{-\tilde{\theta}}$, for some constant $\tilde{\theta}>0$, as $t \searrow 0$.

A key point in these results is that the obtained estimates are uniform with respect to the ellipticity constant of the diffusion coefficients $a^{ij}$ and thus can be applied to the case of degenerate, quasilinear SPDE, such as the porous medium equation.

More precisely,  under certain conditions on the coefficients $a^{ij}, F^i, F, g^{ik}, G^k$ (see Assumptions \ref{as: nd}-\ref{as:boundednessV} below for details), we prove the following $L^\infty$ bounds: 
\begin{theorem*}[see Theorems \ref{thm: quasilinear nd} and \ref{thm: smoothing2}]     
Let $\alpha>0$, {\color{black}$\mu \in [2, \infty] \cap ((d+2)/2,\infty]$}. There exists constants $N$, $\tilde \theta>0$ such that if $u$  is a solution of  \eqref{eq: quasi}, then 
\begin{equs}    
\E \|u \|^\alpha _{L_\infty(Q_T)} \ \leq N \E \left( 1+\|\xi \|_{L_\infty(Q)}^\alpha {\color{black}+\|V^1\|_{L_\mu(Q_T)}^\alpha+ \|V^2\|_{L_{2\mu}(Q_T)}^\alpha } \right),
\end{equs}
and 
\begin{equation*}         
\E \|u \|^\alpha _{L_\infty((\rho, T) \times Q)} \ \leq \rho^{-\tilde{\theta}} N \E \left( 1+\|\xi \|_{L_2(Q)}^\alpha {\color{black}+\|V^1\|_{L_\mu(Q_T)}^\alpha+ \|V^2\|_{L_{2\mu}(Q_T)}^\alpha} \right),
\end{equation*}
for all $\rho \in (0,T)$.

\end{theorem*}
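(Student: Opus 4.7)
Following the strategy announced in the introduction, I would use a Moser-type iteration combined with stochastic energy inequalities. First I would apply Itô's formula to the functional $\Phi_q(u) := \int_Q ((u-k)^+)^q\, dx$ for $q \geq 2$ and a threshold $k\ge 0$. Integration by parts in the drift produces the good term $-q(q-1)\int a^{ij}\D_i u\, \D_j u\, ((u-k)^+)^{q-2}\, dx$, while the Itô correction from $\D_i g^{ik}+G^k$ contributes $\tfrac{q(q-1)}{2}\int ((u-k)^+)^{q-2}\sum_k|\D_i g^{ik}+G^k|^2\, dx$, whose most dangerous component behaves like $(\D_u g^{ik})^2(\D_i u)(\D_j u)$. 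The stochastic parabolicity in Assumption \ref{as: nd} guarantees that the combination dominates $c\,q(q-1)\int a^{ij}\D_i v_q\,\D_j v_q\, dx$, where $v_q := ((u-k)^+)^{q/2}$, modulo lower-order forcings controlled by $V^1$ and $V^2$. Taking expectation and handling the martingale by BDG yields an energy inequality of the schematic form
\begin{equs}
\E \sup_{t \leq T}\|v_q(t)\|^2_{L_2(Q)} + \E\int_0^T\!\!\int_Q a^{ij}\D_i v_q\,\D_j v_q\, dx\, dt \leq N q\, \E\, \Psi_q,
\end{equs}
where $\Psi_q$ encodes the initial datum and the level-$q$ forcing.

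Next, I would exploit Sobolev's embedding $W^{1,2}_0(Q)\hookrightarrow L_{2d/(d-2)}(Q)$ (with the standard modification when $d\le 2$), combined via interpolation with the $L_\infty([0,T];L_2(Q))$ control provided by the first term above, to obtain a parabolic gain of integrability: for some $\kappa=\kappa(d,\mu)>1$,
\begin{equs}
\E\|v_q\|_{L_{2\kappa}(Q_T)}^{2\kappa} \leq (Nq)^{2\kappa}\, \E\, \Psi_q^{\kappa}.
\end{equs}
Reinterpreting this as a bound on the $L_{q\kappa}$-norm of $(u-k)^+$ in terms of its $L_q$-norm and iterating along $q_n := q_0\kappa^n$, the product of the successive constants converges. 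Setting $q_0=2$ and $k=\|\xi\|_{L_\infty(Q)}$ delivers the first estimate.

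For the smoothing statement I would insert a time weight $\chi^{\gamma_n}(t)$ in the energy identity at level $q_n$, with $\chi\in C^\infty$ increasing, $\chi(0)=0$ and $\chi(\rho)=1$, choosing $\gamma_n$ so that the parasitic term $\gamma_n\chi'\chi^{\gamma_n-1}\Phi_{q_n}$ produced by the product rule is absorbed by the outcome of the previous step of the iteration. Tracking the exponents yields a limit $\tilde\theta=\lim_n \gamma_n/q_n>0$, which generates the $\rho^{-\tilde\theta}$ blow-up. The iteration now starts from $q_0=2$ and $k=0$, since $\xi$ is only assumed in $L_2$.

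The main obstacle will be the coercivity argument in the very first step: after absorbing the Itô correction the coefficient in front of $\int a^{ij}\D_i v_q\,\D_j v_q\, dx$ has to remain strictly positive and proportional to $q(q-1)$ \emph{uniformly} in the ellipticity parameter $\varepsilon$, otherwise the iteration constants would explode upon passage to the degenerate limit. This is precisely where the structural stochastic parabolicity hypothesis is decisive, and its precise form dictates the $L_\mu$ and $L_{2\mu}$ integrability imposed on $V^1$ and $V^2$ in the statement.
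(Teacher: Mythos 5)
Your overall plan---It\^o's formula on a power functional, stochastic parabolicity to produce a weighted gradient term, a parabolic Sobolev embedding to gain integrability, and a Moser-type iteration---is indeed the strategy the paper follows, but there are two genuine gaps that would cause your iteration to fail as written.

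First, the iteration exponents. You propose iterating geometrically along $q_n = q_0\kappa^n$. But the structural stochastic parabolicity hypothesis gives coercivity of the form $(c|u|^{\tilde m}+\theta)|\nabla u|^2$, and to obtain estimates \emph{uniform in} $\theta$ (the entire point of the statement, since the constants $N$, $\tilde\theta$ must not blow up as the equation degenerates) one must use only the $c|u|^{\tilde m}$ part. This forces you to work with $\nabla |u|^{(\tilde m + p)/2}$ rather than $\nabla |u|^{p/2}$, and the Sobolev/Gagliardo--Nirenberg gain then sends $p\mapsto \tilde m + \bar\gamma p$ (with $\bar\gamma = (1+2/d)/\mu'>1$), producing the arithmetic--geometric sequence $p_n = \tilde m(1+\bar\gamma+\dots+\bar\gamma^n)$ used in the paper, not $q_0\kappa^n$. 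Your geometric iteration would either rely on the $\theta$ term (losing uniformity in $\theta$) or fail to close because the exponent of $u$ coming out of each step is shifted by $\tilde m$.

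Second, the moment bookkeeping. Your schematic energy inequality controls first moments: $\E\sup_t \|v_q\|^2 + \E\int a^{ij}\D_i v_q\D_j v_q \le Nq\,\E\Psi_q$. From this you cannot deduce $\E\|v_q\|_{L_{2\kappa}}^{2\kappa}\le (Nq)^{2\kappa}\E\Psi_q^\kappa$: the embedding gives a product of the supremum and the gradient integral, which after Young's inequality requires $\E[\cdots]^\kappa \lesssim (\E[\cdots])^\kappa$, a reverse Jensen. And since you want all $\alpha>0$, including $\alpha<1$, you need sub-unit moments, for which BDG is the wrong tool (it does not give $\E\sup|M|^\sigma$ for $\sigma<1$ from $\E\langle M\rangle^{\sigma/2}$ without loss). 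The paper resolves this by establishing a conditional-expectation comparison $\E(X_\tau\mid\cF_0)\le\E(Y_\tau\mid\cF_0)$ and invoking the Revuz--Yor lemma (Lemma~\ref{lem: Revuz-Yor}), which yields $\E\sup X^\sigma\le C_\sigma\E Y^\sigma$ for any $\sigma\in(0,1)$; this is what makes the Moser iteration close at level $\eta=\alpha\mu'/(\delta\bar\gamma^n)<1$. Without this (or an equivalent device) the iteration does not propagate. Two smaller divergences: the paper iterates on $\|u\|_{L_p}^p$ directly with the floor $A_q=(1+\|\xi\|_{L_\infty})^q$ inserted via $A_q\vee(\cdot)$, avoiding your level-set truncation $((u-k)^+)^q$ (which introduces additional technical issues, e.g.\ the divergence-form drift terms need to vanish after integration, requiring $\mathcal{R}_p(F^i)(\cdot,u)\in W^{1,1}_0$); and for the smoothing estimate the paper uses a sequence of cut-offs $\psi$ supported on $[r_n,T]$ with $|\psi'|\lesssim 2^n/\rho$ rather than a single power weight $\chi^{\gamma_n}$, though your variant could plausibly be made to work.
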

{\color{black}In the above theorem $V^1$ and $V^2$ are functions that can be regarded as dominating any existing ``free terms" coming from the drift part and the noise part of the equation, respectively (cf. Assumption \ref{as: nd} below).}

A key point in the two estimates above is that the constants $N$ and $\tilde{\theta}$ are independent of the ellipticity constant of the diffusion coefficients $a^{ij}$. Hence, the established estimates carry over without change to degenerate SPDE such as stochastic porous media equations
\begin{equation}               \label{eq: PME stratonovich}
\begin{aligned}
du = & \left[ \Delta \left( |u|^{m-1}u \right) +f_t(x) \right]  \, dt + \sum_{i=1}^d \sigma_t\D_i u \circ d\tilde{\beta}^i_t\\
+&  \sum_{k=1}^\infty\left[\nu^k_t(x) u  +g^k_t(x) \right] dw^k_t
\\
u_0= & \xi,
\end{aligned}
\end{equation}
with zero Dirichlet conditions on $\D Q$  and  $m \in (1, \infty)$, where $\tilde{\beta}^1_t,...\tilde{\beta}^d_t, w^1_t,w^2_t,...$  are independent $\bR$-valued standard  Wiener processes. The corresponding theorem reads as follows:
\begin{theorem*}[see Theorems \ref{thm: theorem boundedness} and \ref{thm: smoothing PME}]
Let ${\color{black}\mu \in [2, \infty] \cap ((d+2)/2,\infty]}$. There exists constants $N$, $\tilde \theta>0$ such that if $u$  is a solution of 
  \eqref{eq: PME stratonovich}, then 
\begin{equation*}          
\E \|u \|^2 _{L_\infty(Q_T)} \ \leq N \E \left( 1+\|\xi \|_{L_2(Q)}^2{\color{black} +\|f\|^2_{L_\mu(Q_T)}+ \||g|_{l_2}\|^2_{L_{2\mu}(Q_T)}}\right),
\end{equation*}
and 
\begin{align*}          
\E \|u \|^2 _{L_\infty((\rho, T) \times  Q)} \ &\leq \rho^{-\tilde{\theta}} N \E \left(1+\| \xi \|_{H^{-1}}^2 {\color{black}+\|f\|^2_{L_\mu(Q_T)}+ \||g|_{l_2}\|^2_{L_{2\mu}(Q_T)}} \right),
\end{align*}
for all $\rho \in (0,T)$. 
\end{theorem*}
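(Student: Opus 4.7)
The plan is to recast the porous medium equation \eqref{eq: PME stratonovich} as a special case of the quasilinear form \eqref{eq: quasi} and then invoke Theorems \ref{thm: quasilinear nd} and \ref{thm: smoothing2}. First, I convert the Stratonovich transport $\sigma_t \D_i u \circ d\tilde{\beta}^i_t$ to It\^o form; the correction contributes $\tfrac{1}{2}\D_i(\sigma_t^2 \D_i u)\,dt$ to the drift, while the It\^o noise can be rewritten as $\D_i(\sigma_t u)\,d\tilde{\beta}^i_t - u\,\D_i \sigma_t\,d\tilde{\beta}^i_t$, fitting the template $\D_i g^{ik}_t(x,u) + G^k_t(x,u)$. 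Since $\Delta(|u|^{m-1}u) = \D_i(m|u|^{m-1}\D_i u)$, the effective diffusion matrix reads $a^{ij}_t(x,u) = (m|u|^{m-1} + \tfrac{1}{2}\sigma_t^2)\delta^{ij}$, and the linear noise $\nu^k u + g^k$ matches $G^k$ directly. Identifying $f$ with $V^1$ and $|g|_{l_2}$ with $V^2$, the structural hypotheses (cf.\ Assumption \ref{as: nd}) are satisfied, except that $a^{ij}$ degenerates at $u = 0$.

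To bypass the degeneracy, I apply the abstract theorems to the non-degenerate approximations with $a^{ij}_\varepsilon(u) = a^{ij}(u) + \varepsilon\delta^{ij}$, whose solutions $u_\varepsilon$ exist by standard variational theory. The decisive property of Theorems \ref{thm: quasilinear nd} and \ref{thm: smoothing2} is that the constants $N$ and $\tilde{\theta}$ do not depend on the ellipticity constant; hence the two target estimates, transferred via the identification above, hold for $u_\varepsilon$ uniformly in $\varepsilon$. Passing to the limit $\varepsilon \downarrow 0$, using known convergence results for non-degenerate approximations of the PME together with the lower semicontinuity of $\|\cdot\|_{L_\infty}$ under almost-everywhere limits, yields the bounds for $u$.

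For the second estimate Theorem \ref{thm: smoothing2} assumes $L_2$ rather than $H^{-1}$ initial data, so a preparatory step is required. The variational $H^{-1}$-energy identity for the PME yields $\E\int_0^{\rho/2}\|u(s)\|_{L_{m+1}}^{m+1}\,ds \lesssim \E(\|\xi\|_{H^{-1}}^2 + \text{forcing})$; by a mean-value argument on $[0,\rho/2]$ one selects a random time $s_0$ at which $\|u(s_0)\|_{L_2}$ (controlled on the bounded domain $Q$ by $\|u(s_0)\|_{L_{m+1}}$) is bounded in expectation by a power of $\rho^{-1}$ times the right-hand side. Restarting the smoothing estimate at $s_0$ then produces the required $\rho^{-\tilde{\theta}}$ rate on $[\rho, T]$, possibly with a larger exponent than the one from the abstract theorem. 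For the first estimate, $\|u\|_{L_\infty(Q_T)}$ coincides with the essential supremum over $(0,T)\times Q$ (since $\{0\}\times Q$ is Lebesgue-null in $Q_T$), so combining the smoothing estimate at $\rho=T/2$ with the $L_\infty$-propagation of Theorem \ref{thm: quasilinear nd} on $[T/2,T]$ reduces matters to an analogous $L_2$-to-$L_\infty$ control on $(0,T/2)$, handled similarly.

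The principal obstacle is the limit passage $\varepsilon \downarrow 0$: typically only weak or distributional convergence of $u_\varepsilon$ to $u$ is available, so transferring a pathwise $L_\infty$ bound requires extracting an almost-everywhere convergent subsequence on $\Omega \times Q_T$ and invoking Fatou, using the lower semicontinuity of $\|\cdot\|_{L_\infty}$. A secondary technical point is that the Stratonovich--It\^o correction $\tfrac{1}{2}\sigma_t^2 \Delta u$ must be absorbed into the diffusion matrix \emph{without} using its strict positivity, since the whole purpose of the construction is to keep the bounds uniform in the ellipticity constant.
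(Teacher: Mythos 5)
Your overall strategy — cast the SPME in the quasilinear form, regularize with $\varepsilon\Delta$, invoke the non-degenerate theorems whose constants are ellipticity-independent, and pass to the limit — is exactly the paper's strategy (cf.\ Theorems \ref{thm: theorem boundedness} and \ref{thm: smoothing PME}). However, there are two genuine gaps.

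First, your treatment of the first estimate is circular. You propose to split $Q_T$ at $T/2$, cover $(T/2,T)\times Q$ by the smoothing estimate, and then control $(0,T/2)\times Q$ ``handled similarly'' — but the smoothing bound blows up like $\rho^{-\tilde\theta}$ as $\rho\searrow 0$, and the propagation estimate (Theorem \ref{thm: quasilinear nd}) requires $L_\infty$ initial data, not $L_2$. There is no finite recursion that closes. In fact the paper's Theorem \ref{thm: theorem boundedness} assumes $\xi\in L_\infty(Q)$ on the right-hand side; the $\|\xi\|_{L_2(Q)}^2$ in the introduction's restatement is a typo, and a global $L_\infty(Q_T)$ bound with $L_2$ initial data cannot hold (near $t=0$ the solution is not bounded if $\xi\notin L_\infty$). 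You should apply Theorem \ref{thm: quasilinear nd} directly to $u^\varepsilon$ with $V^1=|f|$, $V^2=|g|_{l_2}$ and then pass to the limit — no splitting is needed.

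Second, the limit passage $\varepsilon\downarrow 0$ as you describe it is not available. Monotone-operator theory gives $u^\varepsilon\to u$ in an $H^{-1}$ sense (the paper proves $\E\int_0^T\|u-u^\varepsilon\|^2_{H^{-1}}\,dt\to 0$), not almost-everywhere convergence on $\Omega\times Q_T$, which is what your Fatou argument for $\|\cdot\|_{L_\infty}$ presupposes. The paper instead extracts a subsequence for which $\|u^{\varepsilon_k}_t-u_t\|_{H^{-1}}\to 0$ for a.e.\ $(\omega,t)$, and then uses Lemma \ref{lem: lsc} — lower semicontinuity of $\|\cdot\|_{L_p}$ under weak $H^{-1}$ convergence — together with Fatou, passing through $L_p(Q_T)$ for finite $p$ and only at the very end letting $p\to\infty$. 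Without a step of this type your argument does not go through, because no pointwise convergence is at hand.

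For the second estimate, your mean-value/restarting scheme for reducing $H^{-1}$ to $L_2$ data is a plausible alternative route, though it introduces a restarting time and a worse exponent. The paper avoids this by not using Theorem \ref{thm: smoothing2} at all: it invokes the intermediate Lemma \ref{thm: smoothing}, whose right-hand side carries the \emph{space-time} norm $\|u\|_{L_2(Q_T)}$ rather than $\|\xi\|_{L_2(Q)}$, and then bounds $\E\|u^\varepsilon\|_{L_{m+1}(Q_T)}^{m+1}$ (hence $\E\|u^\varepsilon\|^2_{L_2(Q_T)}$, since $m+1>2$) directly via the $H^{-1}$ energy identity and coercivity — no restart, no random time.

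One further small point: to apply the non-degenerate results you must satisfy Assumption \ref{as:boundednessV}, i.e.\ $V^1,V^2$ must be bounded. The paper handles this in two steps — first under bounded $\xi,f,g$, then by truncating and passing to the limit using the $H^{-1}$-contraction together with Lemma \ref{lem: lsc}. Your proposal does not address this approximation layer.
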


 We restrict to affine operators in the noise in \eqref{eq: PME stratonovich} for the sole reason that no complete well-posedness theory in $L_p$ spaces of \eqref{eq: PME stratonovich} with non-linear noise is yet available. We emphasize that this linear structure is \textit{not} required in the derivation of the a priori bounds established in this work. Concerning the well-posedness for nonlinear noise we also refer the reader to \cite{BEN2} for a well-posedness theory of such equations in a kinetic framework. 
 
In the following we will briefly comment on existing literature on the regularity of solutions to stochastic porous media equations. The existence of strong solutions (i.e.\ $|u|^{m-1}u \in L^2((0,T);H^1_0$) has been shown in \cite{BEN1} under the assumption that the operators in the noise are bounded and Lipschitz continuous and under the assumption that $\xi \in L_{m+1}$. In the case of linear multiplicative noise (and $\sigma=0$) \eqref{eq: PME stratonovich} can be transformed into a PDE with random coefficients. Based on this, the H\"older-continuity and boundedness of solutions has been shown in \cite{G13-2,BAR}.

Concerning the regularity theory for deterministic singular and degenerate quasilinear equations we refer to \cite{CAF,DIB2,SACKS} (see also the monographs \cite{DIB,VAS} and the references therein). The regularity of solutions to non-degenerate SPDE has been addressed in \cite{DENIS,HOF,KOMA,KOMA2,WANG, MATE}. For general background on SPDE and
stochastic evolution equations we refer to \cite{KR,MR,BAR2,PARDOUX}.

\subsection{Notation}
Let us introduce some notation that will be used throughout this paper. Let $T$ be a positive real number. Let $(\Omega, \mathcal{F}, \mathbb{F}, \bP)$ be a filtered probability space, where  the filtration $\mathbb{F}=( \mathcal{F}_t)_{t \in [0,T]}$ is right continuous and $\mathcal{F}_0$ contains all $\bP$-null sets. We assume that on $\Omega$ we are given a sequence of independent one-dimensional $\mathbb{F}$ -Wiener processes $(\beta^k_t)_{k=1}^\infty$. The predictable $\sigma$-field on $\Omega_T:= \Omega \times [0,T]$ will be denoted by $\mathcal{P}$.  Let $Q \subset \bR^d$ be a bounded open domain. For $t \in [0,T]$ we set $Q_t= [0,t] \times Q$. The norm in $L_p(Q)$ will be denoted by $\|\cdot\|_{L_p}$. We denote by $H^1_0$  the completion of $C^\infty_c(Q)$ under the norm 
$$
\|u\|_{H^1_0}^2 := \int_Q |\nabla u |^2 \, dx 
$$
 and by $H^{-1}$ the dual of $H^1_0(Q)$.  
 For $q \in [1, \infty)$, we denote by $\mathbb{H}_q^{-1}$ the set of all $H^{-1}$-valued,  $\mathbb{F}$-adapted, continuous processes $u$, such that $u \in L_q(\Omega_T, \mathcal{P}; L_q(Q))$.  Similarly, we denote by $\mathbb{L}_2$ the set of all $L_2(Q)$-valued,  $\mathbb{F}$-adapted, continuous processes $u$, such that $u \in L_2(\Omega_T, \mathcal{P}; H^1_0(Q))$. We will write $(\cdot, \cdot)_H$ for the inner product in a Hilbert space $H$.  
For $m \geq 1$, we will consider the Gel'fand triple 
$$
L_{m+1}(Q) \hookrightarrow H^{-1}\hookrightarrow (L_{m+1}(Q))^*.
$$
 The duality pairing  between $L_{m+1}(Q)$ and  $(L_{m+1}(Q))^*$ will be denoted by 
 ${}_{L_{m+1}^*}\langle \cdot, \cdot \rangle_{L_{m+1}}$. Notice that this duality is defined by means of the inner product in $H^{-1}$. Consequently,  for $u,v \in C^\infty_c(Q)$ 
 $$
 {}_{L_{m+1}^*}\langle u, v \rangle_{L_{m+1}} =(u,v)_{H^{-1}}= (u, (-\Delta)^{-1}v)_{L_2(Q)} \neq \int_Q u v \, dx.
 $$
 For more details we refer to \cite[pp. 68-70]{MR}.
  We will use the summation convention with respect to integer valued repeated indices. Moreover, when no confusion arises, we suppress the $(t,x)$-dependence of the functions for notational convenience.
 
The article is organized in two sections. In Section 2 we  prove our results for the non-degenerate equation. In Section 3, we verify the well-posedness of the degenerate equation, and we approximate the solution by the method of the vanishing viscosity, and by using the estimates of the previous section we pass to the limit.

\section{Non-Degenerate Quasilinear SPDE}

As already mentioned in the introduction, in order to obtain the desired estimates for equation \eqref{eq: PME stratonovich} we first study a class of non-degenerate SPDEs. More precisely,
we consider SPDEs of the form 
\begin{align}                   
\nonumber
du &=\left[ \D_i \left( a^{ij}_t(x,u)\D_j u  +F^i_t(x,u)\right)+ F_t(x,u)  \right] \, dt 
\\
\label{eq: nd quasilinear}
& + \left[  \D_i \big( g^{ik}_t(x,u) \big)+G^k_t(x, u) \right] \, d \beta^k_t,
\\
\label{eq: initial nd quasilinear}
u_0&= \xi,
\end{align}
for $(t,x) \in [0,T] \times Q$, with zero Dirichlet conditions on $\D Q$.

\begin{assumption}                             
 \label{as: nd}
$$
$$
\vspace{-1cm}
\begin{enumerate}[(i)]
\item The functions $a^{ij}, F^i, F : \Omega_T \times Q \times \bR \to \bR$ are $\mathcal{P}\otimes \mathcal{B}(Q)\otimes \mathcal{B}(\bR) $-measurable.

\item  The functions $g^i, G : \Omega_T \times Q \times \bR \to l_2$ are $\mathcal{P}\otimes \mathcal{B}(Q)\otimes \mathcal{B}(\bR) $-measurable.

\item There exists constants $c > 0$, $\theta>0$  and $\tilde{m} >0$ such that for all $(\omega,t,x,r)\in \Omega_T \times Q \times \bR$ 
\begin{equation}                \label{eq: ellipticity}
(a^{ij}_t(x,r) - \frac{1}{2}\D_rg_t^{ik}(x,r)\D_rg_t^{jk}(x,r)) \xi^i \xi^j \geq (c  | r|^{\tilde{m}}+\theta) |\xi|^2.
\end{equation}

{\color{black}\item \label{item: regularity gi}For all $(\omega,t) \in \Omega_T$ we have $F^i_t \in C^1(\bar{Q} \times \bR)$, $g_t^i\in C^2( \bar{Q} \times \bR;l_2)$. Moreover, there exist  predictable processes $ V^1, V^2: \Omega_T \to L_2(Q)$, such that  $V^1, V^2 \in  L_2((0,T); L_2(Q))$ almost surely, and a constant $K$ such that for all $(\omega, t, x, r)\in \Omega_T \times Q \times \bR$ 
\begin{align}
\label{eq: growth condition,F}
|F_t(x,r)|+|F^i_t(x,r)|+| \D_i F^i_t(x,r)|& \leq  V^1_t(x)+ K |r|
\\   \label{eq: growth condition}
|G_t(x,r)|_{l_2}+|g^i_t(x,r)|_{l_2}+|\D_ig^i_t(x,r)|_{l_2}+ |\D^2_ig^i_t(x,r)|_{l_2}&\leq V^2_t(x)+ K |r|
\\
 \label{eq: boundedness of coef}
|\D_rg^i_t(x,r)|_{l_2}+|\D_r\D_i g^i_t(x,r)|_{l_2} &\leq K.
\end{align}}

\item \label{item: regularity g} Let $
\mathbb{N}_g := \{k \in \bN : \exists i \in \{1,...,d\}, g^{ik} \not\equiv 0\}$. We assume in addition that for all $(\omega,t)$ we have $(G^k_t)_{k \in \mathbb{N}_g} \in C^1( \bar Q \times \bR; l_2(\mathbb{N}_g))$, and for all $(\omega,t,x,r) \in \Omega_T \times Q\times \bR $ 
{\color{black}\begin{equation}                 \label{eq: one extra derivative}
\sum_{k \in \mathbb{N}_g}  |\D_i G^k_t(x,r)|^2
\leq V^2_t(x)+ K |r|.
\end{equation}}

\item The initial condition $\xi $ is an $\mathcal{F}_0$-measurable $L_2(Q)$-valued random variable.
\end{enumerate}
\end{assumption}

{\color{black}\begin{assumption}            \label{as:boundednessV}
There exists a constant $N$ such that for all $(\omega, t, x) \in  \Omega_T \times Q$ we have 
\begin{equs}         \label{eq:boundednessV}
|V^1_t(x)|+|V^2_t(x)| \leq N.
\end{equs}
\end{assumption}}
{\color{black}\begin{remark}
Assumption \ref{as:boundednessV} is purely technical in the sense that  our estimates do not depend on the bound of $V^1$ and $V^2$. As it will be seen in the next section, Assumption 2.2 can be removed provided that one has solvability of the equation and some stability properties with respect to the ``free-terms". 
\end{remark}}

\begin{definition}                    \label{def: definition L2}
A function $u \in \mathbb{L}_2$ will be called a solution of \eqref{eq: nd quasilinear}-\eqref{eq: initial nd quasilinear} if 
\begin{enumerate}[(i)]
\item For all $i,j \in \{1,...,d\}$, almost surely  
$$
 \int_0^T \| a^{ij}(u) \D_iu \|^2_{L_2} \, dt < \infty.
$$
\item For all $\phi \in H^1_0$, almost surely, for all $t \in [0,T]$ 
\begin{align*}
(u_t, \phi )_{L_2} =(\xi , \phi )_{L_2} & + \int_0^t\left[ \left( F(u), \phi \right)_{L_2} - \left( a^{ij}(u) \D_j u  +F^i (u), \D_i \phi  \right)_{L_2} \right]  dt
\\
&+ \int_0^t \left[ \big( G^k(u), \phi \big)_{L_2}-\big( g^{ik}(u), \D_i \phi \big)_{L_2} \right] \, d \beta^k_t,
\end{align*}
\end{enumerate}
\end{definition}

 We first present a collection of lemmas that will be used in the proofs of the main theorems. 
The following can be found in \cite{MY} (see Proposition IV.4.7 and Exercise IV.4.31/1).
\begin{lemma}\label{lem: Revuz-Yor}

Let $X$ be a non-negative, adapted, right-continuous process, and let $Y$ be a non-decreasing, adapted, continuous process such that
$$
\E (X_{\tau}| \mathscr{F}_0 )\leq \E (Y_{\tau}| \mathscr{F}_0)
$$
for any bounded stopping time $\tau \leq T$. Then for any $\sigma\in(0,1)$
$$
\E \sup_{t\leq T}X_t^{\sigma}\leq \sigma^{-\sigma}(1-\sigma)^{-1}\E Y_T^{\sigma}.
$$

\end{lemma}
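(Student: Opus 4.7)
The plan is to follow the classical Lenglart-type route: first establish a maximal tail bound for $X^*:=\sup_{t\leq T}X_t$, then integrate against $\sigma\lambda^{\sigma-1}$ and optimize over a free parameter.

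For the maximal tail bound, for $\lambda,\delta>0$ I would introduce the bounded stopping times
\[
 S_\lambda := \inf\{t\geq 0 : X_t\geq \lambda\}\wedge T,\qquad R_\delta := \inf\{t\geq 0 : Y_t\geq \delta\}\wedge T,
\]
and set $\rho := S_\lambda\wedge R_\delta\leq T$. Continuity and monotonicity of $Y$ give $Y_{R_\delta}=Y_T\wedge \delta$, while right-continuity of $X$ yields $X_{S_\lambda}\geq \lambda$ on the event $\{X^*\geq\lambda\}$. Taking ordinary expectation in the hypothesis with $\tau=\rho$, and splitting $\{X^*\geq\lambda\}$ according to whether $R_\delta<T$ (in which case $Y_T\geq\delta$) or $R_\delta=T$ (in which case $\rho=S_\lambda$, so $X_\rho\geq\lambda$ on this event), one arrives at the Lenglart-type estimate
\[
 \bP(X^*\geq \lambda)\ \leq\ \tfrac{1}{\lambda}\,\E(Y_T\wedge\delta)\ +\ \bP(Y_T\geq \delta),
\]
where one also uses $Y_\rho\leq Y_{R_\delta}=Y_T\wedge\delta$ and $\{R_\delta<T\}\subseteq\{Y_T\geq\delta\}$.

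For the $L^\sigma$ upgrade, I would use the layer-cake identity $\E(X^*)^\sigma=\sigma\int_0^\infty \lambda^{\sigma-1}\bP(X^*\geq\lambda)\,d\lambda$, insert the tail bound above with the choice $\delta=c\lambda$ for a free parameter $c>0$, and evaluate the two resulting $\lambda$-integrals: a change of variables $\mu=c\lambda$ handles the tail term (yielding $c^{-\sigma}\E Y_T^\sigma$), while Fubini followed by splitting $Y_T\wedge c\lambda$ at $\lambda=Y_T/c$ handles the expectation term (yielding $c^{1-\sigma}(1-\sigma)^{-1}\E Y_T^\sigma$). Altogether,
\[
 \E(X^*)^\sigma \ \leq\ c^{-\sigma}\Bigl(\tfrac{c}{1-\sigma}+1\Bigr)\E Y_T^\sigma.
\]
Minimizing the prefactor as a function of $c>0$ gives the optimum $c=\sigma$ and the sharp constant $\sigma^{-\sigma}(1-\sigma)^{-1}$.

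The only delicate step is the tail bound: one must combine the two stopping times and invoke both the right-continuity of $X$ (to secure $X_{S_\lambda}\geq\lambda$ on the overshoot event) and the continuity of $Y$ (to secure the clean identity $Y_{R_\delta}=Y_T\wedge\delta$). The subsequent integration and one-variable optimization are routine calculus. I remark that although the hypothesis is phrased with conditional expectation given $\mathscr F_0$, only the unconditional version is needed for the stated $L^\sigma$ conclusion, so I would simply integrate out the conditioning.
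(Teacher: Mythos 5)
Your proof is correct and follows the same Lenglart-domination route that the cited Revuz--Yor reference uses (the paper itself gives no proof, only the pointer to Proposition IV.4.7 and Exercise IV.4.31/1). The one cosmetic fix is that the overshoot step via right-continuity requires the open tail $\{X^*>\lambda\}$ rather than $\{X^*\geq\lambda\}$ --- on $\{X^*=\lambda\}$ the level $\lambda$ may never be reached, so $X_{S_\lambda}\geq\lambda$ can fail --- but this changes nothing in the layer-cake integral.
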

The following lemma is well known (see, e.g.,  \cite[p.8, Proposition 3.1]{DIB}). We provide the proof in order to emphasize that the constant $C$ can be chosen 
independent of $\lambda$ for $\lambda \in [1,2]$ (see below).
\begin{lemma} \label{lem: embeding}
There exists a constant $N$ such that for all  $\lambda \in [1,2]$, $s \in [0,T]$  and all $v \in L_\infty((s,T); L_\lambda(Q))\cap L_2((s,T) ; H^1_0(Q))$, we have
\begin{equation}          \label{eq:emb}
\int_s^T \int_Q |v|^q\, dx dt \leq N^q \left( \int_s^T \int_Q |\nabla v|^2 \, dx dt \right) \left( \esssup_{s\leq t \leq T} \int_Q |v|^\lambda \, dx \right)^{2/d},
\end{equation}
where $q=q(\lambda)=2(d+\lambda)/d$.
\end{lemma}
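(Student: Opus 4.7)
The plan is to reduce matters to a pointwise-in-time Gagliardo--Nirenberg interpolation and then integrate in time, separating the essential supremum factor from the Dirichlet energy.

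First, for each fixed $t \in [s,T]$, I would regard $v(t,\cdot) \in H^1_0(Q)$ as an element of $H^1(\bR^d)$ via extension by zero, and apply a Gagliardo--Nirenberg inequality of the form
\begin{equation*}
\|v(t,\cdot)\|_{L_q(Q)} \leq C\,\|\nabla v(t,\cdot)\|_{L_2(Q)}^{\theta}\,\|v(t,\cdot)\|_{L_\lambda(Q)}^{1-\theta}, \qquad \theta = \frac{2}{q},
\end{equation*}
where $q = 2(d+\lambda)/d$. The choice $\theta = 2/q$ is forced so that $q\theta = 2$, matching the power of $\nabla v$ in the target estimate; a scaling check confirms compatibility with the dimensional relation $d/\lambda - d/q = \theta(1 - d/2 + d/\lambda)$, which reduces precisely to the identity $q-2 = 2\lambda/d$.

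Next, I would raise the inequality to the power $q$, use $q-2 = 2\lambda/d$ to rewrite the $L_\lambda$-factor as $\bigl(\int_Q |v(t,\cdot)|^\lambda\, dx\bigr)^{2/d}$, and obtain
\begin{equation*}
\int_Q |v(t,\cdot)|^q\, dx \leq C^q \left(\int_Q |\nabla v(t,\cdot)|^2\, dx\right) \left(\int_Q |v(t,\cdot)|^\lambda\, dx\right)^{2/d}.
\end{equation*}
Integrating in $t \in (s,T)$ and pulling the $L_\lambda$-integral out as an essential supremum yields \eqref{eq:emb}, after absorbing $C^q \leq N^q$ by choosing $N \geq \max(1,C)$ (permissible because $q \geq 2$).

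The point that needs attention is uniformity of $C$ with respect to $\lambda \in [1,2]$, as emphasized by the authors. For $d \geq 3$ this is transparent: write the inequality as a H\"older interpolation $\|v\|_{L_q} \leq \|v\|_{L_\lambda}^{1-\theta}\,\|v\|_{L_{2^\ast}}^{\theta}$ with $2^\ast = 2d/(d-2)$ (the identity $1/q=(1-\theta)/\lambda+\theta/2^\ast$ follows immediately from $q=2(d+\lambda)/d$ and $\theta=2/q$), whose constant equals $1$; then apply the Sobolev embedding $H^1_0(Q) \hookrightarrow L_{2^\ast}(Q)$, whose constant depends only on $d$. For $d \in \{1,2\}$ pure H\"older interpolation does not suffice (the corresponding auxiliary exponent $p$ forces $p=\infty$ or is nonsensical), and one must instead invoke the full Gagliardo--Nirenberg inequality on $\bR^d$; its constant depends continuously on the interpolation exponents and is hence bounded uniformly on the compact range $\lambda \in [1,2]$. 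This low-dimensional case is the only mild subtlety, and it does not interact with the parabolic or stochastic structure of the problem.
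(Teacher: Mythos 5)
Your proof is correct and takes essentially the same route as the paper: apply a Gagliardo--Nirenberg inequality pointwise in $t$ with $\theta = 2/q$ so that $q\theta = 2$, raise to the $q$-th power, integrate in time, and pull the $L_\lambda$-factor out as an essential supremum. The only difference is in how the uniformity of the constant over $\lambda \in [1,2]$ is argued: the paper quotes the Gagliardo--Nirenberg inequality from Ladyzhenskaya--Solonnikov--Ural'ceva together with an explicit formula for the constant $N(\lambda)$ (covering all dimensions in one stroke by inspection), whereas you derive a $\lambda$-independent constant for $d\ge 3$ via the H\"older interpolation $1/q = (1-\theta)/\lambda + \theta/2^\ast$ composed with the Sobolev embedding $H^1_0 \hookrightarrow L_{2^\ast}$, and for $d\in\{1,2\}$ appeal to continuity of the Gagliardo--Nirenberg constant over the compact parameter range. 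Both are legitimate; your $d\geq 3$ argument is self-contained and perhaps more transparent, while the paper's citation is more uniform across dimensions and makes the dependence explicit.
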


\begin{proof}
By the Gagliardo-Nirenberg inequality (see \cite[p.62, Theorem 2.2]{LAD}) we have (notice that $d(2-\lambda)+2\lambda>0$) for a.e. $t \in (0,T)$
$$
\|v_t\|_{L_q} \leq N(\lambda)\| \nabla v_t\|^{2/q}_{L_2} \|v_t\|^{(q-2)/q}_{L_\lambda}
$$
where 
$$
N(\lambda):= \left( I_{1=d}\frac{1+\lambda}{\lambda}+I_{d=2}\max\left\lbrace\frac{q(d-1)}{d}, \frac{\lambda+2}{2} \right\rbrace+I_{d>2} \frac{2(d-1)}{d-2}     \right)^{2/q}.
$$
Since $C:=\sup_{\lambda \in [1,2]} N(\lambda) < \infty$, the result follows by taking the $q$-th power in the inequality above and integrating over $(0,T)$.  
\end{proof}

Next is It\^o's formula for the $p$-th power of the $L_p$ norm. It can be proved as \cite[Lemma 2]{KOMA}
with the help of a localization argument.

\begin{lemma}                \label{lem: ito formula}
Let Assumption \ref{as: nd} hold and let $u$ be a solution of \eqref{eq: nd quasilinear}. Moreover, suppose that for some $p \geq 2$ and some $s \in [0,T)$, almost surely 
{\color{black}$$
\| u_s\|^p_{L_p}+ \int_s^T \left( \| V^1\|_{L_p}^p+  \| V^2\|_{L_p}^p \, \right) dt < \infty.
$$}
Then, almost surely 
\begin{equation}                      \label{eq: finite energy}
\sup_{s \leq t \leq T} \|u_t\|_{L_p}^p+   \int_s^T \int_Q (|u|^{p-2}+|u|^{\tilde{m}+p-2}) | \nabla u|^2 \, dx dt  < \infty.
\end{equation}
Moreover, almost surely 
\begin{align}
\nonumber
&\| u_t\|_{L_p}^p= \|u_s\|_{L_p}^p+ \int_s^t \int_Q \left( A_{ij}(x,u)\D_j u  +F^i(x,u)\right) p(1-p)|u|^{p-2} \D_i u  \,dxdz 
\\
\nonumber
&+ \int_s^t \int_Q \left(\frac{1}{2} p(p-1)| \D_i(g^i(x,u))+G(x,u)|_{l_2}^2|u|^{p-2}+ p F(x,u)u |u|^{p-2} \right)  dx  dz
\\
\label{eq: Ito formula}
&+ \int_s^t\int_Q \left(  p(1-p) g^{ik}(x,u) |u|^{p-2}  \D_iu+pG^k(x,u) u |u|^{p-2} \right) \,dx d\beta^k_z,
\end{align}
for all $t \in [s,T]$.
\end{lemma}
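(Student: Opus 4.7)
The plan is to follow the strategy of \cite[Lemma 2]{KOMA}, adapted to the quasilinear divergence structure of \eqref{eq: nd quasilinear}. The formal computation is the chain rule applied to $\varphi(r)=|r|^p/p$: the pairing against $du$ gives the $F$ and $F^i$ drift terms (after integrating by parts and using the zero Dirichlet condition), the quadratic variation $|\partial_i g^{i}(u)+G(u)|_{l_2}^2$ produces the second line of \eqref{eq: Ito formula}, and the stochastic integral against $du$ yields the last line. The issue is that this computation is not directly licensed, since a priori $u$ only lies in $\mathbb{L}_2$ and $|u|^{p-2}u$ is not a legal test function in Definition \ref{def: definition L2}.

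First I would remove the integrability issue by a cut-off at the level of the unknown. Introduce a $C^2$ concave approximation $\varphi_n$ of $r\mapsto |r|^p/p$ with $\varphi_n''$ bounded, $\varphi_n(r)=|r|^p/p$ on $\{|r|\le n\}$, and $\varphi_n,\varphi_n',\varphi_n''$ controlled uniformly in $n$ by polynomials in $|r|$ of degrees $p,p-1,p-2$ respectively. Define the stopping times
\begin{equation*}
\tau_n := \inf\Bigl\{ t\ge s : \|u_t\|_{L_2}^2 + \int_s^t \|\nabla u\|_{L_2}^2\, dz \ge n\Bigr\}\wedge T,
\end{equation*}
so that on $[s,\tau_n]$ everything we need to manipulate is bounded in spacetime in an $L_2$-sense; Assumption \ref{as:boundednessV} will then make the polynomial upper bounds on $F,F^i,g^i,G$ genuinely bounded on this interval. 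The goal becomes to establish \eqref{eq: Ito formula} with $\varphi_n$ in place of $|r|^p/p$ up to time $\tau_n$, then send $n\to\infty$.

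To get the It\^o identity at the $\varphi_n$ level, I would mollify the equation in space by a standard Friedrichs mollifier $\zeta_\varepsilon$, working with $u^\varepsilon:=u*\zeta_\varepsilon$ (after extending $u$ by zero outside $Q$). For each fixed $x$, $u^\varepsilon_t(x)$ is an It\^o process in $t$ whose coefficients are smooth in $x$, so the classical finite-dimensional It\^o formula applies to $\varphi_n(u^\varepsilon_t(x))$; integrating over $Q$ and integrating by parts on the divergence-form terms produces the analogue of \eqref{eq: Ito formula} with $u$ replaced by $u^\varepsilon$ and the coefficients replaced by their convolutions with $\zeta_\varepsilon$. Passing $\varepsilon\downarrow 0$ is the main technical step: one must show that the commutators
\begin{equation*}
\bigl[\zeta_\varepsilon *,\, a^{ij}(\cdot,u)\bigr]\partial_j u,\qquad \bigl[\zeta_\varepsilon *,\, g^{ik}(\cdot,u)\bigr],
\end{equation*}
and their analogues for the lower-order terms, tend to $0$ in $L_2(Q_{\tau_n})$. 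This is the place where \eqref{eq: boundedness of coef} is essential: the Lipschitz-in-$r$ bound on $\partial_r g^{ik}$ and $\partial_r\partial_i g^{ik}$ gives the DiPerna–Lions/Friedrichs commutator lemma, handling the stochastic integral and its quadratic variation, while \eqref{eq: growth condition,F}--\eqref{eq: one extra derivative} control the deterministic terms. The stochastic integrals are handled uniformly in $\varepsilon$ by the Burkholder–Davis–Gundy inequality applied on $[s,\tau_n]$.

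Once \eqref{eq: Ito formula} is established with $\varphi_n$ in place of $|r|^p/p$ up to $\tau_n$, taking expectations, using Young's inequality to absorb $p(1-p)|u|^{p-2}|\nabla u|^2 a^{ij}\partial_i u\partial_j u$ on the left via \eqref{eq: ellipticity}, and Gronwall's lemma yield an expectation bound on $\sup_{s\le t\le \tau_n}\|u_t\|_{L_p}^p+\int_s^{\tau_n}\int_Q(|u|^{p-2}+|u|^{\tilde m+p-2})|\nabla u|^2\,dxdt$ that is independent of $n$, given the hypothesis that $\|u_s\|_{L_p}^p$ and $\|V^1\|_{L_p}^p+\|V^2\|_{L_p}^p$ are integrable. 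Monotone convergence then gives \eqref{eq: finite energy} and shows that $\tau_n\to T$ almost surely; finally one lets $n\to\infty$ in the mollified identity, using dominated convergence on the deterministic integrals (with the dominations just obtained) and BDG plus \eqref{eq: finite energy} on the martingale part, to recover \eqref{eq: Ito formula}. I expect the commutator step in the $\varepsilon$-limit, particularly for the nonlinear noise term $\partial_i g^{ik}(x,u)$, to be the main obstacle, since both the $x$- and $u$-dependence must be handled simultaneously.
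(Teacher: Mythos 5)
Your plan is structurally sensible — truncate $|r|^p$, localize in $\omega$, prove an identity for the truncated functional, pass to the limit — and the final limiting arguments you sketch are the right ones. But the central device, space mollification plus pointwise It\^o, is \emph{not} what the paper does (it cites \cite{KOMA}, which follows the weak-formulation/time-discretisation route of Krylov \cite{KRITO}: one works directly with the pairing of the equation against $\varphi_n'(u_t)\in H^1_0$, never leaving the variational framework), and as written your route has two genuine gaps.

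First, the commutator claim for $a^{ij}$ cannot be justified under Assumption \ref{as: nd}. The coefficient $a^{ij}$ is only assumed to be $\mathcal{P}\otimes\mathcal{B}(Q)\otimes\mathcal{B}(\bR)$-measurable, with a lower bound \eqref{eq: ellipticity} and no upper bound, no continuity, and no modulus of regularity in $x$ or $r$. The Friedrichs/DiPerna--Lions commutator lemma needs the coefficient $a^{ij}(\cdot,u)$ to lie in some $W^{1,q}$ class (or to be Lipschitz, if the other factor is merely $L_2$); here all you know about $a^{ij}(\cdot,u)\partial_j u$ is that it belongs to $L_2(Q_T)$ by Definition \ref{def: definition L2}(i). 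There is no mechanism in your proposal to make $\bigl[\zeta_\varepsilon*,\,a^{ij}(\cdot,u)\bigr]\partial_j u\to 0$. (The fix --- mollify the \emph{flux} $a^{ij}(\cdot,u)\partial_j u\in L_2$ as a single object and never try to pull $a^{ij}$ outside the convolution --- is available, but it is a different argument from the one you wrote, and it makes the ``commutator'' framing moot for this term.)

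Second, the boundary is not addressed. After extending $u$ by zero and convolving, $u^\varepsilon_t(x)$ is an It\^o process \emph{only} for $x$ at distance $>\varepsilon$ from $\D Q$; near the boundary the mollified field does not satisfy any equation, and $u^\varepsilon$ no longer vanishes on $\D Q$, so the integration by parts that is supposed to put the divergence-form drift back in symmetric form produces boundary contributions which you neither write down nor control. Since the target identity \eqref{eq: Ito formula} is a global one on $Q$ that hinges on the Dirichlet condition, this is not a cosmetic issue; you would need to work in compactly contained $Q'\Subset Q$, take an extra limit $Q'\uparrow Q$, and show the boundary contributions vanish --- and it is far from clear this can be done at the level of generality of Assumption \ref{as: nd}. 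In the paper's route this difficulty simply does not arise, because one tests the weak formulation against $\varphi_n'(u_t)\in H^1_0$, and the boundary condition is built in.

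Two smaller points: the truncations $\varphi_n$ of $|r|^p/p$ should be convex, not concave, or the sign of the $\varphi_n''$-term needed to absorb the diffusion via \eqref{eq: ellipticity} is wrong; and the localizing stopping time should control $\|u_t\|_{L_p}^p$ together with $\int_s^t(\|V^1\|_{L_p}^p+\|V^2\|_{L_p}^p)$ (as in the proof of Lemma \ref{lem: right right}), not just the $\mathbb{L}_2$-quantity $\|u_t\|_{L_2}^2+\int\|\nabla u\|^2_{L_2}$, since the hypothesis of the lemma is only almost-sure $L_p$-finiteness of those objects.
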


{\color{black}From now on we fix $\mu \in \Gamma_d:=  [2, \infty]\cap ((d+2)/2, \infty]$, we denote by $\mu'$ its conjugate exponent, that is, $
\frac{1}{\mu}+\frac{1}{\mu'}=1$, 
and we set 
\begin{align*}
\gamma&:= 1+ (2/d),
\\
\bar \gamma & := \gamma / \mu',
\\
\mathfrak{N}&:=\{ l \in [2, \infty) : l=\tilde{m}(1+\bar \gamma+...+\bar \gamma^n)/ \mu', \   n \in \mathbb{N} \},
\\
\kappa & := \sup_{p \in \mathfrak{N}} \max\{2p/(p-1), I_{p \neq 2} 4p/(p-2)\} < \infty.
\end{align*}
Notice that $\bar \gamma >1$. }
\begin{lemma}            \label{lem: right right}
Let Assumptions \ref{as: nd}-\ref{as:boundednessV} hold and let $u $ be a solution of \eqref{eq: nd quasilinear}. Then, for all $p \in \mathfrak{N}$, $q \geq p$, and $\eta \in (0,1)$ we have         
\begin{align}
\nonumber
&\E \left( A_q \vee \left( \sup_{t \leq T} \|u_t\|^p_{L_p(Q)} +\int_0^T \int_Q\left| \nabla |u|^{(\tilde{m}+p)/2} \right|^2 \, dx dt \right) \right)^\eta
\\
\label{eq: right right estimate}
&{\color{black}\leq \frac{\eta^{-\eta}}{1-\eta} (N p^\kappa)^\eta  \E  \left( \left(A_q \vee  \| u\|_{L_{\mu'p}(Q_T)}^p \right)+p^{-p}\left(   \|V^1\|_{L_\mu(Q_T)}^p+ \|V^2\|_{L_{2\mu}(Q_T)}^p\right) \right)^\eta,}
\end{align}
where $A_q=(1+ \|\xi\|_{L_\infty})^q$, and $N$ is a constant depending only on  $\tilde{m}, T, c,K,$ $d, \mu$, and $|Q|$. 
\end{lemma}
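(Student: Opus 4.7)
The plan is to combine Itô's formula for $\|u_t\|_{L_p}^p$ (Lemma~\ref{lem: ito formula}) with the ellipticity of Assumption~\ref{as: nd}(iii) to produce a coercivity-type bound, and then to conclude via Lemma~\ref{lem: Revuz-Yor}.

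I apply \eqref{eq: Ito formula} with $s=0$. The quadratic-variation term $\tfrac{1}{2}p(p-1)|\D_i g^{ik}(u)+G^k(u)|_{l_2}^2|u|^{p-2}$ is expanded so that its gradient piece $\D_r g^{ik}\D_r g^{jk}\D_i u\,\D_j u\,|u|^{p-2}$ combines with the first integral of \eqref{eq: Ito formula}; by \eqref{eq: ellipticity}, this yields the dissipation
\[
-p(p-1)\int_Q (c|u|^{\tilde m}+\theta)|u|^{p-2}|\nabla u|^2\,dx \;\leq\; -c_p \int_Q \bigl|\nabla |u|^{(\tilde m+p)/2}\bigr|^2\,dx,
\]
with $c_p$ of order $p$.

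The remaining deterministic integrand consists of terms that by \eqref{eq: growth condition,F}--\eqref{eq: one extra derivative} are bounded by products of $V^j+K|u|$ with $|\nabla u|$ or with each other, multiplied by $|u|^{p-2}$. I use Young's inequality to absorb every $|\nabla u|$-factor into a small multiple of the dissipation, then Hölder's inequality with exponents $(\mu,\mu')$ (squared for the $V^2$-contributions coming from the Itô correction) to separate $u$ from $V^j$ in the resulting mixed space-time integrals. This produces on the right exactly the combinations $\|V^1\|_{L_\mu(Q_T)}^p$, $\|V^2\|_{L_{2\mu}(Q_T)}^p$ and $\|u\|_{L_{\mu' p}(Q_T)}^p$, the $p^{-p}$ weight on the $V^j$-terms emerging from the Young split.

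The stochastic integral in \eqref{eq: Ito formula} is handled by the Burkholder--Davis--Gundy inequality together with the same growth bounds, with any residual $|\nabla u|$-factor again absorbed into the dissipation. After a standard localization this gives a pathwise bound
\[
\|u_t\|_{L_p}^p + c\int_0^t\!\!\int_Q \bigl|\nabla|u|^{(\tilde m+p)/2}\bigr|^2\,dx\,dz \;\leq\; A_q + Y_t + M_t, \qquad t\in[0,T],
\]
with $Y$ non-decreasing, continuous and adapted, $M$ a local martingale, and $A_q$ dominating $\|\xi\|_{L_p}^p$ (using $q\ge p$ and $|Q|<\infty$). Setting
\[
X_t := A_q\vee\Bigl(\|u_t\|_{L_p}^p + c\int_0^t\!\!\int_Q \bigl|\nabla|u|^{(\tilde m+p)/2}\bigr|^2\,dx\,dz\Bigr),
\]
the hypothesis of Lemma~\ref{lem: Revuz-Yor} is met at every bounded stopping time $\tau\le T$, and its conclusion with $\sigma=\eta$ is exactly the claimed inequality (the prefactor $\eta^{-\eta}(1-\eta)^{-1}$ coming from it directly). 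The main obstacle is the bookkeeping in the second step: calibrating every Young and Hölder split so that $V^1$ lies in $L_\mu$, $V^2$ in $L_{2\mu}$, and $u$ in $L_{\mu' p}$, while keeping the dependence on $p$ polynomial of the prescribed degree $\kappa$ uniformly over $p\in\mathfrak{N}$.
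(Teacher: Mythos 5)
Your high-level architecture matches the paper's: apply the $L_p$-It\^o formula, use the ellipticity \eqref{eq: ellipticity} to extract the dissipation $\int |u|^{\tilde m + p -2}|\nabla u|^2$, bound the martingale via BDG, and conclude with Lemma~\ref{lem: Revuz-Yor} at $\sigma=\eta$. But the central step, ``use Young's inequality to absorb every $|\nabla u|$-factor into a small multiple of the dissipation,'' is the point where the argument fails, and it is precisely where the paper does something fundamentally different.

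Consider the drift term $p(1-p)\int_Q F^i(x,u)|u|^{p-2}\D_i u\,dx$ with $|F^i|\le V^1+K|u|$. If you Young-split the factor $|\nabla u|$ against the dissipation coming from the $\theta|u|^{p-2}|\nabla u|^2$ part of the ellipticity, the constant picks up a factor $\theta^{-1}$, which destroys the uniformity in the ellipticity constant that is the entire point of the lemma (and of the subsequent passage to the degenerate equation). If instead you split against the $c|u|^{\tilde m+p-2}|\nabla u|^2$ part, you are forced to write $(V^1+K|u|)|u|^{p-2}|\nabla u| \le \varepsilon|u|^{\tilde m+p-2}|\nabla u|^2 + C_\varepsilon(V^1+K|u|)^2|u|^{p-\tilde m -2}$, and for $\tilde m > p-2$ (certainly possible, e.g.\ $p=2$) the last term carries a negative power of $|u|$ and is not controlled. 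The stochastic integral is even worse: after BDG the term $\int_Q g^{ik}|u|^{p-2}\D_i u\,dx$ sits inside $\bigl(\int_0^\tau \sum_k(\cdots)^2\,dz\bigr)^{1/2}$, so any surviving $|\nabla u|$ contribution appears squared under a square root and cannot be absorbed by a dissipation that enters linearly in time.

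What the paper does instead is exploit the divergence structure: writing $p(1-p)F^i(x,u)|u|^{p-2}\D_i u = \D_i\bigl(\mathcal R_p(F^i)(x,u)\bigr) - \mathcal R_p(\D_i F^i)(x,u)$ with $\mathcal R_p(f)(x,r)=\int_0^r p(p-1)f(x,s)|s|^{p-2}\,ds$, then invoking $\mathcal R_p(F^i)(\cdot,u)\in W^{1,1}_0(Q)$ (Lemma~\ref{lem:Appendix}) so that the divergence integrates to zero over $Q$. The surviving term $\mathcal R_p(\D_iF^i)$ has \emph{no} $\nabla u$ factor and is estimated by H\"older and Young alone. The same device is applied to the cross term $\D_rg^{ik}\,\D_iu\,(\D_ig^{ik}+G^k)|u|^{p-2}$ in the It\^o correction and, crucially, to $g^{ik}|u|^{p-2}\D_iu$ inside the BDG bound, where $\int_Q\D_i(\mathcal R_p(g^{ik}))\,dx=0$ eliminates the gradient before the square appears. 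Without this antiderivative-plus-integration-by-parts trick (which also requires the $C^1/C^2$ regularity and the extra derivative assumptions \eqref{item: regularity gi}--\eqref{item: regularity g}), the bound you claim does not close. So the proposal has a genuine gap: the idea you replace the divergence trick with does not work.
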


\begin{proof}
We  assume that the right hand side in \eqref{eq: right right estimate} is finite, or else there is nothing to prove. Under this assumption, for each $p \in \mathfrak{N}$ we have the formula \eqref{eq: Ito formula} with $s=0$. We proceed by estimating the terms that appear at the right hand side of \eqref{eq: Ito formula}.
We have 
$$
F^i(x,u)p(1-p)|u|^{p-2} \D_i u= \D_i (\mathcal{R}_p(F^i)(x,u))- \mathcal{R}_p(\D_iF^i)(x,u),
$$
where for a function $f$ we have used the notation 
$$
\mathcal{R}_p(f)(x,r)= \int_0^rp(p-1) f(x,s)|s|^{p-2} \, ds.
$$
Moreover, from \eqref{eq: finite energy}, \eqref{eq: growth condition,F}, the fact that $V^1$  is bounded and the definition of $\mathcal{R}_p(F^i)(x,u)$, it follows   (see Lemma \ref{lem:Appendix}) that $\mathcal{R}_p(F^i)(\cdot,u) \in W^{1,1}_0(Q)$ for a.e.  $(\omega, t)\in \Omega_T$, which in particular implies that 
$$
\int_Q \D_i (\mathcal{R}_p(F^i)(\cdot,u)) \, dx =0.
$$
Moreover, one can see from \eqref{eq: growth condition,F} that 
{\color{black}\begin{align*}
| \mathcal{R}_p(\D_iF^i)(x,r)| &\leq p V^1(x) |r|^{p-1}+K(p-1)|r|^p
\end{align*}}
By H\"older's inequality and Young's inequality, we obtain
{\color{black}\begin{equs}
&  \ \ \ \int_0^t \int_Q | \mathcal{R}_p(\D_iF^i)(x,u)| \, dx ds 
\\
& \leq p \|V^1\|_{L_\mu(Q_t)} \|u\|^{p-1}_{L_{\mu'(p-1)}(Q_t)}+K(p-1) \|u\|_{L_{p}(Q_t)}^p
\\
&\leq N p \|V^1\|_{L_\mu(Q_t)} \|u\|^{p-1}_{L_{\mu'p}(Q_t)}+K(p-1) \|u\|_{L_{\mu'p}(Q_t)}^p
\\
& \leq N p^{-p} \|V^1\|^p_{L_\mu(Q_t)} 
+N(Kp+p^{2p/(p-1)}) \|u\|_{L_{\mu'p}(Q_t)}^p.
\end{equs}}
 Consequently, almost surely, for each $t \in [0,T]$
\begin{equation}    \label{eq: estimate F}             
\int_0^t \int_Q F^i(x,u)p(1-p)|u|^{p-2} \D_i u \, dx ds {\color{black}\leq N p^{-p}\|V^1\|_{L_\mu(Q_t)}^p + Np^\kappa\|u\|_{L_{\mu'p}(Q_t)}^p,}
\end{equation}
where $N$ depends only on $K$ and $|Q|$. We continue with the estimate of the term 
$$
\frac{1}{2} p(p-1) \int_0^t\int_Q | \D_i(g^i(x,u))+G(x,u)|_{l_2}^2 |u|^{p-2}\, dx ds.
$$
Obviously, 
\begin{align*}
& \int_Q | \D_i(g^i(x,u))+G(x,u)|_{l_2}^2 |u|^{p-2} \, dx \\
=&\sum_{k \in \mathbb{N}_g^c}\int_Q|G^k(x,u)|^2|u|^{p-2}\, xd+
 \sum_{k \in \mathbb{N}_g}\int_Q| \D_i(g^{ik}(x,u))+G^k(x,u)|^2 |u|^{p-2} \, dx.
\end{align*}
By the growth condition \eqref{eq: growth condition}, H\"older's inequality and Young's inequality
we have 
\begin{equs}
& \frac{1}{2}p(p-1)\sum_{k \in \mathbb{N}_g^c}\int_0^t \int_Q |G^k(x,u)|^2|u|^{p-2} \, dx ds
\\
 \leq & {\color{black}N  p^{-p}\|V^2\|^p_{L_{2\mu}(Q_t)}+ Np^\kappa \|u\|^p_{L_{\mu'p}(Q_t)}.}
\end{equs}
Moreover, by Assumption \ref{as: nd} \eqref{item: regularity g} we have
\begin{align*}
&\sum_{k \in \mathbb{N}_g}| \D_i(g^{ik}(x,u))+G^k(x,u)|^2
\\
=&\sum_{k \in \mathbb{N}_g}|\D_rg^{ik}(x,u)\D_iu|^2+ \sum_{k \in \mathbb{N}_g}| \D_ig^{ik}(x,u)+G^k(x,u)|^2
\\
+&\sum_{k \in \mathbb{N}_g}2\D_rg^{ik}(x,u)\D_iu(\D_ig^{ik}(x,u)+G^k(x,u)).
\end{align*}
By the growth condition \eqref{eq: growth condition}, H\"older's inequality and Young's inequality we have
\begin{align*}
&\frac{1}{2}p(p-1)\sum_{k \in \mathbb{N}_g}\int_0^t \int_Q | \D_ig^{ik}(x,u)+G^k(x,u)|^2|u|^{p-2} \, dx ds\\
 \leq & {\color{black}N  p^{-p}\|V^2\|^p_{L_{2\mu}(Q_t)}+ Np^\kappa \|u\|^p_{L_{\mu'p}(Q_t)}}.
\end{align*}
Moreover, we have 
\begin{align*}
&p(p-1)\sum_{k\in \mathbb{N}_g } \D_rg^{ik}(x,u)\D_iu(\D_ig^{ik}(x,u)+ G^k(x,u))|u|^{p-2}
\\
&=\D_i\left( \mathcal{R}_p\left( \mathfrak{g}\right) (x,u)\right)-  \mathcal{R}_p\left( \D_i \mathfrak{g}\right) (x,u),
\end{align*}
where 
$$
\mathfrak{g}:= \sum_{k\in \mathbb{N}_g} \D_rg^{ik}( \D_ig^{ik}+G^k).
$$
As before, it follows that  $\mathcal{R}_p\left( \mathfrak{g}\right) (\cdot,u) \in W^{1,1}_0(Q)$, which in turn implies that 
$$
\int_Q \D_i (\mathcal{R}_p\left( \mathfrak{g}\right) (x,u)) \, dx =0.
$$ 
By \eqref{eq: growth condition}, \eqref{eq: boundedness of coef}, and \eqref{eq: one extra derivative},  we have 
$$
\int_0^t \int_Q |\mathcal{R}_p\left( \D_i \mathfrak{g}\right) (x,u)|\, dx ds \leq {\color{black}N  p^{-p}\|V^2\|^p_{L_{2\mu}(Q_t)}+ Np^\kappa \|u\|^p_{L_{\mu'p}(Q_t)}}.
$$

Consequently, almost surely, for all  $t \in [0,T]$ we have 
\begin{align}
\nonumber
&\frac{1}{2} p(p-1) \int_0^t\int_Q | \D_i(g^i(x,u))+G(x,u)|_{l_2}^2 |u|^{p-2}\, dx ds
\\
\nonumber
\leq & \frac{1}{2} p(p-1)\int_0^t \int_Q |\D_rg^i(x,u) \D_iu |^2_{l_2} |u|^{p-2}\, dx ds
\\   \label{eq: estimate quadratic variation}
+ & {\color{black}N  p^{-p}\|V^2\|^p_{L_{2\mu}(Q_t)}+ Np^\kappa \|u\|^p_{L_{\mu'p}(Q_t)}},
\end{align}
where $N$ depends only on $K$ and $|Q|$. In a similar manner one gets 
$$
p \int_Q F(x,u)u |u|^{p-2} \, dx {\color{black}\leq N  p^{-p}\|V^1\|^p_{L_\mu(Q_t)}+ Np^\kappa \|u\|^p_{L_{\mu'p}(Q_t)}.}
$$
Using the above inequality combined with \eqref{eq: estimate F}, \eqref{eq: estimate quadratic variation}, and \eqref{eq: ellipticity} we obtain from \eqref{eq: Ito formula}
\begin{align}
\nonumber
&\| u_t\|_{L_p}^p + cp(p-1)\int_0^t \int_Q|u|^{\tilde{m}+p-2}| \nabla u|^2 \, dx ds 
\\
\label{eq: consequence of Ito's}
\leq & \|\xi\|_{L_p}^p + {\color{black}N \left(p^{-p}\|V^1\|^p_{L_\mu(Q_t)}+p^{-p}\|V^2\|^p_{L_{2\mu}(Q_t)}+ Np^\kappa \|u\|^p_{L_{\mu'p}(Q_t)} \right)} +M_t,
\end{align}
where $M_t$ is the local martingale from \eqref{eq: Ito formula}. For any stopping time $ \tau \leq T$ and  any $B \in \mathcal{F}_0$ we have by the Burkholder-Davis-Gundy inequality 
\begin{align*}
\E \sup_{t \leq \tau} I_B|M_t| & \leq 
N \E I_B \left( \int_0^\tau \sum_k \left( p(1-p)\int_Q g^{ik}(x,u) |u|^{p-2}  \D_iu \, dx \right)^2 \, ds \right)^{1/2}
\\
&+N \E I_B \left( \int_0^\tau \sum_k \left(p\int_Q G^k(x, u)u |u|^{p-2}\, dx \right)^2 \, ds \right)^{1/2}.
\end{align*}
We have 
$$
 p(1-p) g^{ik}(x, u) |u|^{p-2}  \D_iu =\D_i( \mathcal{R}_p ( g^{ik} ) (x,u))- \mathcal{R}_p ( \D_i g^{ik} ) (x,u).
$$
As before, we have $\mathcal{R}_p ( g^{ik} ) (\cdot,u) \in W^{1,1}_0(Q)$, which implies that 
$$
\int_Q \D_i(\mathcal{R}_p ( g^{ik} ) (x,u)) \, dx =0.
$$
Next notice that by Minkowski's integral inequality,  H\"older's inequality, and Young's inequality, we have
\begin{align}
\nonumber
\sum_k \left( \int_Q  \mathcal{R}_p ( \D_i g^{ik} ) (x,u) \, dx \right)^2 &\leq \left( \int_Q | \mathcal{R}_p ( \D_i g^{ik} ) (x,u) |_{l_2} \, dx \right)^2 
\\
\nonumber
&\leq   \left(  \int_Q \int_{-|u|}^{|u|}p(p-1) |\D_ig^i(x,s)|_{l_2} |s|^{p-2} \, ds dx \right)^2
\\
\nonumber
&\leq N \left( 2p  \int_Q {\color{black}|V^2(x)|}|u|^{p-1}+|u|^p \, dx \right)^2
\\
\label{eq:quadratic-estimate}
&\leq N \|u\|_{L_p}^p \left( p^2\int_Q{\color{black} |V^2(x)|^2 }|u|^{p-2} \, dx + p^2 \|u\|^p_{L_p}\right),
 \end{align}
 {\color{black}which implies 
 \begin{equs}  \label{eq: quadratic variation}
 & \int_0^t \sum_k \left( \int_Q  \mathcal{R}_p ( \D_i g^{ik} ) (x,u) \, dx \right)^2  \, ds 
 \\
\leq &  N \sup_{s \leq t}  \|u_s\|_{L_p}^p \left( p^{-p}\|V^2\|^p_{L_{2\mu}(Q_t)}+ p^\kappa \|u\|^p_{L_{\mu'p}(Q_t)} \right).
 \end{equs}}
 Consequently, by Young's inequality we have for any $\varepsilon>0$ 
 \begin{align}
 \nonumber
& N \E I_B \left( \int_0^\tau \sum_k \left( p(1-p)\int_Q g^{ik}(x,u) |u|^{p-2}  \D_iu \, dx \right)^2 \, ds \right)^{1/2} 
 \\
 \nonumber
 &\leq \varepsilon \E I_B \sup_{t \leq \tau}\|u_t\|_{_p}^p + \frac{1}{\varepsilon}N \E I_B  {\color{black}\left( p^{-p}\|V^2I_{[0, \tau]}\|^p_{L_{2\mu}(Q_T)}+ p^\kappa \|uI_{[0, \tau]}\|^p_{L_{\mu'p}(Q_T)} \right) }.
 \end{align}
 In a similar manner, for any $\varepsilon>0$ we get 
 \begin{align*}
&N \E I_B \left( \int_0^\tau \sum_k \left(p\int_Q G^k(x,u) u |u|^{p-2} \, dx \right)^2 \, ds \right)^{1/2}
\\
&\leq \varepsilon \E I_B \sup_{t \leq \tau}\|u_t\|_{_p}^p + \frac{1}{\varepsilon}N \E I_B {\color{black} \left( p^{-p}\|V^2I_{[0, \tau]}\|^p_{L_{2\mu}(Q_T)}+ p^\kappa \|uI_{[0, \tau]}\|^p_{L_{\mu'p}(Q_T)} \right) .}
\end{align*}
Hence, we obtain for any $\varepsilon>0$
\begin{equs}  
\E \sup_{t \leq \tau}I_B| M_t| & \leq  \varepsilon \E\sup_{t \leq \tau}I_B\|u_t\|_{_p}^p
\\
 + \frac{1}{\varepsilon}N \E I_B  & \left(  p^{-p}\|V^2I_{[0, \tau]}\|^p_{L_{2\mu}(Q_T)}+ p^\kappa \|uI_{[0, \tau]}\|^p_{L_{\mu'p}(Q_T)} \right).  \label{eq: estimate martingale}
\end{equs}
By \eqref{eq: consequence of Ito's} we have 
\begin{align}        
\nonumber  
&\E I_B \sup_{t \leq \tau} \| u_t\|_{L_p}^p \leq  \E I_B\|\xi\|_{L_p}^p  +\E I_B \sup_{t \leq \tau}|M_t|
\\
+N &\E I_B  {\color{black}\left( p^{-p}\|V^1I_{[0, \tau]}\|^p_{L_\mu(Q_T)}+ p^{-p}\|V^2I_{[0, \tau]}\|^p_{L_{2\mu}(Q_T)}+ p^\kappa \|uI_{[0, \tau]}\|^p_{L_{\mu'p}(Q_T)} \right).}
\label{eq:sup-u} 
\end{align}
By \eqref{eq: consequence of Ito's} again, 
 after a localization argument we obtain
\begin{align}
\nonumber
&\frac{4cp(p-1)}{(p+\tilde{m})^2}\E I_B \int_0^ \tau \int_Q \left| \nabla |u|^{(p+ \tilde{m})/2} \right|^2 \, dx ds \leq N \E I_B  \|\xi\|_{L_p}^p
\\
&  + {\color{black}N \E I_B  \left( p^{-p}\|V^1I_{[0, \tau]}\|^p_{L_\mu(Q_T)}+ p^{-p}\|V^2I_{[0, \tau]}\|^p_{L_{2\mu}(Q_T)}+ p^\kappa \|uI_{[0, \tau]}\|^p_{L_{\mu'p}(Q_T)} \right)}
\label{eq: estimate deriv of power}
\end{align}
and notice that for all $p \geq 2$
$$
\frac{4cp(p-1)}{(p+\tilde{m})^2} \geq N(\tilde{m}, c),
$$
and therefore it can be dropped from the right hand side of \eqref{eq: estimate deriv of power}.
Let us denote by $\tau_n$ the first exit time of $\|u_t\|^p_{L_p}+ {\color{black}\| V^1 \|^p_{L_\mu(Q_t)} +\| V^2 \|^p_{L_{2\mu}(Q_t)}}$ from $(-n,n)$, and by $C_n:= \{ \|\xi\|_{L_p} \leq n\}$. For an arbitrary $C \in \mathcal{F}_0$ and an arbitrary stopping time $\rho \leq T$, we apply \eqref{eq: estimate martingale} with $\tau= \tau^n \wedge \rho=: \rho_n$ and $B= C \cap C_n=:H_n$, which combined with \eqref{eq:sup-u} gives after rearrangement 
\begin{equs}
&\E\sup_{t \leq \rho_n}I_{H_n}\|u_t\|_{L_p}^p \leq N \E I_{H_n} \| \xi \|_{L_p}^p 
\\
 + N &{\color{black}\E I_{H_n}  \left( p^{-p}\|V^1I_{[0, \rho_n]}\|^p_{L_\mu(Q_T)}+ p^{-p}\|V^2I_{[0, \rho_n]}\|^p_{L_{2\mu}(Q_T)}+ p^\kappa \|uI_{[0, \rho_n]}\|^p_{L_{\mu'p}(Q_T)} \right).}
\end{equs}
By the above inequality and \eqref{eq: estimate deriv of power} (applied with $\tau= \rho_n$, $B=H_n$) one can easily see that for all $q \geq p$ we have 
$$
\E I_C X^{n, q}_\rho \leq \E I_C Y^{n,q}_\rho < \infty,
$$
where 
\begin{align*}
X_t^{n,q}&:=I_{C_n}\left( A_q \vee \left( \sup_{s \leq \tau_n \wedge t} \|u_s\|^p_{L_p} +\int_0^{t \wedge\tau_n} \int_Q\left| \nabla |u|^{(\tilde{m}+p)/2} \right|^2 \, dx ds \right) \right) 
\\
Y_t^{n,q}&:=  N p^\kappa I_{C_n}{\color{black}\left(  \left( A_q \vee\|uI_{[0, t \wedge\tau_n]}\|^p_{L_{\mu'p}(Q_T)}\right) \right. }
\\
& {\color{black}\left.   + p^{-p}\left( \|V^1I_{[0, t \wedge\tau_n]}\|^p_{L_\mu(Q_T)}+ \|V^2I_{[0, t \wedge\tau_n]}\|^p_{L_{2\mu}(Q_T)}\right) \right)}
\end{align*}
and $A_q=(1+ \|\xi\|_{L_\infty})^q$.
By Lemma \ref{lem: Revuz-Yor} we have 
$$
\E \left( X^{n, q}_T \right)^\eta  \leq \frac{\eta^{-\eta}}{1-\eta} \E \left(  Y^{n,q}_T \right)^\eta.
$$
The assertion now follows by letting $n \to  \infty$. 

\end{proof}

\begin{lemma}                  \label{lem: right right local}
Let Assumptions \ref{as: nd}-\ref{as:boundednessV} hold and let u be a solution of \eqref{eq: nd quasilinear}. Let $\rho \in (0,1)$ and set $r_n=\rho(1-2^{-n})$.  Then for all $p \in \mathfrak{N}$, $\eta \in (0,1)$, and $n \in  \mathbb{N}$  we have 
\begin{equs}
& \E\left( \sup_{t \in [r_{n+1},T]}\|u\|^p_{L_p} + \int_{r_{n+1}}^T \int_Q \left|\nabla |u|^{(\tilde{m}
+p)/2} \right|^2 \, dx dt \right)^\eta \leq \Big(N p^\kappa \frac{2^n}{\rho}\Big)^\eta \frac{\eta^{-\eta}}{1-\eta}  
\\
 & \times {\color{black}\E \left(  \| uI_{[r_n, T]}\|_{L_{\mu'p}(Q_T)}^p +p^{-p}\left(   \|V^1\|_{L_\mu(Q_T)}^p+ \|V^2\|_{L_{2\mu}(Q_T)}^p\right)\right) ^\eta,}
 \\
 \label{eq: right right local}
\end{equs}
where $N$ is a constant depending only on  $\tilde{m}, T, c,K,d, \mu$ and $|Q|$. 
\end{lemma}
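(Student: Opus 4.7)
The plan is to adapt the proof of Lemma~\ref{lem: right right}, with the key modification that I apply It\^o's formula (Lemma~\ref{lem: ito formula}) starting from a deterministic time $s\in[r_n,r_{n+1}]$ rather than from $0$, and then average the resulting bound over such $s$. The averaging trades the unwelcome ``initial value'' $\|u_s\|_{L_p}^p$ for a time-averaged quantity which H\"older's inequality controls by $\|uI_{[r_n,T]}\|_{L_{\mu' p}(Q_T)}^p$, and dividing by the length $r_{n+1}-r_n=\rho 2^{-(n+1)}$ of the averaging interval is what produces the prefactor $2^n/\rho$ in the statement.

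Concretely, for each fixed deterministic $s\in[r_n,r_{n+1}]$ I would run Lemma~\ref{lem: ito formula} on $[s,T]$ and repeat verbatim the estimates leading to \eqref{eq: consequence of Ito's} in the proof of Lemma~\ref{lem: right right}, with every $\int_0^t$ replaced by $\int_s^t$. Since $[s,T]\subseteq[r_n,T]$, the surviving $L_{\mu'p}$-norm of $u$ is dominated by $\|uI_{[r_n,T]}\|^p_{L_{\mu' p}(Q_T)}$, while the $V^1,V^2$-norms stay bounded by their norms on $Q_T$. This yields, for $t\in[s,T]$,
\begin{equation*}
\|u_t\|_{L_p}^p+N(\tilde{m},c)\int_s^t\!\int_Q\left|\nabla|u|^{(\tilde{m}+p)/2}\right|^2 dx\, dz\leq \|u_s\|_{L_p}^p+\Psi+(M_t-M_s),
\end{equation*}
with $\Psi:=Np^{-p}\|V^1\|^p_{L_\mu(Q_T)}+Np^{-p}\|V^2\|^p_{L_{2\mu}(Q_T)}+Np^\kappa\|uI_{[r_n,T]}\|^p_{L_{\mu' p}(Q_T)}$.

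Next I would take $\sup_{t\in[s,T]}$ on both sides (not over the smaller $[r_{n+1},T]$), apply BDG to the martingale increment, and bound its quadratic variation exactly as in the computation leading to \eqref{eq: quadratic variation}. The crucial feature is that the quadratic variation bound produces a factor $\sup_{t\in[s,T]}\|u_t\|_{L_p}^p$ which matches the sup-interval on the LHS and so is absorbable via Young's inequality as in Lemma~\ref{lem: right right}. Combined with the standard localization by a stopping time $\tau_n$ (as at the end of the proof of Lemma~\ref{lem: right right}), this delivers, for every stopping time $\rho\leq T$, every $B\in\mathcal{F}_0$, and every $s\in[r_n,r_{n+1}]$,
\begin{equation*}
\E I_B\sup_{t\in[s,\tau_n\wedge\rho]}\|u_t\|^p_{L_p}+\E I_B\int_s^{\tau_n\wedge\rho}\!\int_Q\left|\nabla|u|^{(\tilde{m}+p)/2}\right|^2 dx\, dz\leq N\E I_B\|u_s\|^p_{L_p}+N\E I_B\Psi.
\end{equation*}
Since $[r_{n+1},T]\subseteq[s,T]$ for every $s\in[r_n,r_{n+1}]$, the LHS dominates the $s$-independent quantity whose $\eta$-th moment we wish to bound. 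I would then integrate the above in $s\in[r_n,r_{n+1}]$ and divide by $\rho 2^{-(n+1)}$; H\"older's inequality gives $\int_{r_n}^{r_{n+1}}\|u_s\|^p_{L_p}\, ds\leq N\|uI_{[r_n,T]}\|^p_{L_{\mu' p}(Q_T)}$, producing the desired prefactor $Np^\kappa 2^n/\rho$ (the $Np^\kappa$ contribution from $\Psi$ is absorbed using $\rho\in(0,1)$, so that $p^\kappa\leq p^\kappa\cdot 2^n/\rho$). Finally I would apply Lemma~\ref{lem: Revuz-Yor} to the naturally associated pair of adapted, non-decreasing processes (analogous to $X_t^{n,q}$, $Y_t^{n,q}$ at the end of the proof of Lemma~\ref{lem: right right}) to obtain the claimed $\eta$-th moment bound.

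The main technical point is the order of operations around the martingale estimate: the sup must be taken over the larger interval $[s,T]$ on both sides of the It\^o-based inequality \emph{before} BDG and Young are applied, so that the sup factor from the quadratic variation bound gets absorbed against the matching sup on the LHS rather than against the strictly smaller $\sup_{t\in[r_{n+1},T]}$ that eventually appears in the target estimate.
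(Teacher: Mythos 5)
Your approach is correct in spirit but is genuinely different from the paper's. The paper introduces a deterministic $C^1$ time-cutoff $\psi$ with $\psi\equiv 0$ on $[0,c_n]$, $\psi\equiv 1$ on $[r_{n+1},T]$ and $|\psi'|\leq 2^{n+2}\rho^{-1}$ (with $c_n=\rho(1-\tfrac34 2^{-n})$), applies It\^o's formula to the product $\psi_t\|u_t\|_{L_p}^p$ starting from a single good time $t'\in(r_n,c_n)$ where $\|u_{t'}\|_{L_p}<\infty$ a.s., and thereby replaces the initial-value term by the adapted, nondecreasing process $\int_0^{t}\psi'\|u_s\|_{L_p}^p\,ds$, which is then estimated by $2^{n+2}\rho^{-1}\|uI_{[r_n,t]}\|_{L_p(Q_T)}^p$. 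You instead run the unweighted It\^o formula from every (a.e.) $s\in[r_n,r_{n+1}]$ and average in $s$, which is a well-known alternative to the cutoff in Moser/De Giorgi schemes; both routes produce the same $2^n/\rho$ factor.

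One delicate point that your sketch glosses over concerns the application of Lemma~\ref{lem: Revuz-Yor}: the averaged quantity $\frac{1}{r_{n+1}-r_n}\int_{r_n}^{r_{n+1}}\|u_s\|_{L_p}^p\,ds$ is only $\mathcal{F}_{r_{n+1}}$-measurable, so the process $Y_t$ you feed into Revuz--Yor must be built with some care: you need to keep the truncation at $t\wedge\tau_n$ inside the $L_{\mu'p}$-norm, use that $X_\rho=0$ whenever $\rho<r_{n+1}$, and check (e.g.\ by replacing $\rho$ with $\rho\vee r_{n+1}$) that the resulting bound still takes the form $\E I_B X_\rho\leq \E I_B Y_\rho$ with $Y$ adapted and nondecreasing. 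Likewise the localization $\tau_n$ must be defined from a time where $\|u\|_{L_p}$ is known to be finite (after which the finite-energy estimate of Lemma~\ref{lem: ito formula} propagates finiteness forward), not from $t=0$. The paper's cutoff sidesteps these adaptivity issues because $\int_0^{t\wedge\tau_n}\psi'\|u_s\|_{L_p}^p\,ds$ is manifestly an adapted nondecreasing process, but your averaging variant is salvageable and ultimately equivalent; you simply need to be more careful with the stopping-time bookkeeping than the sketch indicates. Also note that your averaging must be performed \emph{before} raising to the power $\eta$ (as you do): averaging the $\eta$-th moments would go the wrong way by concavity.
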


\begin{proof}
We assume that the right hand side of \eqref{eq: right right local} is finite and we set 
$$
c_n =\rho\left( 1- \frac{3}{4}2^{-n} \right).
$$
There exists a $t' \in (r_n , c_n)$ such that almost surely 
$$
\|u_{t'}\|_{L_p}^p+ {\color{black}\int_{t'}^T( \|V^1_s\|_{L_p}^p+\|V^2_s\|_{L_p}^p) \, ds< \infty}.
$$
 Let $ \psi \in C^1([0, T])$ with $0 \leq \psi \leq 1$, $\psi_t=0$ for $ 0\leq t \leq c_n$, $\psi_t=1$ for $r_{n+1}\leq t \leq T$, and $|\psi'_t| \leq 2^{n+2} \rho^{-1}$. By Lemma \ref{lem: ito formula} we obtain 
\begin{align}
\nonumber
&\psi_t\| u_t\|_{L_p}^p=  p(1-p)\int_0^t \int_Q \psi \left( A_{ij}(u)\D_j u  +F^i(u)\right) |u|^{p-2} \D_i u  \, ds 
\\
\nonumber
&+ \int_0^t \psi \left[ \frac{1}{2} p(p-1)\int_Q| \D_i(g^i(u))+G(u)|_{l_2}^2 |u|^{p-2}\, dx+ p \int_Q F(u)u |u|^{p-2} \, dx \right] \, ds
\\
\nonumber 
&+ \int_0^t  \psi \left[ p(1-p)\int_Q g^{ik}(u) |u|^{p-2}  \D_iu\, dx +p\int_Q G^k(u) u |u|^{p-2}\, dx  \right] \, d\beta^k_s
\\
\label{eq: ito product}
&+\int_0^t \| u_s \|^p_{L_p} \psi' \, ds. 
\end{align}
By using the estimates obtained in the proof of Lemma \ref{lem: right right}, we obtain 
\begin{equs}
\nonumber
&\psi_t\| u_t\|_{L_p}^p + cp(p-1)\int_0^t \int_Q\psi  |u|^{\tilde{m}+p-2}| \nabla u|^2 \, dx ds 
\\
& {\color{black}\leq  N p^{-p} \left( \|V^1\|_{L_\mu(Q_t)}^p + \|V^2\|_{L_{2\mu}(Q_t)}^p\right)+ N  p^\kappa \| \psi^{1/p}   u\|^p_{L_{\mu'p}(Q_t)} }
\\
\label{eq: consequence of Ito's local}
& +\int_0^t  \psi'\|u\|^p_{L_p}  ds +M_t,
\end{equs}
where $M_t$ is the local martingale from \eqref{eq: ito product}. From this, by using arguments almost identical to the ones of the proof of Lemma \ref{lem: right right} one gets
\begin{equs}
\nonumber 
&\E\left( \sup_{t \in [0,T]}\psi_t \|u_t\|^p_{L_p} + \int_0^T \int_Q \psi \left| \nabla |u|^{(\tilde{m}
+p)/2} \right|^2 \, dx dt \right)^\eta  
\\
 \leq & {\color{black}N^\eta \frac{\eta^{-\eta}}{1-\eta}  
\times  \E  \left( p^{-p} \left( \|V^1\|_{L_\mu(Q_T)}^p + \|V^2\|_{L_{2\mu}(Q_T)}^p\right) \right.}
\\
+ &  {\color{black} \left. p^\kappa \|  \psi^{1/p}u\|^p_{L_{\mu'p}(Q_T)}  + \int_0^T \psi' \|u\|_{L_p}^p  \, ds \right)^\eta.}
\end{equs}
Having in mind that {\color{black}$\mu'p>p$} and that $p^\kappa+|\psi'| \leq 8p^\kappa 2^n\rho^{-1}$, the result follows from the properties of $\psi$.  
\end{proof}

\begin{lemma}                                 \label{lem: stochastic Gronwal}
Let Assumption \ref{as: nd} hold and let $u$ be a solution of \eqref{eq: nd quasilinear}. Then for all $p \geq 2$ and all $\alpha>0$, we have 
$$
\E \sup_{t \leq T} \|u_t\|_{L_p}^\alpha \leq N \E\|\xi\|_{L_p}^ \alpha + {\color{black}N \E \left( \int_0^T \|V^1 \|_{L_p}^p + \|V^2 \|_{L_p}^p \, ds \right)^{\alpha/p}},  
$$
where $N$ depends on $\alpha, p, K, T$ and $d$.
\end{lemma}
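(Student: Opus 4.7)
The plan is to apply It\^o's formula from Lemma \ref{lem: ito formula} to $\|u_t\|_{L_p}^p$, produce a pathwise bound $\|u_t\|_{L_p}^p \leq A_t + M_t$ with $A_t$ continuous, adapted and non-decreasing and $M_t$ a local martingale, use BDG plus Gronwall for the case $\alpha=p$, and then recover all other $\alpha$ via Lemma \ref{lem: Revuz-Yor} (for $\alpha<p$) and a higher-moment BDG--Gronwall argument (for $\alpha>p$).

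Assume the right-hand side of the claim is finite (otherwise there is nothing to prove), so Lemma \ref{lem: ito formula} applies with $s=0$. The ellipticity condition \eqref{eq: ellipticity} combines the $a^{ij}\D_j u|u|^{p-2}\D_i u$ drift term in \eqref{eq: Ito formula} with the $|\D_r g^{ik}(u)\D_i u|^2|u|^{p-2}$ piece extracted from $|\D_i g^i(u)+G(u)|_{l_2}^2|u|^{p-2}$ into a nonpositive gradient integral that is discarded. The remaining drift and quadratic-variation terms are handled exactly as in the proof of Lemma \ref{lem: right right}: the $F^i$-piece and the $\mathfrak{g}$-cross-term are written as $\D_i\mathcal{R}_p(\cdot)(u)-\mathcal{R}_p(\D_i\cdot)(u)$, the divergence parts integrate to zero because $\mathcal{R}_p(\cdot)(u)\in W^{1,1}_0(Q)$, and the non-divergence parts are bounded using the growth conditions \eqref{eq: growth condition,F}--\eqref{eq: one extra derivative}; the only difference with Lemma \ref{lem: right right} is that no Sobolev embedding is needed, so H\"older and Young can be applied directly in $L_p(Q)$. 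This yields
\begin{equation*}
\|u_t\|_{L_p}^p \leq \|\xi\|_{L_p}^p + N\int_0^t\bigl(\|u_s\|_{L_p}^p+\|V^1_s\|_{L_p}^p+\|V^2_s\|_{L_p}^p\bigr)ds + M_t,
\end{equation*}
where $N$ depends only on $p,K,T,d,|Q|$ and $M_t$ is the local martingale from \eqref{eq: Ito formula}.

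For $\alpha=p$, BDG together with the bracket estimate from \eqref{eq:quadratic-estimate}--\eqref{eq: quadratic variation} (rewritten in $L_p(Q)$) and Young's inequality gives, for any bounded stopping time $\tau\leq T$,
\begin{equation*}
\E\sup_{t\leq\tau}|M_t| \leq \tfrac{1}{2}\E\sup_{t\leq\tau}\|u_t\|_{L_p}^p + N\E\int_0^\tau\bigl(\|u_s\|_{L_p}^p+\|V^2_s\|_{L_p}^p\bigr)ds.
\end{equation*}
Localizing by the first exit times $\tau_n$ of $\|u\|_{L_p}^p+\int_0^\cdot(\|V^1\|_{L_p}^p+\|V^2\|_{L_p}^p)\,ds$ from $(-n,n)$, absorbing the sup into the left-hand side and applying Gronwall's inequality to $t\mapsto\E\sup_{s\leq t\wedge\tau_n}\|u_s\|_{L_p}^p$, then letting $n\to\infty$ by monotone convergence, proves the case $\alpha=p$. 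For $\alpha\in(0,p)$ I would rerun the same estimates conditionally on $\mathcal{F}_0$ (both BDG and Gronwall carry over to conditional expectations) to obtain $\E(X_\tau\mid\mathcal{F}_0)\leq\E(Y_\tau\mid\mathcal{F}_0)$ for every bounded $\tau\leq T$, where $X_t=\sup_{s\leq t}\|u_s\|_{L_p}^p$ and $Y_t=N\|\xi\|_{L_p}^p+N\int_0^t(\|V^1_s\|_{L_p}^p+\|V^2_s\|_{L_p}^p)\,ds$, and then invoke Lemma \ref{lem: Revuz-Yor} with $\sigma=\alpha/p\in(0,1)$.

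For $\alpha>p$ I would raise the pathwise bound to the $\alpha/p$-th power, apply BDG in $L^{\alpha/p}$-form $\E\sup_t|M_t|^{\alpha/p}\leq C\E[M]_T^{\alpha/(2p)}$, and use the bound $[M]_T\leq N\sup_t\|u_t\|_{L_p}^p\cdot\int_0^T(\|V^2_s\|_{L_p}^p+\|u_s\|_{L_p}^p)\,ds$ (obtained from \eqref{eq:quadratic-estimate} after a Young step on $\|V^2\|_{L_p}^2\|u\|_{L_p}^{p-2}$), together with Cauchy--Schwarz and Young, to absorb the resulting $\sup_t\|u_t\|_{L_p}^\alpha$ factor into the left-hand side. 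Combined with the H\"older estimate $(\int_0^t\|u_s\|_{L_p}^p\,ds)^{\alpha/p}\leq T^{\alpha/p-1}\int_0^t\|u_s\|_{L_p}^\alpha\,ds$ for the cumulative drift, a deterministic Gronwall applied to $t\mapsto\E\sup_{s\leq t\wedge\tau_n}\|u_s\|_{L_p}^\alpha$ closes the case. The main obstacle is exactly this $\alpha>p$ bookkeeping: one has to keep every occurrence of $\sup_t\|u\|_{L_p}^\alpha$ under a strictly sublinear power so that the Young step absorbs it with a constant depending only on $\alpha,p,K,T,d$; the remaining steps are standard.
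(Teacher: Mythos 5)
Your proposal is correct and reproduces the first half of the paper's argument verbatim: the pathwise inequality $\|u_t\|_{L_p}^p\leq\|\xi\|_{L_p}^p+N\int_0^t(\|V^1\|_{L_p}^p+\|V^2\|_{L_p}^p+\|u\|_{L_p}^p)\,ds+M_t$ obtained from Lemma~\ref{lem: ito formula}, and the bracket bound $\langle M\rangle_t\leq N\int_0^t(\|u\|_{L_p}^p\|V^2\|_{L_p}^p+\|u\|_{L_p}^{2p})\,ds$ obtained as in \eqref{eq:quadratic-estimate}. The divergence is in the final step: the paper stops there and invokes Lemma 5.2 of \cite{GGK}, a packaged stochastic Gronwall lemma that converts exactly this pair of inequalities into the moment bound on $\E\sup_t\|u_t\|_{L_p}^\alpha$ for every $\alpha>0$ in one stroke. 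You instead re-prove that lemma, splitting into $\alpha=p$ (BDG plus deterministic Gronwall), $\alpha<p$ (conditional BDG and conditional Gronwall feeding into Lemma~\ref{lem: Revuz-Yor}), and $\alpha>p$ (raising the pathwise bound to $\alpha/p$, applying $L^{\alpha/p}$-BDG, H\"older in time, Young, and deterministic Gronwall). Both routes reach the statement, and yours has the virtue of being self-contained, relying only on tools already present in this paper. It is, however, considerably longer, and you elide some bookkeeping that the citation outsources: for $\alpha<p$ the hypothesis of Lemma~\ref{lem: Revuz-Yor} requires $\E(X_\tau\mid\mathcal{F}_0)\leq\E(Y_\tau\mid\mathcal{F}_0)$ for \emph{every} bounded stopping time $\tau$, so the conditional Gronwall argument has to be run on the stopped process $Z_s=\sup_{r\leq s\wedge\tau}\|u_r\|_{L_p}^p$ for a fixed but arbitrary $\tau$ rather than only at $s=T$, and one also needs to check that $t\mapsto\E(Z_{t\wedge\tau_n}\mid\mathcal{F}_0)$ is, for a.e.\ $\omega$, a measurable and a.s.\ finite function of $t$ to which the pathwise Gronwall lemma applies. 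These points are all fixable and standard; they are precisely the content of Lemma 5.2 in \cite{GGK}.
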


\begin{proof}
We assume that the right hand side is finite. {\color{black}Similarly to  \eqref{eq: consequence of Ito's}, one can show that}
\begin{align}
\nonumber
\| u_t\|_{L_p}^p  \leq  \|\xi\|_{L_p}^p 
+ N {\color{black}\int_0^t \left( \|V^1\|_{L_p}^p+ \|V^2\|_{L_p}^p+ \|u\|^p_{L_p} \right)  ds} +M_t,
\end{align}
where $M_t$ is the local martingale from \eqref{eq: Ito formula}. Moreover, as in the derivation of \eqref{eq:quadratic-estimate}, one can check that 
$$
\langle M\rangle_t \leq N \int_0^t\left(  \|u\|_{L_p}^p{\color{black}\|V^2\|^p_{L_p} }+ \|u\|_{L_p}^{2p} \right) \, ds.
$$
The result then follows from Lemma 5.2 in \cite{GGK}. 

\end{proof}

We are now ready to present our first main result.
\begin{theorem}                        \label{thm: quasilinear nd}
Let Assumptions \ref{as: nd}-\ref{as:boundednessV} hold and let $u \in \mathbb{L}_2$  be a solution of  \eqref{eq: nd quasilinear}-\eqref{eq: initial nd quasilinear}. Then, {\color{black}for all $\mu \in \Gamma_d $}, $\alpha >0$,  we have 
\begin{equation}          \label{eq: estimate nd quasilinear}
\E \|u \|^\alpha _{L_\infty(Q_T)} \ \leq N \E \left( 1+\|\xi \|_{L_\infty(Q)}^\alpha +{\color{black}\|V^1\|_{L_\mu(Q_T)}^\alpha +\|V^2\|_{L_{2\mu}(Q_T)}^\alpha }\right),
\end{equation}
where $N$ is a constant depending only on $\alpha, \tilde{m}, T, c,K,d, \mu$ and $|Q|$. 
\end{theorem}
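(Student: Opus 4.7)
The plan is to perform a Moser-type iteration along the sequence $p_n := \tilde m(1+\bar\gamma+\cdots+\bar\gamma^n)/\mu' \in \mathfrak{N}$, which is engineered so that Sobolev embedding (Lemma \ref{lem: embeding}) produces exactly the exponent $\mu' p_{n+1}$ appearing on the right-hand side of Lemma \ref{lem: right right} at the next step. Writing
\[
X_{p} := \sup_{t \leq T} \|u_t\|_{L_p}^p + \int_0^T\!\!\int_Q \bigl|\nabla |u|^{(\tilde m + p)/2}\bigr|^2 \, dx\, dt,
\]
the aim is to control $\E X_{p_n}^{\eta_n}$ along a sequence $\eta_n \downarrow 0$ chosen so that $p_n \eta_n = \alpha$ for every $n$; since $X_{p_n}^{\alpha/p_n} \geq \sup_t \|u_t\|_{L_{p_n}}^\alpha$ and $\sup_t \|u_t\|_{L_{p_n}}^\alpha \to \|u\|_{L_\infty(Q_T)}^\alpha$ as $n \to \infty$ (using $|Q|<\infty$), Fatou's lemma will then deliver the desired $L_\infty$-bound.

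For the Sobolev step, fix $n$ with $p_n \geq \tilde m$ and apply Lemma \ref{lem: embeding} to $v := |u|^{(\tilde m + p_n)/2}$ with $\lambda = 2p_n/(\tilde m + p_n) \in [1,2]$ and $q(\lambda) = 2(d+\lambda)/d$; a short computation gives $q(\lambda)(\tilde m + p_n)/2 = \tilde m + \gamma p_n = \mu' p_{n+1}$, so that
\[
\|u\|_{L_{\mu' p_{n+1}}(Q_T)}^{p_{n+1}} \leq N\, X_{p_n}^{\bar\gamma}.
\]
Plugging this into Lemma \ref{lem: right right} applied at $p = q = p_{n+1}$ and $\eta = \eta_{n+1}$, using $\bar\gamma \eta_{n+1} \leq \eta_n$ (which follows from $p_{n+1} \geq \bar\gamma p_n$) and $X_{p_n} \geq 1$ so that $X_{p_n}^{\bar\gamma \eta_{n+1}} \leq X_{p_n}^{\eta_n}$, the identity $p_{n+1}^{-p_{n+1}\eta_{n+1}} = p_{n+1}^{-\alpha}$, and sub-additivity $(a+b)^{\eta_{n+1}} \leq a^{\eta_{n+1}} + b^{\eta_{n+1}}$ (as $\eta_{n+1} \in (0,1)$), I obtain the fundamental recursion
\[
M_{n+1} \leq C\,(N p_{n+1}^\kappa)^{\eta_{n+1}} \bigl(M_n + \E(1+\|\xi\|_{L_\infty})^\alpha + p_{n+1}^{-\alpha}\, V_*\bigr),
\]
where $M_n := \E(A_{p_n} \vee X_{p_n})^{\eta_n}$ and $V_* := \E\|V^1\|_{L_\mu(Q_T)}^\alpha + \E\|V^2\|_{L_{2\mu}(Q_T)}^\alpha$.

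Because $p_n \sim \bar\gamma^n$ and $\eta_n = \alpha/p_n$, the series $\sum_n \eta_n \log(N p_n^\kappa) \sim \sum_n n \bar\gamma^{-n}$ converges, so the product $\prod_k (N p_k^\kappa)^{\eta_k}$ stays bounded; likewise $\sum_n p_n^{-\alpha} < \infty$. Unrolling the recursion yields a bound $\limsup_n M_n \leq N'\bigl(M_{n_0} + \E(1 + \|\xi\|_{L_\infty})^\alpha + V_*\bigr)$ for some $N'$ depending only on $\alpha, \tilde m, T, c, K, d, \mu, |Q|$, and Fatou then gives \eqref{eq: estimate nd quasilinear} provided the base quantity $M_{n_0}$ (with $n_0$ the smallest index such that $p_{n_0} \geq \tilde m \vee \alpha$) is itself controlled by the right-hand side. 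For this I apply Lemma \ref{lem: stochastic Gronwal} at $p = 2$---which, because $\mu \geq 2$, yields an $L_2$-energy bound whose right-hand side involves only $\|V^1\|_{L_\mu}$ and $\|V^2\|_{L_{2\mu}}$ after H\"older in space-time---and then bootstrap up to level $p_{n_0}$ by performing finitely many Sobolev + Lemma \ref{lem: right right} steps; Assumption \ref{as:boundednessV} enters only to ensure finiteness of the intermediate quantities so the hypotheses of Lemma \ref{lem: right right} are met, and does not appear in the final constant.

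The main obstacle is the two-layer bookkeeping. In the iteration one must verify that the product of the constants $(N p_k^\kappa)^{\eta_k}$ coming from Lemma \ref{lem: right right} stays bounded, which hinges precisely on the coupling $\eta_n = \alpha/p_n$ together with the exponential growth $p_n \sim \bar\gamma^n$. In the base case one must pass from the $L_{\mu' p_{n_0}}$-norms of $V^1, V^2$ down to the $L_\mu, L_{2\mu}$ norms without the $L_\infty$-bound of $V^i$ entering the final constant---this is precisely the content of the remark following Assumption \ref{as:boundednessV}. Everything else---Fatou, sub-additivity of $x\mapsto x^\eta$, and the identity $\mu' p_{n+1} = \tilde m + \gamma p_n$---is routine bookkeeping once those two points are handled.
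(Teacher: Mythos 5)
Your strategy is the paper's: Moser iteration driven by Lemma~\ref{lem: embeding} (Sobolev step on $|u|^{(\tilde m + p)/2}$) and Lemma~\ref{lem: right right}, with the exponent ladder $p_n\in\mathfrak{N}$, powers shrinking like $\alpha/p_n$, a base estimate from Lemma~\ref{lem: stochastic Gronwal}, and Fatou at the end. The computation $q(\lambda)(\tilde m+p_n)/2 = \tilde m + \gamma p_n = \mu'p_{n+1}$ and the convergence of $\prod_k (Np_k^\kappa)^{\eta_k}$ and $\sum_k p_k^{-\alpha}$ are correct.

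However, there is a genuine gap in the recursion you write down. You split $A_{p_{n+1}}\vee\|u\|^{p_{n+1}}_{L_{\mu'p_{n+1}}}$ by sub-additivity into $A_{p_{n+1}}^{\eta_{n+1}} + \|u\|^{p_{n+1}\eta_{n+1}}$, producing the separate additive term $\E(1+\|\xi\|_{L_\infty})^\alpha$ in
\[
M_{n+1}\le C(Np_{n+1}^\kappa)^{\eta_{n+1}}\bigl(M_n + \E(1+\|\xi\|_{L_\infty})^\alpha + p_{n+1}^{-\alpha}V_*\bigr).
\]
This term carries \emph{no} decaying factor, so the unrolled sum $\sum_j\bigl(\prod_{k\ge j}c_k\bigr)\E(1+\|\xi\|)^\alpha$ diverges (the tail products tend to $1$). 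Equivalently, bounding $M_n+\E(1+\|\xi\|)^\alpha\le 2M_n$ turns $c_k$ into $2c_k$ and $\prod 2c_k=\infty$. The paper avoids this by never splitting the maximum: it iterates on $\Theta_n=\E\bigl(A_\alpha\vee(\int|u|^{p_n})^{\alpha/(\delta\bar\gamma^n)}\bigr)$, keeping $A_\alpha=(1+\|\xi\|_{L_\infty})^\alpha$ inside the $\vee$ on both sides of the recursion, so only the $V$-term acquires a separate (decaying) coefficient $\lambda_n$. Your $M_n=\E(A_{p_n}\vee X_{p_n})^{\eta_n}=\E\bigl(A_\alpha\vee X_{p_n}^{\eta_n}\bigr)$ already has the right structure—you just must not break it. The same fix repairs your unjustified claim ``$X_{p_n}\ge 1$'': you do not need it, since if $X_{p_n}<1$ then $X_{p_n}^{\bar\gamma\eta_{n+1}}<1\le A_\alpha$, so $A_\alpha\vee X_{p_n}^{\bar\gamma\eta_{n+1}}\le A_\alpha\vee X_{p_n}^{\eta_n}$ holds regardless. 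Finally, your base-case sketch (``bootstrap from $L_2$ via finitely many Sobolev steps'') is plausible but vague; the paper's route is cleaner: interpolate $\|u\|_{L_{p_{n_0}}(Q_T)}\le\varepsilon\|u\|_{L_\infty(Q_T)}+N_\varepsilon\|u\|_{L_2(Q_T)}$, absorb the $\varepsilon$-term into the left-hand side, and then invoke Lemma~\ref{lem: stochastic Gronwal}.
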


\begin{proof}
{\color{black}We fix $\alpha>0$, $\mu\in \Gamma_d$, and let $\mu'$ be the conjugate exponent of $\mu$.  Without loss of generality we assume that the right hand side of \eqref{eq: estimate nd quasilinear} is finite.  Recall also the notations $\gamma= 1+(2/d)$, $\bar\gamma= \gamma / \mu'(>1)$ and let $\delta:= \tilde{m}\bar\gamma / (\bar\gamma-1)$. }
By Lemma \ref{lem: embeding}, after raising \eqref{eq:emb}
to the power $\gamma^{-1}$, we obtain by Young's inequality (with exponents $p=\gamma$, $p^*=\gamma/(\gamma-1)$, and note that $ 2/ d(\gamma-1)=1$)
\begin{equation}              \label{eq:C}
\left(\int_s^T \int_Q |v|^q\, dx dt\right)^{1/\gamma} \leq C^{q/\gamma} \left( \int_s^T \int_Q |\nabla v|^2 \, dx dt + \esssup_{s\leq t \leq T} \int_Q |v|^\lambda \, dx \right).
\end{equation}
For $p \geq \tilde{m}$, we apply this inequality with 
 $\lambda= 2p/(\tilde{m}+p) \in [1,2]$, $q=2(1+(\lambda/d))$, $v=|u|^{(\tilde{m}+p)/2}$ (notice that $q(\tilde{m}+p)/2= \tilde{m}+\gamma p{\color{black}= \tilde{m}+\mu'\bar \gamma p}$), and we raise to the power {\color{black}$\alpha (\mu')^{n+1}/ (\delta \gamma^n)$} to conclude that
\begin{align}
\nonumber
&{\color{black}\E \left( A_\alpha \vee  \left( \int_0^T \int_Q |u|^{\tilde{m}+\mu'p \bar \gamma} \, dx dt  \right) ^{\color{black}{\alpha /(\delta\bar\gamma^{n+1})}} \right)} \\
\nonumber
=&\E \left( A_\alpha \vee  \left( \int_0^T \int_Q |u|^{\tilde{m}+p \gamma} \, dx dt  \right) ^{\color{black}{\alpha (\mu')^{n+1} /(\delta\gamma^{n+1})}} \right) \\
\nonumber
\leq&  N^{1/{\color{black}\bar \gamma^n}} \E \left( A_\alpha \vee \left( \sup_{t \leq T} \|u_t\|^p_{L_p} +\int_0^T \int_Q \left| \nabla |u|^{(\tilde{m}+p)/2} \right|^2 \, dx dt \right)^{\color{black}{\alpha (\mu')^{n+1}/ (\delta\gamma^n)}} \right)
\\
\label{eq: right estimate}
\leq & N^{1/{\color{black}\bar \gamma^n}} \E \left( A_{\delta \bar\gamma^n/\mu'} \vee \left( \sup_{t \leq T} \|u_t\|^p_{L_p} +\int_0^T \int_Q \left| \nabla |u|^{(\tilde{m}+p)/2} \right|^2 \, dx dt \right)\right)^{\color{black}{\alpha \mu'/ (\delta\bar \gamma^n)} }
\end{align}
where, recall that for $q \geq 0$, $A_q:=(1+\|\xi\|_{L_\infty})^q$.
Let  
$$
{\color{black}p_n:= \tilde{m}(1+...+\bar \gamma^n)= \frac{\tilde{m}(\bar \gamma^{n+1}-1)}{\bar \gamma-1},}
$$ and 
let $n_0$ be the minimal positive integer such that 
$$
{\color{black}p_{n_0} \geq 2\mu' \ \ \text{and} \ \ \alpha \mu' / (\delta \bar \gamma^{n_0} ) < 1.}
$$
By combining inequality \eqref{eq: right estimate} ({\color{black}with $p=p_n/\mu'$})
with \eqref{eq: right right estimate} (with {\color{black}$\eta= \alpha \mu' \delta^{-1}\bar \gamma^{-n}$}, {\color{black}$q= \delta \bar \gamma^n /\mu' \geq p_n/\mu'$}) we obtain for all $n \geq n_0$
{\color{black}\begin{align}
\nonumber
& \E \left( A_{\alpha} \vee  \left( \int_0^T \int_Q |u|^{p_{n+1}} \, dx dt  \right) ^{\color{black}{\alpha  /(\delta \bar \gamma^{n+1})}} \right) \\
\nonumber
& =\E \left( A_{\alpha} \vee  \left( \int_0^T \int_Q |u|^{\color{black}{\tilde{m}+\bar \gamma p_n}} \, dx dt  \right) ^{\color{black}{\alpha   /(\delta \bar \gamma^{n+1})}} \right)
\\
\nonumber
& \leq c_n    \E  \left(\left(A_{\delta \bar \gamma^n/\mu'} \vee 
\|u\|^{p_n/\mu'}_{L_{p_n}(Q_T)}  \right) \right.
\\ \nonumber
 &\left.
+\left( \frac{p_n}{\mu'}\right)^{-p_n/\mu'} \left(   \|V^1\|_{L_\mu(Q_T)}^{p_n/\mu'}+ \|V^2\|_{L_{2\mu}(Q_T)}^{p_n/\mu'}\right) \right)^{\alpha \mu' /(\delta \bar \gamma^n)}
\\
\nonumber
& \leq c_n  \E   \left(A_{\alpha } \vee  \left(\int_0^T \int_Q |u|^{p_n} \,dxds \right)^{\alpha /(\delta \bar \gamma^n)} \right)
\\
\label{eq: iterable}
&+c_n \left( \frac{p_n}{\mu'}\right)^{-\alpha p_n/(\delta \bar \gamma^n)} \E \left(2+ \|V^1\|_{L_\mu(Q_T)}^\alpha+\|V^2\|_{L_{2\mu}(Q_T)}^\alpha\right), 
\end{align}}
where 
{\color{black}\begin{equation} \label{eq: cn}
c_n:= N^{1/ {\color{black}\bar\gamma^n}} (\delta {\color{black}\bar\gamma^n}/ (\mu'\alpha) )^{\alpha \mu' /(\delta {\color{black}\bar \gamma^n})} \frac{1}{1-(\alpha \mu' /(\delta {\color{black}\bar \gamma^n}))} \left(N \frac{p_n^\kappa}{(\mu')^k}\right)^{\alpha \mu'/(\delta {\color{black}\bar \gamma^n})},
\end{equation}}
$N$ does not depend on $n$, and we have used that $p_n/{\color{black}\bar \gamma^n  }\uparrow \delta$.  Notice that the right hand side of \eqref{eq: iterable} is finite (by the assumption that the right hand side of \eqref{eq: estimate nd quasilinear} is finite, Lemma \ref{lem: stochastic Gronwal} and \eqref{eq:boundednessV}). One can easily see that 
{\color{black}$$
\prod_{n=1}^\infty N^{1/{\color{black}\bar \gamma^n}}< \infty, \qquad \prod_{n=1}^\infty  (\delta{\color{black} \bar \gamma^n}/ (\mu' \alpha))^{\alpha \mu' /( \delta {\color{black}\bar \gamma^n})} < \infty.
$$}
Also,   
${\color{black}p_n \leq \tilde{m}n \bar \gamma^n}$ which implies that 
{\color{black}$$
\prod_{n=1}^\infty  \left(N \frac{p_n^\kappa}{(\mu')^k}\right)^{\alpha \mu' /(\delta {\color{black}\bar \gamma^n})}< \infty.
$$}
Moreover, since $e^{-2x} \leq 1-x$ for all $x$ sufficiently small, we have for some constant $N$ that and all $M \in \mathbb{N}$
{\color{black}$$
\prod_{n=1}^M \frac{1}{1-(\alpha \mu' / (\delta {\color{black}\bar \gamma^n}))} \leq N  e^{\sum_{n=1}^M 2\alpha \mu' / \delta {\color{black}\bar \gamma^n}},
$$}
 which implies that {\color{black}$\prod_{n=1}^\infty \frac{1}{1-(\alpha \mu' / (\delta {\color{black}\bar \gamma^n}))}  < \infty$}.  
Consequently,  there exists an $N \in \bR$ such that for any $M \in \mathbb{N}$ 
\begin{equation}      \label{eq:bound-cn}
 \prod_{n=1}^M c_n \leq N.
\end{equation}
Since $p_n/ {\color{black}\bar \gamma^n }\uparrow \delta$, there exists an $N$ such that for all $n \in \mathbb{N}$ large enough, we have 
 $$
 p_n^{- \alpha p_n /(\delta  {\color{black}\bar \gamma^n})} \leq N ({\color{black}\bar\gamma^n })^{-\alpha p_n/(\delta{\color{black}\bar\gamma^n)}} \leq  N\left({\color{black}\bar\gamma}^{ \alpha /2}\right)^{-n}.
 $$
 Since, $\alpha>0$, $
 {\color{black}\bar \gamma}>1$  we have 
 \begin{equation}                      \label{eq:bound-pn}
 \sum_{n=1}^\infty\left(\frac{ p_n}{{\color{black}\mu'}}\right) ^{-\alpha p_n/ (\delta{\color{black}\bar  \gamma^n})
 }  < \infty.
 \end{equation}
Consequently, by iterating \eqref{eq: iterable} and using \eqref{eq:bound-cn} we obtain 
\begin{equation}            \label{eq:before-limit-iter}
\Theta_m \leq  \left( \prod_{n=n_0}^m c_n\right) \Theta_{n_0} + N\left( \sum_{n=n_0}^m \lambda_n \right)  \E \left(1+ {\color{black}\mathcal{V}_\mu}\right)^\alpha,
\end{equation} 
where 
$$
\Theta_n := \E \left( A_{\alpha} \vee  \left( \int_0^T \int_Q |u|^{p_{n}} \, dx dt  \right) ^{\alpha /(\delta {\color{black}\bar \gamma^{n})} }\right), \qquad \lambda_n: = \left( \frac{p_n}{\mu'} \right) ^{-\alpha p_n/ (\delta {\color{black}{\color{black}\bar \gamma^n}})
 },
$$ 
and 
$$
{\color{black}\mathcal{V}_\mu:= \|V^1\|_{L_\mu(Q_T)}+\|V^2\|_{L_{2\mu}(Q_T)}.}
$$
By virtue of \eqref{eq:bound-cn} and \eqref{eq:bound-pn}, we can let $m \to \infty$ in \eqref{eq:before-limit-iter} and use that $ p_m/(\delta{\color{black} \bar \gamma^m}) \to 1$ to obtain by  Fatou's lemma
 \begin{align*}
 \E \| u\|_{L_\infty(Q_T)}^\alpha & \leq \liminf_{n\to \infty} \E \left( \int_0^T \int_Q |u|^{p_n} \, dx dt  \right) ^{\alpha /(\delta {\color{black}\bar  \gamma^n})} 
 \\
 & {\color{black}\leq  N \E \| u\|_{L_{p_{n_0}}(Q_T)}^{\alpha p_{n_0}/( \delta \bar \gamma^{n_0})} + N \E \left(1+ \|\xi\|_{L_\infty(Q)}^\alpha+|\mathcal{V}_\mu|^\alpha\right).}
 \end{align*}
 {\color{black}Since $p_n / \bar \gamma^n $ is increasing in $n$, we have $p_{n_0} / (\delta \bar \gamma^{n_0}) \leq 1 $ and thus
\begin{equation}        \label{eq: estimate with n0}
\E \| u\|_{L_\infty(Q_T)}^ {\alpha}\leq  N \E \left(1+\| u\|_{L_{p_{n_0}}(Q_T)}^{\alpha}+ \|\xi\|_{L_\infty(Q)}^\alpha+|\mathcal{V}_\mu|^\alpha\right).
\end{equation}}
{\color{black}Notice that by the assumption that the right hand side of \eqref{eq: estimate nd quasilinear} is finite, combined with \eqref{eq:boundednessV} and Lemma \ref{lem: stochastic Gronwal}, we get that the right hand side of \eqref{eq: estimate with n0} is finite.  
By the interpolation inequality
$$
\|u\|_{L_{p_{n_0}}(Q_T)} \leq \varepsilon\|u\|_{L_\infty(Q_T)} +N_\varepsilon \|u\|_{L_2(Q_T)},
$$
we obtain after rearrangement in \eqref{eq: estimate with n0}
$$
\E \| u\|_{L_\infty(Q_T)}^ {\alpha}\leq  N \E \left(1+\| u\|_{L_2(Q_T)}^{\alpha}+ \|\xi\|_{L_\infty(Q)}^\alpha+|\mathcal{V}_\mu|^\alpha\right),
$$
 which again by virtue of Lemma \ref{lem: stochastic Gronwal} gives (since $\mu \geq 2$)
\begin{equation*}       
 \E \| u\|_{L_\infty(Q_T)}^ {\alpha}\leq  N \E \left(1+ \|\xi\|_{L_\infty(Q)}^\alpha+|\mathcal{V}_\mu|^\alpha \right).
\end{equation*}
This finishes the proof.}
\end{proof}

\begin{remark}
In \cite{KOMA}, in the non-degenerate case, mixed $L^t_\nu L^x_\mu$-norms of the free terms appear at the right hand side of the estimates. This is also achievable in our setting provided that one has a mixed-norm version of the embedding Lemma \ref{lem: embeding} (see also \cite[Lemma 1]{KOMA}).
\end{remark}

{\color{black}Next we present the ``regularizing" effect. Recall that $\gamma= 1+(2/d)$, $\bar \gamma= \gamma/\mu'$, $\delta= \tilde{m} \bar \gamma / (\bar \gamma-1)$, and  $p_n = \tilde{m}(1+\bar \gamma+...+ \bar \gamma^n)$. We will need the following two lemmata.}
\begin{lemma}              \label{lem:p0}
Let $\alpha>0$, and let $q:=p_{n_0}$, where $n_0$ is the minimal positive integer such that 
$p_{n_0} \geq 2$ and $\alpha / ( \delta \bar \gamma ^{n_0})< 1$.  Suppose that Assumptions \ref{as: nd}- \ref{as:boundednessV} are satisfied and let $u$ be a solution of \eqref{eq: nd quasilinear}. Then, for all $ \rho \in (0,1)$ we have
\begin{equation}         \label{eq: smoothing}
{\color{black}\E \|u \|^\alpha _{L_\infty((\rho, T) \times Q)} \ \leq \rho^{-\tilde{\theta}}  N \E \left( 1+ \|u \|^\alpha _{L_{q}((r_{n_0},T)\times Q)} + |\mathcal{V}_\mu|^\alpha \right),}
\end{equation}
where 
$$
r_{n_0}=\rho(1-2^{-n_0}), \qquad {\color{black}|\mathcal{V}_\mu|= \|V^1\|_{L_\mu(Q_T)}+\|V^2\|_{L_{2\mu}(Q_T)}}, 
$$
$N$ is a constant depending only on $\alpha, \tilde{m}, T, c,K,d, \mu$, and $|Q|$, and $\tilde{\theta} > 0$ is a constant depending only on $\alpha, d, \mu$ and $\tilde{m}$.
\end{lemma}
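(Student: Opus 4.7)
The plan is to carry out the same Moser-type iteration as in the proof of Theorem \ref{thm: quasilinear nd}, but now on the shrinking time slabs $(r_n,T)$, using the localized energy estimate Lemma \ref{lem: right right local} in place of Lemma \ref{lem: right right}. For $p=p_n/\mu'$ (which is $\geq 2$ as soon as $n\geq n_0$) I apply the Gagliardo--Nirenberg embedding Lemma \ref{lem: embeding} on $(r_{n+1},T)$ with $\lambda=2p/(\tilde m+p)\in[1,2]$ and $v=|u|^{(\tilde m+p)/2}$, exactly as in the proof of Theorem \ref{thm: quasilinear nd}, and raise the resulting inequality to the power $\alpha\mu'/(\delta\bar\gamma^n)$. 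Using $\tilde m+\mu'\bar\gamma p=p_{n+1}$ this yields
\begin{equation*}
\Bigl(\int_{r_{n+1}}^{T}\!\!\int_Q|u|^{p_{n+1}}\,dx\,dt\Bigr)^{\alpha/(\delta\bar\gamma^{n+1})}\leq N^{1/\bar\gamma^n}\Bigl(\sup_{r_{n+1}\leq t\leq T}\|u_t\|_{L_p}^{p}+\int_{r_{n+1}}^{T}\!\!\int_Q|\nabla|u|^{(\tilde m+p)/2}|^2\,dxdt\Bigr)^{\alpha\mu'/(\delta\bar\gamma^n)}.
\end{equation*}

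Taking expectation and applying Lemma \ref{lem: right right local} with $\eta=\eta_n:=\alpha\mu'/(\delta\bar\gamma^n)\in(0,1)$ gives, with the abbreviations
\begin{equation*}
\Theta_n:=\E\|u\|_{L_{p_n}((r_n,T)\times Q)}^{\alpha p_n/(\delta\bar\gamma^n)},\qquad \lambda_n:=(p_n/\mu')^{-\alpha p_n/(\delta\bar\gamma^n)},
\end{equation*}
the recursion
\begin{equation*}
\Theta_{n+1}\leq c_n(2^n/\rho)^{\eta_n}\Theta_n+c_n(2^n/\rho)^{\eta_n}\lambda_n\,\E\bigl(2+\|V^1\|_{L_\mu(Q_T)}^\alpha+\|V^2\|_{L_{2\mu}(Q_T)}^\alpha\bigr),
\end{equation*}
where $c_n$ is exactly the constant that already appears in the iteration of Theorem \ref{thm: quasilinear nd} (see \eqref{eq: cn}); the only new feature is the factor $(2^n/\rho)^{\eta_n}$ produced by Lemma \ref{lem: right right local}.

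Iterating this from $n_0$ to $m$ and using the convergence properties already established in the proof of Theorem \ref{thm: quasilinear nd} ($\prod_n c_n<\infty$ and $\sum_n\lambda_n<\infty$), together with the elementary facts $\sum_n\eta_n<\infty$ and $\sum_n n\,\eta_n<\infty$ (both immediate from the geometric decay $\eta_n=O(\bar\gamma^{-n})$, using $\bar\gamma>1$), I see that $\prod_{n\geq n_0}c_n\,2^{n\eta_n}$ is finite, while all the $\rho^{-\eta_n}$ factors combine into the single power $\rho^{-\tilde\theta}$ with
\begin{equation*}
\tilde\theta:=\sum_{n=n_0}^{\infty}\eta_n=\sum_{n=n_0}^{\infty}\frac{\alpha\mu'}{\delta\bar\gamma^n}<\infty,
\end{equation*}
a quantity depending only on $\alpha,d,\mu,\tilde m$ as required. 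Letting $m\to\infty$, Fatou's lemma and the fact that $\alpha p_m/(\delta\bar\gamma^m)\uparrow\alpha$ turn the left-hand side into $\E\|u\|^\alpha_{L_\infty((\rho,T)\times Q)}$, while $\Theta_{n_0}\leq 1+\E\|u\|^\alpha_{L_q((r_{n_0},T)\times Q)}$ because $\alpha p_{n_0}/(\delta\bar\gamma^{n_0})<\alpha$. This yields \eqref{eq: smoothing}.

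The main delicate point is the bookkeeping of the extra multiplicative factors $(2^n/\rho)^{\eta_n}$: the piece $2^{n\eta_n}=\exp(n\eta_n\log 2)$ must not destroy the absolute convergence of $\prod_n c_n$, which is secured precisely by the geometric decay of $\eta_n$, and the accumulation of the $\rho^{-\eta_n}$'s has to telescope into a single finite power of $\rho^{-1}$, giving the final blow-up rate $\rho^{-\tilde\theta}$.
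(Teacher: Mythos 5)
Your proof follows the same strategy as the paper's: apply the localized energy estimate Lemma~\ref{lem: right right local} on the shrinking slabs $(r_n,T)$, combine with the Gagliardo--Nirenberg embedding Lemma~\ref{lem: embeding} to obtain the recursion with the extra factor $(2^n/\rho)^{\alpha\mu'/(\delta\bar\gamma^n)}$, control the accumulated product exactly as in Theorem~\ref{thm: quasilinear nd}, and let $n\to\infty$. The bookkeeping you highlight --- geometric decay of $\eta_n=\alpha\mu'/(\delta\bar\gamma^n)$ making both $\prod_n 2^{n\eta_n}$ finite and $\sum_n\eta_n=\tilde\theta$ convergent, together with $p_{n_0}/\bar\gamma^{n_0}\le\delta$ giving $\Theta_{n_0}\le 1+\E\|u\|^\alpha_{L_q}$ --- is precisely what the paper does, so this is a correct proof and essentially identical in approach.
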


\begin{proof}
Similarly to the proof of Theorem \ref{thm: quasilinear nd}, by Lemma \ref{lem: embeding} and Lemma \ref{lem: right right local}, we have for all $n \geq n_0$
{\color{black}\begin{align}
\nonumber
& \E \left(  \int_{r_{n+1}}^T \int_Q |u|^{p_{n+1}} \, dx dt  \right) ^{\alpha/(\delta \bar \gamma^{n+1})}
\\
\nonumber
&\leq (\rho^{-1}2^n)^{\mu' \alpha/(\delta \bar \gamma ^n)} c_n  \E   \left(\int_{r_n}^T \int_Q |u|^{p_n} \, ds \right)^{\alpha/(\delta \bar \gamma^n)} 
\\
\label{eq: iterable local}
&+(\rho^{-1}2^n)^{\alpha \mu'/(\delta \bar \gamma ^n)} c_n \left( \frac{p_n}{\mu'} \right) ^{-\alpha p_n/(\delta \bar \gamma^n)} \E \left(2+\|V^1\|_{L_\mu(Q_T)}^\alpha+\|V^2\|_{L_{2\mu}(Q_T)}^\alpha \right), 
\end{align}}
where 
$c_n$ is given in \eqref{eq: cn}. Under the assumption that the right hand side of \eqref{eq: smoothing} is finite, it follows that the right hand side of the above inequality it is also finite for $n=n_0$, and by the same inequality and induction it follows that is finite for all $n \geq n_0$. Also notice that for all $M \in \mathbb{N}$ 
$$
\prod_{n=n_0}^M (\rho^{-1}2^n)^{\color{black}{\alpha \mu'/(\delta \bar \gamma ^n )}} \leq N \rho^{-\tilde{\theta}},
$$
with $\tilde{\theta}= (\alpha {\color{black}\mu'} / \delta) \sum_{n}{\color{black}\bar \gamma}^{-n}$.
Consequently, by iterating \eqref{eq: iterable local} and passing to the limit as $n \to \infty$ we obtain
\begin{align*}
\E \|u \|^\alpha _{L_\infty((\rho, T) \times Q)} \ & \leq \rho^{-\tilde{\theta}} N \E \left(\int_{r_{n_0}}^T \int_Q |u|^{p_{n_0}} \, ds \right)^{\sfrac{\alpha}{\delta {\color{black}\bar \gamma^{n_0}}}}\\
&+ \rho^{-\tilde{\theta}} N \left(1+ {\color{black}| \mathcal{V}_\mu|^\alpha}\right)
\\ 
&\leq \rho^{-\tilde{\theta}} N \E \left( 1+ \|u \|^\alpha _{L_{p_{n_0}}((r_{n_0},T)\times Q)} + {\color{black}| \mathcal{V}_\mu|^\alpha }\right),
\end{align*}
where we have used that {\color{black}$p_{n_0} \leq \delta \bar \gamma^{n_0}$. }
\end{proof}

\begin{lemma}       \label{thm: smoothing}
Suppose that Assumptions \ref{as: nd}-\eqref{as:boundednessV} are satisfied, let $\alpha >0$,  and let $u \in \mathbb{L}_2$  be a solution of  \eqref{eq: nd quasilinear}-\eqref{eq: initial nd quasilinear}. Then, for all  $\rho \in (0,1)$ we have 
\begin{align}          \label{eq:up-to-L2-1}
\E \|u \|^\alpha _{L_\infty((\rho, T) \times  Q)} \ &\leq \rho^{-\tilde{\theta}} N \E \left( 1+\| u \|_{L_2(Q_T)}^\alpha +{\color{black}|\mathcal{V}_{\mu}|^\alpha} \right),
\end{align}
where  $N$ is a constant depending only on $\alpha, \tilde{m}, T, c,K,d, \mu$, and $|Q|$, and $\tilde{\theta}>0$  is a constant depending only on $\alpha, d,\mu $ and $\tilde{m}$.
\end{lemma}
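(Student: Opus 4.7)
The plan is to start from Lemma \ref{lem:p0} and replace the $L_q$-norm on its right-hand side by an $L_2$-norm via interpolation together with an iteration in $\rho$. Write $\Psi(\rho) := \E\|u\|_{L_\infty((\rho,T)\times Q)}^\alpha$, and denote by $\tilde{\theta}_0$ the constant produced by Lemma \ref{lem:p0}, so that for $q := p_{n_0} \geq 2$ and $r_{n_0}(\rho) := \rho(1-2^{-n_0})$,
\[
\Psi(\rho) \leq \rho^{-\tilde{\theta}_0} N\,\E\bigl(1 + \|u\|_{L_q((r_{n_0}(\rho),T)\times Q)}^\alpha + |\mathcal{V}_\mu|^\alpha\bigr).
\]

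The first step is to apply the pathwise interpolation $\|u\|_{L_q} \leq \|u\|_{L_\infty}^{1-2/q}\|u\|_{L_2}^{2/q}$ and then Young's inequality with conjugate exponents $q/(q-2)$ and $q/2$, which give
\[
\|u\|_{L_q((r_{n_0}(\rho),T)\times Q)}^\alpha \leq \varepsilon\,\|u\|_{L_\infty((r_{n_0}(\rho),T)\times Q)}^\alpha + N\varepsilon^{-(q-2)/2}\|u\|_{L_2(Q_T)}^\alpha
\]
for any $\varepsilon > 0$. Substituting this and writing $B := 1 + \E\|u\|_{L_2(Q_T)}^\alpha + \E|\mathcal{V}_\mu|^\alpha$ produces the recursion
\[
\Psi(\rho) \leq N\rho^{-\tilde{\theta}_0}\varepsilon\,\Psi(r_{n_0}(\rho)) + N\rho^{-\tilde{\theta}_0}\varepsilon^{-(q-2)/2}B.
\]
The second step is to iterate with $\rho_k := \rho(1-2^{-n_0})^k$ (so that $r_{n_0}(\rho_k) = \rho_{k+1}$) and to choose at the $k$-th step $\varepsilon_k := \rho_k^{\tilde{\theta}_0}/(N\,2^{k+1})$, which forces the coefficient of $\Psi(\rho_{k+1})$ to equal $2^{-(k+1)}$. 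Unrolling $K$ steps yields
\[
\Psi(\rho) \leq 2^{-K(K+1)/2}\Psi(\rho_K) + \sum_{k=0}^{K-1} 2^{-k(k+1)/2}\Bigl(N\rho_k^{-\tilde{\theta}_0} + N^{q/2}\,2^{(k+1)(q-2)/2}\rho_k^{-\tilde{\theta}_0 q/2}B\Bigr).
\]
Since $\rho_k^{-\tilde{\theta}_0}$ and $2^{k(q-2)/2}$ grow only geometrically in $k$ while $2^{-k(k+1)/2}$ decays super-geometrically, the series converges and is dominated by $N\rho^{-\tilde{\theta}}B$ for $\tilde{\theta} := \tilde{\theta}_0\,q/2$, which depends only on $\alpha, d, \mu, \tilde{m}$; this is exactly the bound asserted in \eqref{eq:up-to-L2-1}.

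The one delicate step is ensuring that the remainder $2^{-K(K+1)/2}\Psi(\rho_K)$ vanishes in the limit $K \to \infty$, which requires that $\Psi(\rho_K)$ grows at most sub-super-exponentially in $K$. I plan to handle this by approximation: truncate the initial datum to $\xi^{(n)} := (\xi\wedge n)\vee(-n) \in L_\infty(Q)$, apply Theorem \ref{thm: quasilinear nd} to obtain that the corresponding $\Psi^{(n)}$ is finite on $(0,T)$, carry out the iteration above for $u^{(n)}$ to derive a bound on $\Psi^{(n)}(\rho)$ that does not depend on $n$, and finally let $n \to \infty$ using $L_2$-stability of the non-degenerate equation together with Fatou's lemma on the left-hand side. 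This approximation is the main technical point, since the iteration itself involves only elementary manipulations once the recursion is in place.
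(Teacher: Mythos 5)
Your recursion in $\rho$ is set up correctly and the elementary manipulations (interpolation $\|u\|_{L_q}\leq \|u\|_{L_\infty}^{1-2/q}\|u\|_{L_2}^{2/q}$, Young's inequality, the choice $\varepsilon_k = \rho_k^{\tilde\theta_0}/(N2^{k+1})$, the super-geometric decay of $2^{-k(k+1)/2}$ overpowering the geometric growth of $\rho_k^{-\tilde\theta_0 q/2}$) are all sound. The gap is exactly where you flag it: the remainder $2^{-K(K+1)/2}\Psi(\rho_K)$ need not vanish, because $\rho_K\to 0$ and $\Psi(\rho_K)\uparrow \E\|u\|^\alpha_{L_\infty(Q_T)}$, which has no a priori bound whatsoever under the hypotheses of the lemma (only $\xi\in L_2(Q)$, not $L_\infty$). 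Your proposed fix by truncating the initial datum does not close this gap. First, the paper establishes well-posedness only for the \emph{degenerate} equation of Section~3, not for the general non-degenerate quasilinear SPDE \eqref{eq: nd quasilinear}; the object ``the solution $u^{(n)}$ with initial datum $\xi^{(n)}$'' is therefore not defined, since existence and uniqueness for this class are not available. Second, even granting existence, the ``$L_2$-stability'' you invoke is not a property this equation has: the diffusion coefficients $a^{ij}(x,u)$ depend nonlinearly on $u$ and satisfy only the ellipticity bound \eqref{eq: ellipticity}, with no Lipschitz or monotonicity condition in $r$, so the map $u\mapsto \partial_i(a^{ij}(x,u)\partial_j u)$ is not contractive in $L_2$ (or in $H^{-1}$). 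Passing to the limit $n\to\infty$ in the bound on $\Psi^{(n)}(\rho)$ therefore has no justification.

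The paper's proof avoids the issue entirely by not running an infinite iteration in $\rho$. After invoking Lemma~\ref{lem:p0} to reduce to bounding $\E\|u\|^\alpha_{L_{p_{n_0}}((r_{n_0},T)\times Q)}$, it proves the intermediate estimate \eqref{eq:up-to-L2} by a chain of \emph{finitely many} estimates \eqref{eq:rk}, each moving from $L_{p_k}((\varrho_k,T)\times Q)$ to $L_{p_{k+1}}((\varrho_{k+1},T)\times Q)$ via a cut-off function in time, It\^o's formula, and the embedding of Lemma~\ref{lem: embeding}, where $p_0=2$ and the chain terminates at the first $n'$ with $p_{n'}\geq p_{n_0}$. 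Because only finitely many steps are composed, no limit in the iteration index is taken, hence no remainder term appears and no approximation of the data is needed. If you want to keep your interpolation idea, you would need to establish that the truncated equations are solvable and that their solutions converge to $u$ in a topology strong enough to pass to the limit in $\E\|\cdot\|^\alpha_{L_\infty((\rho,T)\times Q)}$ and $\E\|\cdot\|^\alpha_{L_2(Q_T)}$; that is precisely the machinery the paper does not have at the level of this abstract quasilinear class, and which the finite-iteration argument is designed to sidestep.
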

\begin{proof}
Due to Lemma \ref{lem:p0}, we only need to estimate 
$\E \|u \|^\alpha _{L_{p_{n_0}}((r_{n_0},T)\times Q)} $ by the right hand side of \eqref{eq:up-to-L2-1}. For this, it suffices to show that 
for all $\beta>0$, $p>2$, and $\varrho \in (0,1)$ we have 
\begin{equation}          \label{eq:up-to-L2}
\E \|u \|^\beta _{L_p((\varrho, T) \times  Q)} \ \leq N \varrho^{-\tilde{\theta}} \E \left( 1+\| u \|_{L_2(Q_T)}^\beta +{\color{black}|\mathcal{V}_\mu|}^\beta \right),
\end{equation}
where $N$ is a constant depending only on $\beta, p, \varrho, \tilde{m}, T, c,K,d, r$ and $|Q|$, and $\tilde{\theta}>0$ depends only on $\beta,  d$ and $\tilde{m}$.
{\color{black}We assume that the right hand side of \eqref{eq:up-to-L2} is finite.  Let us set $p_0=2$, $p_{n+1}=\tilde{m}+ p_n \bar \gamma$, $n'=\min \{ n\in \mathbb{N} \colon p_n \geq p \}$, and 
$\varrho_k= k\varrho / n'$, for $k=0,...,n'$.  Clearly, it suffices to prove that for all $k=0,...,n'$ we have 
\begin{equation}       \label{eq:rk}
\E \|u \|^\beta _{L_{p_{k+1}}((\varrho_{k+1}, T) \times  Q)} \ \leq \varrho^{-\tilde \theta } N \E \left( 1+\| u \|_{L_{p_k}((\varrho_k,T)\times Q)}^\beta +|\mathcal{V}_\mu|^\beta \right),
\end{equation}}
since \eqref{eq:up-to-L2} follows by iterating \eqref{eq:rk} finitely many times. We assume that 
the right hand side of \eqref{eq:rk} is finite and we first prove it for {\color{black}$k\geq 1$}. Let $\varrho_k'=(\varrho_k+\varrho_{k+1})/2$. Let $ \psi \in C^1([0, T])$ with $0 \leq \psi \leq 1$, $\psi_t=0$ for $ 0\leq t \leq \varrho_k'$, $\psi_t=1$ for $\varrho_{k+1}\leq t \leq T$, and $|\psi'_t| \leq 2 n' \varrho^{-1}$. 
Then, similarly to \eqref{eq: consequence of Ito's} we have for $p \geq 2$
\begin{align}
\nonumber
&\psi_t\| u_t\|_{L_p}^p + cp(p-1)\int_0^t \int_Q \psi |u|^{\tilde{m}+p-2}| \nabla u|^2 \, dx ds 
\\
\nonumber
& \leq  N {\color{black} \left( \|V^1\|_{L_\mu(Q_t)}^p + \|V^2\|_{L_{2\mu}(Q_t)}^p\right)+ N   \| \psi^{1/p}   u\|^p_{L_{\mu'p}(Q_t)} }
\\         \label{eq:A}
& +\int_0^t  \psi'\|u\|^p_{L_p}  ds +M_t,
\end{align}
with $M_t$ is the martingale from \eqref{eq: ito product}. If $\beta  \gamma / p_{k+1} < 1$, then by virtue of Lemma \ref{lem: Revuz-Yor} and the familiar techniques of Lemma \ref{lem: right right} we obtain 
\begin{equs}
\nonumber 
&\E\left( \sup_{t \in [0,T]}\psi_t \|u_t\|^{p}_{L_{p}} + \int_0^T \int_Q \psi \left| \nabla |u|^{(\tilde{m}
+{p})/2} \right|^2 \, dx dt \right) ^{\frac{\beta \gamma} { p_{k+1} }}
\\ 
\leq N & \E \left( {\color{black} \left( \|V^1\|_{L_\mu(Q_T)}^p + \|V^2\|_{L_{2\mu}(Q_T)}^p\right) +   \| \psi^{1/p}   u\|^p_{L_{\mu'p}(Q_T)} }
 +\int_0^T  \psi'\|u\|^p_{L_p}  ds\right) ^{\frac{\beta \gamma} { p_{k+1} }},
 \\
  \label{eq:B} 
\end{equs}
where $N$ depends on $\beta, p, \varrho, \tilde{m}, T, c,K,d,r$ and $|Q|$. If $\beta \gamma / p_{k+1} \geq 1$ we have by the Burkholder-Davis-Gundy inequality
\begin{equation}          \label{eq:F}
\E\sup_{t \leq T}|M_t|^ {\beta \gamma / p_{k+1}} \leq N \E\langle M \rangle_T^{\beta \gamma / 2 p_{k+1}}.
\end{equation}
Again, as in the derivation of \eqref{eq: quadratic variation} we have 
\begin{align*}
{\color{black}\langle M\rangle_T \leq \sup_{t \leq T} \psi_t\|u_t\|_{L_p}^p \left( \|V^2\|_{L_{2\mu}(Q_T)}^p
+ \|  \psi^{1/p}u\|^p_{L_{\mu'p}(Q_T)}\right) }
\end{align*}
which combined with \eqref{eq:F} gives  by virtue of Young's inequality, for any $\varepsilon>0$
\begin{align*}
\E\sup_{t \leq T}|M_t|^ {\beta \gamma / p_{k+1}}&\leq \varepsilon \E \sup_{t \leq T} (\psi_t\|u_t\|_{L_{p}}^{p})^{\beta \gamma /  p_{k+1}}
\\
 &+ N {\color{black}\E\left( \|V^2\|_{L_{2\mu}(Q_T)}^p
+ \|  \psi^{1/p}u\|^p_{L_{\mu'p}(Q_T)}\right) ^{\beta \gamma / p_{k+1}}.}
\end{align*}
Using this and \eqref{eq:A} we get \eqref{eq:B} (for $\beta \gamma / p_{k+1} \geq 1$), provided that the quantity $\E \sup_{t \leq T} (\psi_t\|u_t\|_{L_{p}}^{{p}})^{\beta \gamma /  p_{k+1}}$ is finite, which can be achieved by a localization argument. Now we use \eqref{eq:B} with {\color{black}$p= p_k/\mu'$ }{\color{black}(notice that $p_k/\mu' \geq 2$ for $k \geq 1$) }{\color{black}and using the properties of  $\psi$ and the fact that $\bar \gamma / p_{k+1} \leq  1/ p_k$, \eqref{eq:B} yields
\begin{align}
\nonumber 
&\E\left( \sup_{t \in [\varrho_{k+1},T]} \|u_t\|^{p_k/\mu'}_{L_{p_k/\mu'}} + \int_{\varrho_{k+1}}^T \int_Q  \left| \nabla |u|^{(\tilde{m}
+{p_k/\mu'})/2} \right|^2 \, dx dt \right)^{\beta \gamma / p_{k+1} }
\\
& 
\nonumber
\leq \varrho^{-\tilde \theta } N \E \left( 1+\| u \|_{L_{p_k}((\varrho_k,T)\times Q)}^\beta +|\mathcal{V}_\mu|^\beta \right).
\end{align}}
An application of Lemma \ref{lem: embeding} (see also \eqref{eq:C} and \eqref{eq: right estimate}) gives \eqref{eq:rk}. Recall that we have assumed that $k \geq 1$. {\color{black}For $k=0$, instead of \eqref{eq:A}, we use the estimate  
\begin{align}
\nonumber
&\psi_t\| u_t\|_{L_p}^p + cp(p-1)\int_0^t \int_Q \psi |u|^{\tilde{m}+p-2}| \nabla u|^2 \, dx ds 
\\
\nonumber
& \leq  N  \left( \|V^1\|_{L_p(Q_t)}^p + \|V^2\|_{L_p(Q_t)}^p\right)+ N   \| \psi^{1/p}   u\|^p_{L_{p}(Q_t)} 
\\         \nonumber
& +\int_0^t  \psi'\|u\|^p_{L_p}  ds +M_t.
\end{align}
We apply it with $p=2$ and we proceed as above, this time raising to the power $\gamma \beta / (\tilde{m} +2\gamma)$. Following the same steps, one arrives at the estimate
\begin{align}
\nonumber 
&\E\left( \int_{\varrho_{1}}^T \int_Q  |u|^{\tilde{m}
+2 \gamma}  \, dx dt \right)^{\beta /(\tilde{m}
+2 \gamma ) }
\\
& 
\nonumber
\leq \varrho^{-\tilde \theta } N \E \left( 1+\| u \|_{L_{2}(Q_T)}^\beta +\|V^1\|^\beta_{L_2(Q_T)} +\|V^2\|^\beta_{L_2(Q_T)} \right).
\end{align}
This finishes the proof since $\tilde{m}+2 \gamma \geq \tilde{m}+ 2 \bar \gamma= p_1$.}
\end{proof}

\begin{theorem}       \label{thm: smoothing2}
Suppose that Assumptions \ref{as: nd}-\ref{as:boundednessV} are satisfied. Let $u \in \mathbb{L}_2$  be a solution of  \eqref{eq: nd quasilinear}-\eqref{eq: initial nd quasilinear} and let $\alpha >0$ and {\color{black}$\mu \in \Gamma_d$}.  Then, for all  $\rho \in (0,1)$ we have 
\begin{align}         
\label{eq:D}
\E \|u \|^\alpha _{L_\infty((\rho, T) \times  Q)} \ &\leq \rho^{-\tilde{\theta}} N \E \left( 1+\| \xi\|_{L_2(Q)}^\alpha +{\color{black}\|V^1\|_{L_\mu(Q_T)}^\alpha +\|V^2\|^\alpha_{L_{2\mu}(Q_T)}} \right)
\end{align}
where  $N$ is a constant depending only on $\alpha, \tilde{m}, T, c,K,d, \mu$, and $|Q|$, and $\tilde{\theta}>0$  is a constant depending only on $\alpha, d, \mu$ and $\tilde{m}$.
\end{theorem}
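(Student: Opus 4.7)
The plan is a direct two-step reduction combining Lemma \ref{thm: smoothing} (which already converts the $L_\infty$ estimate on $(\rho,T)\times Q$ into an estimate involving $\|u\|_{L_2(Q_T)}$) with the stochastic Gronwall estimate Lemma \ref{lem: stochastic Gronwal} applied at $p=2$ (which trades $\|u\|_{L_2(Q_T)}$ for $\|\xi\|_{L_2(Q)}$). So most of the work has already been done; what remains is essentially a norm comparison on the bounded cylinder $Q_T$ using $\mu \ge 2$.

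Concretely, I would first apply Lemma \ref{thm: smoothing} with the given $\alpha$ and $\rho$, obtaining
\begin{equation*}
\E \|u\|^\alpha_{L_\infty((\rho,T)\times Q)} \leq \rho^{-\tilde\theta} N\, \E \left( 1 + \|u\|_{L_2(Q_T)}^\alpha + |\mathcal{V}_\mu|^\alpha \right),
\end{equation*}
with $|\mathcal{V}_\mu| = \|V^1\|_{L_\mu(Q_T)} + \|V^2\|_{L_{2\mu}(Q_T)}$. Since $\|u\|_{L_2(Q_T)}^2 \leq T \sup_{t\le T}\|u_t\|_{L_2}^2$, it suffices to bound $\E \sup_{t\le T}\|u_t\|_{L_2}^\alpha$ in terms of the right-hand side of \eqref{eq:D}.

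For this, I would invoke Lemma \ref{lem: stochastic Gronwal} with $p=2$:
\begin{equation*}
\E \sup_{t \le T}\|u_t\|_{L_2}^\alpha \leq N\, \E \|\xi\|_{L_2}^\alpha + N\, \E \left( \int_0^T \|V^1\|_{L_2}^2 + \|V^2\|_{L_2}^2 \, ds\right)^{\alpha/2}.
\end{equation*}
Because $\mu \in \Gamma_d$ satisfies $\mu \ge 2$ and $Q$ is bounded, H\"older's inequality on the finite measure space $Q_T$ yields $\|V^1\|_{L_2(Q_T)} \le N\|V^1\|_{L_\mu(Q_T)}$ and $\|V^2\|_{L_2(Q_T)} \le N\|V^2\|_{L_{2\mu}(Q_T)}$, with $N$ depending only on $T$, $|Q|$ and $\mu$. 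Combining the two displays gives \eqref{eq:D}.

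There is no real obstacle beyond checking that the hypotheses needed for each cited lemma are in force (Assumption \ref{as:boundednessV} for Lemma \ref{thm: smoothing}, and only Assumption \ref{as: nd} for Lemma \ref{lem: stochastic Gronwal}), and tracking which constants and exponents $N, \tilde\theta$ depend on, so that they depend only on $\alpha, \tilde{m}, T, c, K, d, \mu, |Q|$ as claimed. The finiteness of all right-hand sides during the argument is already guaranteed by Lemma \ref{lem: stochastic Gronwal} together with Assumption \ref{as:boundednessV}, so no additional localization is required at this stage.
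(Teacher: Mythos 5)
Your proof is correct and matches the paper's approach, which likewise derives \eqref{eq:D} directly by combining Lemma \ref{thm: smoothing} with Lemma \ref{lem: stochastic Gronwal}; your write-up merely spells out the norm comparisons ($\|u\|_{L_2(Q_T)}^2 \le T\sup_t\|u_t\|_{L_2}^2$ and H\"older from $L_\mu$, $L_{2\mu}$ down to $L_2$ on the bounded cylinder) that the paper leaves implicit.
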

\begin{proof}
The conclusion of the theorem follows immediately
from Lemma \ref{lem: stochastic Gronwal} and Lemma \ref{thm: smoothing}.
\end{proof}

\begin{remark} 
In Theorems \ref{thm: quasilinear nd} and \ref{thm: smoothing2}, the expressions $ \| u\|_{L_\infty(Q_T)}$   and $ \| u\|_{L_\infty((\rho, T)\times Q)}$ can be replaced by $\sup_{t \in [0,T]} \|u_t\|_{L_\infty(Q)}$ and $\sup_{t \in [\rho,T]} \|u_t\|_{L_\infty(Q)}$, respectively. This follows from the fact that $u$ is a continuous $L_2(Q)$-valued process. 
\end{remark}

\begin{remark}
A cut-off argument in space, similar to the cut-off in time as it was used in the proof of Theorem \ref{thm: smoothing2}, can be used in order to derive local in space-time estimates that are applicable not only to solutions of the Dirichlet problem but to any $u$ satisfying \eqref{eq: nd quasilinear} (see, e.g.,  \cite{KOMA}).
\end{remark}

\section{Degenerate Quasilinear SPDE}
In this section, we proceed with the degenerate equation \eqref{eq: PME stratonovich}. Notice that the constant $N$ in Theorem \ref{thm: quasilinear nd} and Theorem \ref{thm: smoothing} of the previous section does not depend on the non-degeneracy constant $\theta$. Using this fact we can deduce similar estimates for the stochastic porous medium equation \eqref{eq: PME stratonovich}.  

Suppose that on $(\Omega, \mathcal{F}, \mathbb{F}, \bP)$ we are given  independent $\bR$-valued Wiener processes $\tilde{\beta}^1_t,..., \tilde{\beta}^d_t, w^1_t, w^2_t , ...$. Moreover, in this section we will assume that the domain $Q$ is convex and that the boundary $\D Q$ is of class $C^2$. The Stratonovich integral 
$$
\sum_{i=1}^d \sigma_t \D_iu_t \circ d\tilde{\beta}^i_t
$$
 in \eqref{eq: PME stratonovich} is a short notation for $$
 \frac{1}{2} \sigma_t^2 \Delta u_t \, dt + \sum_{i=1}^d \sigma_t \D_iu_t \, d \tilde{\beta}^i_t.
$$
  In the following, we consider a slightly more general class of equations. Namely, 
on $(0,T) \times Q$ we  consider the stochastic porous medium equation (SPME) of the form
\begin{equation} 
\begin{aligned}\label{eq: main equation}
du = & \left[ \Delta \left( \Phi (u) \right) +H_tu +f_t(x) \right]  \, dt + \sum_{i=1}^d \sigma_t\D_i u  \,  d\tilde{\beta}^i_t\\
+&  \sum_{k=1}^\infty\left[\nu^k_t(x) u  +g^k_t(x) \right] dw^k_t
\\
u_0= & \xi,
\end{aligned}
\end{equation}
with zero Dirichlet boundary condition on $\D Q$, 
where 
$$
H_tu:=\frac{\sigma_t^2}{2} \Delta u  +b^i_t(x)\D_iu+c_t(x)u .
$$
If we set 
\begin{align*}
\beta^k_t:=
\begin{cases}
\tilde{\beta}^k_t , & \text{for } k\in \{1,...,d\}  \\
        w^{k-d}_t, & \text{for } k \in \{ d+1,d+2,...\}
\end{cases}
\end{align*}
and 
\begin{align*}
 M^k_t(u):=
\begin{cases}
\sigma_t\D_k u  , & \text{for } k\in \{1,...,d\}\\
  \nu^{k-d}_t(x)u+g^{k-d}_t(x)      , & \text{for } k \in \{ d+1,d+2,...\},
\end{cases}
\end{align*}
we can rewrite \eqref{eq: main equation} in the more compact form 
\begin{equation}
\begin{aligned}
du = & \left[ \Delta \left( \Phi(u) \right) +H_tu +f_t(x) \right]  \, dt + \sum_{k=1}^\infty M^k_t(u)  d\beta^k_t
\\
u_0= & \xi.
\end{aligned}
\end{equation}
\begin{assumption}                   \label{as: coeff}
$$
$$
\vspace{-1 cm}
\begin{enumerate}
\item \label{item: coercivity of phi} The function $\Phi: \bR \to \bR$ is continuously differentiable,  non-decreasing, such that $\Phi(0)=0$. There exists constants $\lambda >0$, $C \geq 0$, $m \in [1, \infty)$ such that for all $r \in \bR$ we have 
$$r \Phi(r) \geq \lambda |r|^{m+1}-C, \qquad  |\Phi'(r)| \leq C|r|^{m-1}+C.
$$
\item For each $i =1,..., d$, the functions \label{as: coefficients drift}
$ b^i, c : \Omega_T \times \overline{Q} \to \bR$ are 
$\mathcal{P} \otimes \mathcal{B}(\overline{Q})$-measurable, and for all $(\omega, t) \in \Omega_T$ we have 
$ b^i \in C^2(\overline{Q};\bR)$, $c \in C^1(\overline{Q};\bR)$, and $b^i=0$ on $\D Q$. 
The functions \label{as: coefficients noise}
$\sigma: \Omega_T \to \bR$ and $\nu: \Omega_T\times \overline{Q} \to l_2$ are $\mathcal{P}$- and $\mathcal{P}\times \mathcal{B}(\overline{Q})$-measurable, respectively, and for all $(\omega,t) \in \Omega_T$ we have $\nu \in C^1(\overline{Q};l_2)$.
Moreover,  there exists a constant $K$ such that for all 
$(\omega, t,x) \in \Omega_T \times Q$ we have
\begin{align*}
&|\sigma_t| + | b^i_t(x)|+|c_t(x)|+|\nu_t(x)|_{l_2}
\\
+&  | \nabla b^i_t(x)|+|\nabla c_t(x)| + |\nabla \nu_t(x)|_{l_2}+| \nabla^2 b^i_t(x)|\leq K
\end{align*}

\item \label{as: free terms} The functions $f : \Omega_T \to H^{-1}$ and  $g^k:\Omega_T \to H^{-1}$, for $k \in \bN$, are $\mathcal{P}$-measurable and it holds that
$$
\E \int_0^T ( \|f\|_{H^{-1}}^2+ \sum_{k=1}^\infty \|g^k\|^2_{H^{-1}})  \,  dt < \infty.
$$
\item \label{as:icH-1}The initial condition $\xi$ is an $\mathcal{F}_0$-measurable $H^{-1}$-valued random variable such that $\E \|\xi\|^2_{H^{-1}}< \infty$.
\end{enumerate}
\end{assumption}

\begin{assumption}           \label{as: extra condition}
There exists a constant $\bar{c}>0$ such that  $\bar{c}|r|^{m-1} \leq \Phi'(r) $ for all $r \in \bR$.
\end{assumption}

We note that Assumption \ref{as: extra condition} implies the first part of Assumption \ref{as: coeff} \eqref{item: coercivity of phi}, that is $r \Phi(r) \geq \bar{c}|r|^{m+1}$. 

Let us set $A_t(u):= \Delta \left( \Phi(u)\right) + H_tu_t +f_t $. 
The operators are understood in the following sense:
For $u \in L_{m+1}(Q)$, we have $A_t(u) \in (L_{m+1})^*$, $M_t^k(u)\in H^{-1}$, given by 
\begin{align*}
{}_{(L_{m+1})^*}\langle A_t(u), \phi \rangle_ {L_{m+1}}:= &-\int_Q \Phi(u) \phi \, dx-  \int_Q \frac{1}{2}\sigma^2_t u  \phi   \, dx \\
&-\int_Q u \D_i (b^i_t (-\Delta)^{-1} \phi)+ \int_Q (c_tu+f_t) (-\Delta)^{-1} \phi \, dx,
\\
\left(M^k_t(u)\right)(\psi) :=& \left\{\begin{array}{lr}        -\int_Q \sigma_t u \D_k \psi \, dx , \ \ \ \  \qquad \text{for} \ k\in \{1,...,d\}\\
        \int_Q (\nu_t^{k-d}u+g^{k-d}_t) \psi \, dx \ \ \text{for } k \in \{ d+1,d+2,...\},
        \end{array}\right.
\end{align*}
for $\phi \in L_{m+1}(Q)$, $\psi \in H^1_0(Q)$, where $(-\Delta)^{-1}$ denotes the inverse of the Dirichlet Laplacian on $Q$.
Notice that for $\phi \in H^{-1}$ (in particular for $\phi \in L_{m+1}(Q)$), it holds that (see, e.g., p. 69 in \cite{MR})
$$
(M^k_t(u), \phi)_{H^{-1}}=\left(M^k_t(u)\right)((-\Delta)^{-1}\phi).
$$
\begin{definition}               \label{def: solution}
A solution of equation \eqref{eq: main equation} is a process  $u \in \mathbb{H}^{-1}_{m+1}$, such that for all $\phi \in L_{m+1}(Q)$, with probability one we have 
$$
(u_t, \phi)_{H^{-1}}= (\xi , \phi)_{H^{-1}}+\int_0^t {}_{(L_{m+1})^*}\langle A(u), \phi \rangle_{L_{m+1}} \, ds + \int_0^t (M^k(u), \phi)_{H^{-1}} d\beta^k_s,
$$
for all $t \in [0,T]$. 

\end{definition}

\subsection{Well-posedness}
In this subsection we show that the problem \eqref{eq: main equation} has  a unique solution. This will be a  consequence of  \cite[Theorems 3.6 and 3.8]{KR}, once the respective assumptions are shown to be fulfilled. This is the purpose of the following lemmata. 

\begin{remark}
In Definition \ref{def: solution}, the set of full probability on which the equality is satisfied can be chosen independently of $\phi \in L_{m+1}$. This follows by the fact that the expression
$$
\int_0^\cdot M^k(u) \, d\beta^k_s
$$
is a continuous $H^{-1}$-valued martingale, combined with
the separability of $L_{m+1}$. 
\end{remark}

\begin{lemma}
Under Assumption \ref{as: coeff} there is a constant $N$ depending only on $K$ such that for all $(\omega, t) \in \Omega_T$, $u \in L_{m+1}(Q)$ we have 
\begin{align}   \label{eq: first derivatives}
\left| \int_Q  u \D_i( b^i_t ( - \Delta )^{-1} u ) \, dx \right| \leq & N \|u \|_{H^{-1}}^2, \\  \label{eq: zero order}
\left| \int_Q  u c_t ( - \Delta )^{-1} u \, dx \right| \leq & N \|u \|_{H^{-1}}^2.
\end{align}
\end{lemma}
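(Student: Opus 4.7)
The plan is to introduce $v := (-\Delta)^{-1} u \in H^1_0(Q)$ as the workhorse variable, so that $\|\nabla v\|_{L_2}^2 = \|u\|_{H^{-1}}^2$ and both left-hand sides become integrals against a fixed, regular function. Since $Q$ has $C^2$ boundary and, because $m \geq 1$ and $Q$ is bounded, $u \in L_{m+1}(Q) \hookrightarrow L_2(Q)$, elliptic regularity gives $v \in H^2 \cap H^1_0$, which is exactly the regularity needed to justify the integration by parts below.

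For \eqref{eq: zero order}, I would observe that $\phi := c_t v$ belongs to $H^1_0(Q)$ (it vanishes on $\partial Q$ because $v$ does, and its gradient is in $L_2$ by boundedness of $c_t$ and $\nabla c_t$). Using $u = -\Delta v$ in the weak sense gives $\int_Q u c_t v\,dx = \int_Q \nabla v \cdot \nabla(c_t v)\,dx$, and $\|\nabla(c_t v)\|_{L_2} \leq N(\|v\|_{L_2}+\|\nabla v\|_{L_2}) \leq N\|\nabla v\|_{L_2}$ by Poincar\'e; Cauchy--Schwarz then yields the estimate.

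For \eqref{eq: first derivatives}, I would split
$$\int_Q u\,\partial_i(b^i_t v)\,dx \;=\; \int_Q u\,(\partial_i b^i_t)\,v\,dx \;+\; \int_Q u\, b^i_t\,\partial_i v\,dx.$$
The first piece is handled exactly as in \eqref{eq: zero order}, replacing $c_t$ by $\partial_i b^i_t$ and using the $C^2$ bound on $b^i_t$. The second piece is the delicate one: since $b^i_t = 0$ on $\partial Q$, the function $\psi := b^i_t \partial_i v$ lies in $H^1_0(Q)$ (here I need $v \in H^2$, provided by elliptic regularity), so I can rewrite $\int_Q u\,b^i_t\,\partial_i v\,dx = \int_Q \nabla v\cdot\nabla(b^i_t\partial_i v)\,dx$. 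Expanding the gradient produces a harmless lower-order term $\int_Q \partial_j v\,(\partial_j b^i_t)\,\partial_i v\,dx$ bounded by $K\|\nabla v\|_{L_2}^2$, together with a problematic second-derivative term $\int_Q b^i_t\,\partial_j v\,\partial_i\partial_j v\,dx$.

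The main obstacle is precisely this last term, which a priori seems to require an $H^2$-bound on $v$ (equivalent to the $L_2$-norm of $u$, not of $H^{-1}$). I would dispose of it via the symmetrization trick $\partial_j v\,\partial_i\partial_j v = \tfrac{1}{2}\partial_i(|\nabla v|^2)$ followed by one more integration by parts: using again $b^i_t = 0$ on $\partial Q$, the boundary contribution vanishes and the integral collapses to $-\tfrac{1}{2}\int_Q (\partial_i b^i_t)\,|\nabla v|^2\,dx$, which is dominated by $\tfrac{K}{2}\|\nabla v\|_{L_2}^2 = \tfrac{K}{2}\|u\|_{H^{-1}}^2$. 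Combining all pieces yields the claimed bound with $N$ depending only on $K$ (and the Poincar\'e constant of $Q$, absorbed into $N$).
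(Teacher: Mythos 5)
Your proof is correct and follows essentially the same route as the paper: both decompose the integral into a lower-order piece and the piece $\int u\,b^i_t\,\partial_i v\,dx$, rewrite $u = -\Delta v$ (equivalently, test the weak Laplacian against an $H^1_0$ function), and then handle the apparently second-order term via the symmetrization $\partial_j v\,\partial_i\partial_j v = \tfrac12\partial_i|\nabla v|^2$ together with one more integration by parts that uses $b^i_t = 0$ on $\partial Q$. The only cosmetic difference is that the paper first establishes the bound for $u \in C^\infty_c(Q)$ and invokes density, whereas you work directly with $u \in L_2(Q)$ via $H^2$ elliptic regularity; both are justified under the $C^2$-boundary hypothesis of that section.
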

\begin{proof}
By continuity it suffices to show the conclusion for $u \in C^\infty_c(Q)$. We have 
\begin{align}
\int_Q  u \D_i( b^i_t ( - \Delta )^{-1} u ) \, dx& =  \int_Q  (\D_ib^i_t) u( - \Delta )^{-1} u   \, dx  +\int_Q  b^i_t u\D_i( - \Delta )^{-1}  u   \, dx.
\end{align}
Recall that $\D Q \in C^2$ which implies $(-\Delta)^{-1}u \in H^2(Q)$. Hence, writing $u= (-\Delta) (-\Delta)^{-1}u$, integration by parts gives ($(-\Delta)^{-1}u$ vanishes on $\D Q$)
\begin{align}
\nonumber
\left| \int_Q  (\D_ib^i_t) u( - \Delta )^{-1} u   \, dx \right| & \leq  \left| \int_Q (\D_{ij} b^i_t) \left(\D_j ( - \Delta )^{-1} u\right)  (-\Delta)^{-1} u \, dx \right| \\ \nonumber
& +\left| \int_Q (\D_ib^i_t)\left( \D_j( - \Delta )^{-1} u\right)^2 \, dx \right|  
\\  \label{eq: first derivatives estimate}
& \leq N \| u \|^2_{H^{-1}},
\end{align}
where we have used  Young's and Poincar\'e's inequalities. For the other term,  since $\D _j(-\Delta)^{-1}u \in H^1(Q)$ (recall that $\D Q \in C^2$) and  $b^i$ vanishes on $\D Q$, we have 
\begin{align}
\nonumber 
\int_Q  b^i_t u\D_i( - \Delta )^{-1}  u   \, dx =
&\int_Q  b^i_t \left( \D_j(-\Delta)^{-1}u \right) \D_j\D_i( - \Delta )^{-1}  u   \, dx \\  \nonumber
+&\int_Q  (\D_jb^i_t) \left( \D_j (-\Delta)^{-1}u \right) \D_i ( - \Delta )^{-1} u   \, dx.
\end{align} 
For the second term in the above equality we have by H\"older's inequality 
$$
\left| \int_Q  (\D_jb^i_t) \left(\D_j (-\Delta)^{-1}u \right)\D_i( - \Delta )^{-1}   u   \, dx \right| \leq N \|u\|^2_{H^{-1}},
$$
while for the first term we have
\begin{align}
\nonumber
\left| \int_Q  b^i_t \left( \D_j(-\Delta)^{-1}u \right) \D_i\D_j( - \Delta )^{-1}  u   \, dx \right|  &= \frac{1}{2}\left| \int_Q b^i_t \D_i \left( \D_j(-\Delta)^{-1} u \right)^2  \, dx \right| \\
\nonumber
&= \frac{1}{2}\left| \int (\D_ib^i_t)  \left(\D_j (-\Delta)^{-1} u \right)^2  \, dx \right| \\
\nonumber 
& \leq N \|u\|^2_{H^{-1}},
\end{align}
where we have used again that $b^i=0$ on $\D Q$. 
Hence, 
\begin{equation}                      \label{eq: estimate first derivatives 2}
\left| \int_Q  b^i_t u\D_i( - \Delta )^{-1}  u   \, dx\right| \leq N \|u\|_{H^{-1}}^2.
\end{equation}
Combining \eqref{eq: first derivatives estimate} with \eqref{eq: estimate first derivatives 2} we obtain  \eqref{eq: first derivatives}. Inequality \eqref{eq: zero order} follows similarly from the fact that $|c_t(x)|+|\nabla c_t(x)|\leq K$.

\end{proof}

\begin{lemma}                \label{lem: monotonicity}
Under Assumption \ref{as: coeff}, there exists a constant $N$ depending only on $K$ and $d$ such that for all $(\omega, t)\in \Omega_T$ and all  $\varphi, \psi \in L_{m+1}(Q)$ we have 
\begin{align}
\nonumber
&2 {}_{(L_{m+1})^*}\langle H_t\phi+f_t,\phi  \rangle_{L_{m+1}} +\sum_{k=1}^\infty \| M^k_t(\phi)\|_{H^{-1}}^2  
\\
\label{eq: part of coercivity}
&\leq N \left( \|\phi\|_{H^{-1}}^2+\|f_t\|^2_{H^{-1}}+  \sum_{k=1}^\infty\|g^k_t\|_{H^{-1}}^2\right),
\end{align}
and
\begin{equation}           \label{eq: monotonicity}
2{}_{(L_{m+1})^*}\langle A_t(\phi)-A_t(\psi), \phi-\psi \rangle_{L_{m+1}}+ \sum_{k=1}^\infty \| M^k_t (\phi) -M^k_t 
(\psi) \|^2_{H^{-1}} \leq N \| \phi- \psi\|^2_{H^{-1}}.
\end{equation}
\end{lemma}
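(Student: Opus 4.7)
The plan is to expand both sides using the explicit definitions of $A_t$ and $M^k_t$ given above and bound each of the resulting pieces. For the first inequality I would split the left-hand side into four groups: (i) the $\tfrac{1}{2}\sigma_t^2\Delta$ part of $H_t\phi$ together with the noise terms $\|\sigma_t\partial_k\phi\|^2_{H^{-1}}$ for $k=1,\dots,d$; (ii) the remaining drift pairings $\langle b^i_t\partial_i\phi+c_t\phi,\phi\rangle$; (iii) the pairing with $f_t$; and (iv) the noise $\sum_{k\geq 1}\|\nu^k_t\phi+g^k_t\|^2_{H^{-1}}$. Groups (ii) and (iii) are handled directly by the previous lemma (\eqref{eq: first derivatives}--\eqref{eq: zero order}) together with Young's inequality, so the real work sits in (i) and (iv).

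The main obstacle is (i), i.e.\ showing that the It\^o--Stratonovich correction $\tfrac{1}{2}\sigma_t^2\Delta$ compensates exactly the quadratic variation from the $\sigma_t\partial_k u$ noise measured in the $H^{-1}$-norm. By the definition of the duality pairing, $2\,{}_{(L_{m+1})^*}\langle \tfrac{1}{2}\sigma_t^2\Delta\phi,\phi\rangle_{L_{m+1}}=-\sigma_t^2\|\phi\|^2_{L_2}$, while $\sum_{k=1}^d\|\sigma_t\partial_k\phi\|^2_{H^{-1}}=\sigma_t^2\sum_{k=1}^d\|\partial_k\phi\|^2_{H^{-1}}$. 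It suffices therefore to establish
$$\sum_{k=1}^d \|\partial_k\phi\|^2_{H^{-1}}\leq \|\phi\|^2_{L_2}.$$
To prove this, let $u\in H^2\cap H^1_0$ solve $-\Delta u=\phi$ (possible since $\partial Q\in C^2$), and for each $k$ let $v_k\in H^1_0$ solve $-\Delta v_k=\partial_k\phi$, so that $\|\partial_k\phi\|^2_{H^{-1}}=\|\nabla v_k\|^2_{L_2}$. Then $v_k-\partial_k u$ is harmonic; since $v_k$ has vanishing trace while $\partial_k u$ is unrestricted, the Dirichlet orthogonality $\int\nabla v_k\cdot\nabla(\partial_k u-v_k)=0$ yields $\|\nabla v_k\|_{L_2}\leq \|\nabla\partial_k u\|_{L_2}$. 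Summing over $k$ and invoking the estimate $\|\nabla^2 u\|_{L_2}\leq \|\Delta u\|_{L_2}$, which is valid because $Q$ is convex with $\partial Q\in C^2$, gives the claim.

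For (iv), the key inequality is $\sum_k\|\nu^k_t\phi\|^2_{H^{-1}}\leq N\|\phi\|^2_{H^{-1}}$. I would view this sum as the squared norm of an $l_2$-valued functional and test against $v\in H^1_0(Q;l_2)$: writing $\sum_k\int \nu^k\phi v^k\,dx=\langle\phi,\nu\cdot v\rangle$, where $\nu\cdot v:=\sum_k\nu^k v^k$, one checks that $\nu\cdot v\in H^1_0$ with $\|\nu\cdot v\|^2_{H^1_0}\leq N(\|v\|^2_{L_2(Q;l_2)}+\|\nabla v\|^2_{L_2(Q;l_2)})$ by the Leibniz rule, the pointwise bounds $|\nu|_{l_2}+|\nabla\nu|_{l_2}\leq K$, and the Cauchy--Schwarz inequality in $l_2$. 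Combining with Poincar\'e's inequality gives the bound. The $g^k$ contribution is absorbed through $(a+b)^2\leq 2a^2+2b^2$, producing the $\sum_k\|g^k_t\|^2_{H^{-1}}$ term on the right-hand side, and the pairing with $f_t$ is controlled by Cauchy--Schwarz and Young.

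For the monotonicity \eqref{eq: monotonicity}, set $\theta:=\phi-\psi$. Since $f_t$ cancels and $H_t$ is linear, $A_t(\phi)-A_t(\psi)=\Delta(\Phi(\phi)-\Phi(\psi))+H_t\theta$, and by the definition of the duality pairing the $\Phi$-term contributes $-\int_Q(\Phi(\phi)-\Phi(\psi))(\phi-\psi)\,dx\leq 0$ by the monotonicity of $\Phi$. Moreover $M^k_t(\phi)-M^k_t(\psi)=\sigma_t\partial_k\theta$ for $k\leq d$ and $\nu^{k-d}_t\theta$ for $k>d$, so the surviving expression $2\langle H_t\theta,\theta\rangle+\sum_k\|M^k_t(\phi)-M^k_t(\psi)\|^2_{H^{-1}}$ is of the same form as the left-hand side of the first inequality applied with $\theta$ in place of $\phi$ and with $f_t\equiv 0$, $g^k\equiv 0$; invoking that bound then yields \eqref{eq: monotonicity}.
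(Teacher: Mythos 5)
Your proposal is correct and follows essentially the same strategy as the paper: isolate the $\sigma_t$-terms, reduce to the key inequality $\sum_{i=1}^d\|\partial_i\phi\|^2_{H^{-1}}\leq\|\phi\|^2_{L_2}$, establish it via Grisvard's estimate $\|\nabla^2 v\|_{L_2}\leq\|\Delta v\|_{L_2}$ for convex domains with $C^2$ boundary, handle the drift terms with the preceding lemma, and reduce \eqref{eq: monotonicity} to $-(\Phi(\phi)-\Phi(\psi),\phi-\psi)_{L_2}\leq 0$. The two small points where your route differs are cosmetic but both sound: you derive the key inequality by introducing $v_k=(-\Delta)^{-1}\partial_k\phi$ and comparing it with $\partial_k u$ via harmonic orthogonality, whereas the paper obtains $\|\partial_i\phi\|^2_{H^{-1}}\leq\sum_l\|\partial_i\partial_l(-\Delta)^{-1}\phi\|^2_{L_2}$ directly from the duality pairing and an integration by parts; and you spell out the $l_2$-multiplier bound $\sum_k\|\nu^k_t\phi\|^2_{H^{-1}}\leq N\|\phi\|^2_{H^{-1}}$ by dualizing against $H^1_0(Q;l_2)$ and using $|\nu|_{l_2}+|\nabla\nu|_{l_2}\leq K$, a step the paper leaves implicit.
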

\begin{proof}
We start by proving \eqref{eq: part of coercivity}.
 By  virtue of the previous lemma, it suffices to show that 
\begin{align*}
&-\sigma_t^2 \| \phi \|_{L_2}^2+ ( f_t, \phi)_{H^{-1}}+ \sum_{k=1}^\infty \| M^k_t(\phi) \|^2_{H^{-1}} 
\\
&\leq N \left(\| \phi \|^2_{H^{-1}}+\|f_t\|^2_{H^{-1}}+ \sum_{k=1}^\infty\|g^k_t\|_{H^{-1}}^2\right).
\end{align*}
Clearly it suffices to show the last inequality for $\phi \in C^\infty_c(Q)$.
To this end, we have
\begin{align*}
&-\sigma_t^2 \| \phi \|_{L_2}^2+ ( f_t, \phi)_{H^{-1}}+ \sum_{k=1}^\infty \| M^k_t(\phi) \|^2_{H^{-1}}
\\
=& -\sigma_t^2 \| \phi \|_{L_2}^2+ ( f_t, \phi)_{H^{-1}}+ \sigma_t^2 \sum_{i=1}^d \|\D_i \phi \|^2_{H^{-1}}+ \sum_{k=1}^\infty \| \nu_t^k\phi+g^k_t\|^2_{H^{-1}}
\\
\leq &  \ \sigma_t^2(\sum_{i=1}^d \|\D_i \phi \|^2_{H^{-1}}-\| \phi \|_{L_2}^2) +N \left(\| \phi \|^2_{H^{-1}}+\|f_t\|^2_{H^{-1}}+ \sum_{k=1}^\infty\|g^k_t\|_{H^{-1}}^2\right).
\end{align*}
Hence, we only have to show that 
$$
\sum_{i=1}^d \|\D_i \phi \|^2_{H^{-1}}\leq \| \phi \|_{L_2}^2.
$$
Let $\zeta \in C^\infty_c(Q)$. The action of $\D_i\phi$ on $\zeta$ is given by $-(\phi, \D_i \zeta)_{L_2}$. We have 
\begin{align*}
|(\phi, \D_i \zeta)_{L_2}|=|( (-\Delta)(-\Delta)^{-1} \phi, \D_i \zeta)_{L_2}|& =\left| \sum_{l=1}^d (\D_i  \D_l (-\Delta)^{-1} \phi, \D_l \zeta)_{L_2}\right|
\\
& \leq \|\zeta\|_{H^1_0}^2\sum_{l=1}^d \|\D_i  \D_l (-\Delta)^{-1} \phi\|_{L_2}^2.
\end{align*}
Consequently,
$$
\| \D_i \phi\|_{H^{-1}}^2 \leq \sum_{l=1}^d \|\D_i  \D_l (-\Delta)^{-1} \phi\|_{L_2}^2.
$$
It now suffices to show that 
$$
\sum_{l,i=1}^d \|\D_i  \D_l (-\Delta)^{-1} \phi\|_{L_2}^2\leq \| \phi \|^2_{L_2}.
$$
This follows from the convexity of $Q$. Namely, if $Q$ is a convex, open, bounded subset of $\bR^d$ with boundary of class $C^2$, then it holds that 
$$
\sum_{l,i=1}^d \|\D_i  \D_l v\|_{L_2(Q)}^2\leq \| \Delta v \|^2_{L_2(Q)} 
$$
for all $v \in H^2(Q) \cap H^1_0(Q)$ (see \cite[p139, Theorem 3.1.2.1, inequality (3,1,2,2)]{GRI}). Applying this to $v := (-\Delta)^{-1} \phi$ finishes the proof of \eqref{eq: part of coercivity}.

For \eqref{eq: monotonicity}, by considering \eqref{eq: part of coercivity} (with $f=0, \ g=0$), it is clear that we only have to show that 
 $$
 {}_{(L_{m+1})^*}\langle \Delta \left( \Phi (\phi) \right) - \Delta \left( \Phi (\psi) \right) , \phi -\psi \rangle_{L_{m+1}} \leq 0.
 $$
 This follows from the well-known fact (see, e.g., p.71 in \cite{MR}) that 
 $$
 {}_{(L_{m+1})^*}\langle \Delta \left( \Phi (\phi) \right) - \Delta \left( \Phi (\psi) \right) , \phi -\psi \rangle_{L_{m+1}}= -(\Phi(\phi)-\Phi(\psi), \phi-\psi)_{L_2} \leq 0,
 $$
 since $\Phi$ is non-decreasing. This completes the proof.
\end{proof}

\begin{theorem}                        \label{thm: well posedness}
Under Assumption \ref{as: coeff} there exists a unique solution of equation \eqref{eq: main equation}. 
\end{theorem}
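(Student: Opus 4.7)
The plan is to apply the Krylov-Rozovskii variational framework, cited as \cite[Theorems 3.6 and 3.8]{KR}, in the Gelfand triple $L_{m+1}(Q) \hookrightarrow H^{-1} \hookrightarrow (L_{m+1}(Q))^*$. What must be verified for the pair $(A_t, M_t = (M^k_t)_{k \geq 1})$ is the standard package of hypotheses: hemicontinuity, (semi-)monotonicity, coercivity and growth. The initial condition and free terms already satisfy the required integrability by Assumption \ref{as: coeff} (\ref{as: free terms}) and (\ref{as:icH-1}).

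Monotonicity is essentially done: \eqref{eq: monotonicity} of Lemma \ref{lem: monotonicity} is exactly the semi-monotonicity assumption in the $H^{-1}$-norm. For coercivity, combine Assumption \ref{as: coeff}\,(\ref{item: coercivity of phi}), which via $r\Phi(r) \geq \lambda|r|^{m+1}-C$ and the identity recalled in the proof of Lemma \ref{lem: monotonicity} yields
\[
{}_{(L_{m+1})^*}\langle \Delta\Phi(\phi),\phi\rangle_{L_{m+1}} = -(\Phi(\phi),\phi)_{L_2} \leq -\lambda\|\phi\|_{L_{m+1}}^{m+1} + C|Q|,
\]
with \eqref{eq: part of coercivity} of Lemma \ref{lem: monotonicity} applied to $H_t\phi+f_t$ and the noise coefficients. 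Adding these gives
\[
2{}_{(L_{m+1})^*}\langle A_t(\phi),\phi\rangle_{L_{m+1}} + \sum_{k=1}^\infty \|M^k_t(\phi)\|^2_{H^{-1}} \leq -2\lambda\|\phi\|^{m+1}_{L_{m+1}} + N\Bigl(1 + \|\phi\|^2_{H^{-1}} + \|f_t\|^2_{H^{-1}} + \sum_{k=1}^\infty \|g^k_t\|^2_{H^{-1}}\Bigr),
\]
which is coercivity with the dominant dissipative term in the $L_{m+1}$-norm.

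For the growth of $A_t$, Assumption \ref{as: coeff}\,(\ref{item: coercivity of phi}) gives $|\Phi(r)| \leq C(|r|^m+1)$, so $\Phi(\phi) \in L_{(m+1)/m}(Q)$ with norm bounded by $N(1+\|\phi\|^m_{L_{m+1}})$; the remaining contributions to $A_t(\phi)$ are linear in $\phi$ and, using continuity of $(-\Delta)^{-1} : H^{-1} \to H^1_0$ together with the embedding $L_{m+1}(Q) \hookrightarrow H^{-1}$, satisfy $\|A_t(\phi)\|_{(L_{m+1})^*} \leq N(1+\|\phi\|^m_{L_{m+1}} + \|f_t\|_{H^{-1}})$. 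The bound on $\sum_k \|M^k_t(\phi)\|^2_{H^{-1}}$ is already contained in \eqref{eq: part of coercivity}. Hemicontinuity, i.e.\ continuity of $s \mapsto {}_{(L_{m+1})^*}\langle A_t(\phi+s\psi),\chi\rangle_{L_{m+1}}$ for $\phi,\psi,\chi \in L_{m+1}(Q)$, reduces to continuity of $s \mapsto \int_Q \Phi(\phi+s\psi)\chi\,dx$, since all other terms are affine in $s$; this follows from the pointwise continuity of $\Phi$, the growth bound $|\Phi(r)| \leq C(|r|^m+1)$ and dominated convergence.

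With all four conditions established, \cite[Theorems 3.6 and 3.8]{KR} yield the existence of a unique $u \in \mathbb{H}^{-1}_{m+1}$ solving \eqref{eq: main equation} in the sense of Definition \ref{def: solution}. The only slightly delicate point in this programme is coercivity, and specifically the cancellation between $-\tfrac{1}{2}\sigma_t^2\|\phi\|^2_{L_2}$ coming from $\langle H_t\phi,\phi\rangle$ and the It\^o correction $\tfrac{1}{2}\sigma_t^2\sum_{i=1}^d \|\partial_i\phi\|^2_{H^{-1}}$; this was already handled in the proof of Lemma \ref{lem: monotonicity} via the convexity of $Q$ and the Grisvard inequality, so at the level of the present theorem the verification reduces to citation.
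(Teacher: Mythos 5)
Your proposal follows exactly the same route as the paper: verify hemicontinuity, monotonicity (via \eqref{eq: monotonicity}), coercivity (combining \eqref{eq: part of coercivity} with Assumption \ref{as: coeff}(\ref{item: coercivity of phi}) and the identity $\langle\Delta\Phi(\phi),\phi\rangle = -(\Phi(\phi),\phi)_{L_2}$), and the growth bound, then invoke \cite[Theorems 3.6 and 3.8]{KR}. The only cosmetic difference is the precise exponent on $\|f_t\|_{H^{-1}}$ in the growth estimate, which is immaterial for the cited theorems.
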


\begin{proof}
It is straightforward to check that the operator $A$ satisfies ($A_1$) (hemi-continuity) from \cite{KR}. The fact that $A$ and $M$ satisfy ($A_2$) (monotonicity) was proved in \eqref{eq: monotonicity}. Coercivity or ($A_3$), follows from \eqref{eq: part of coercivity} combined with \eqref{item: coercivity of phi} of Assumption \ref{as: coeff}, which implies that 
$$
{}_{(L_{m+1})^*}\langle \Delta \left( \Phi(\phi)\right), \phi \rangle_{L_{m+1}} = - \left( \Phi(\phi), \phi\right)_{L_2} \leq  - \lambda \|\phi\|_{L_{m+1}}^{m+1}+C.
$$
For the growth condition ($A_4$) we have, for $v \in L_{m+1}$, 
\begin{align*}
\| \Delta \left( \Phi (v ) \right) + H_tv+f_t\|_{(L_{m+1})^*} & \leq \|\Phi(v)\|_{L_{(m+1)/m}} +  \|H_tv+f_t\|_{(L_{m+1})^*}
\\
& \leq N+ N\|v\|^{m}_{L_{m+1}}+\|H_tv+f_t\|_{(L_{m+1})^*},
\end{align*}
where we have used Assumption \ref{as: coeff}, \eqref{item: coercivity of phi}.
Then, notice  that for $\phi \in C^\infty_c(Q)$
\begin{align*}
&{}_{(L_{m+1})^*}\langle H_tv+f_t, \phi \rangle_{L_{m+1}} \\
=&-  \int_Q \frac{1}{2}\sigma^2_t u  \phi   \, dx -\int_Q u \D_i (b^i_t (-\Delta)^{-1} \phi)\, dx+ \int_Q (c_tu+f_t) (-\Delta)^{-1} \phi \, dx
\\
\leq &N\|u\|_{L_{(m+1)/m}} \|\phi\|_{L_{m+1}}+N \|u\|_{L_2}\|\phi\|_{H^{-1}}+ N\|f\|_{H^{-1}}\|\phi\|_{H^{-1}}\\
\leq & N \left( 1+ \|f_t \|_{H^{-1}}^{2m/(m+1)}+ \|v\|^{m
}_{L_{m+1}} \right) \|\phi\|_{L_{m+1}}.
\end{align*}
Hence, 
$$
\| \Delta \left( \Phi (v ) \right) + H_tv+f_t\|_{(L_{m+1})^*}  \leq N \left( 1+ \|f_t \|_{H^{-1}}^{2m/{m+1}}+ \|v\|^{m}_{L_{m+1}} \right),
$$
where $N$ depends only on $m, K, C, d$ and $|Q|$.  This finishes the verification of the assumptions of \cite[Theorems 3.6 and 3.8]{KR}, an application of which concludes the proof.
\end{proof}

\subsection{Regularity}
In this section we add a viscosity term of magnitude $\varepsilon$ to equation
\eqref{eq: main equation} and  show that the corresponding equation and its solution $u^\varepsilon$ satisfy the assumptions of Theorem \ref{thm: quasilinear nd}, which yields supremum estimates for $u^\varepsilon$ uniformly in $\varepsilon$. Then, we show that $u^\varepsilon$ converges to $u$ and  pass to the limit to obtain the desired estimates for the $u$. 

First, we consider an approximating equation where the non-linear term is Lipschitz continuous, that is, for $\varepsilon>0$, on $Q_T$ 
\begin{equation}                     \label{eq: double approximation}
\begin{aligned}
d u_t&= \left[\Delta \left( \bar{\Phi}(u_t)\right)  + \varepsilon\Delta u_t+Hu_t+f_t\right]dt+ M^k_t(u_t) \,  d\beta^k_t \\
u_0&=\xi,
\end{aligned}
\end{equation}
with zero Dirichlet boundary conditions on $\D Q$.
Let us set 
$$
\mathcal{K}_p := \E\|\xi\|_{L_p}^p +\E \int_0^T \left(\|f\|_{L_p}^p+\| |g|_{l_2}\|^p_{L_p}\right)  \, dt.
$$
As in \cite[Lemma B.1]{BenRoc} we have the following.
\begin{lemma}                                \label{lem: Lipschitz non-linearity}
Assume that Assumption \ref{as: coeff}  \eqref{as: coefficients drift}, \eqref{as: free terms}, \eqref{as:icH-1} is satisfied and  that $\bar{\Phi}: \bR \to \bR$ is a Lipschitz continuous, non-decreasing function with $\bar{\Phi}(0)=0$. Then, equation \eqref{eq: double approximation} has a unique solution $u$ in $\mathbb{H}^{-1}_2$. Moreover, if $K_2< \infty$, then $u \in \mathbb{L}_2$ and for any $p \geq 2$ the following estimate holds
\begin{equation}                             \label{eq: p estimates}
\E \sup_{t \leq T} \|u_t\|^p_{L_p} +  \E \int_0^T \||u|^{(p-2)/2}|\nabla u| \|_{L_2}^2 \, dt \leq  N  \mathcal{K}_p,
\end{equation}
where 
 $N$ is a constant depending only on $\varepsilon, K, d, T$ and $p$.

 \end{lemma}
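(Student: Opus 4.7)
The lemma bundles three separate assertions (existence/uniqueness in $\mathbb{H}^{-1}_2$, the upgrade to $\mathbb{L}_2$ under $\mathcal{K}_2<\infty$, and the $L_p$ bound), and I would establish them in sequence, following the pattern of \cite[Lemma B.1]{BenRoc} and of Theorem~\ref{thm: well posedness} above.

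\emph{Step 1: Existence and uniqueness in $\mathbb{H}^{-1}_2$.} I would apply the variational theory of \cite[Theorems 3.6 and 3.8]{KR} in the Gelfand triple $L_2(Q)\hookrightarrow H^{-1}\hookrightarrow L_2(Q)^*$, exactly as in the proof of Theorem~\ref{thm: well posedness}. Since $\bar\Phi$ is Lipschitz and vanishes at $0$, the operator $A(v)=\Delta\bar\Phi(v)+\varepsilon\Delta v+Hv+f$ maps $L_2$ into $L_2^*$ with linear growth; hemicontinuity is immediate. Monotonicity is verified as in Lemma~\ref{lem: monotonicity}: the quasilinear contribution produces $2(\Delta\bar\Phi(v_1)-\Delta\bar\Phi(v_2),v_1-v_2)_{H^{-1}}=-2(\bar\Phi(v_1)-\bar\Phi(v_2),v_1-v_2)_{L_2}\leq 0$ by the monotonicity of $\bar\Phi$, the viscous term contributes $-2\varepsilon\|v_1-v_2\|_{L_2}^2$, and the first-order and noise parts are absorbed into $N\|v_1-v_2\|_{H^{-1}}^2$ as in \eqref{eq: monotonicity}. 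Coercivity combines \eqref{eq: part of coercivity} with these non-positive contributions and with $-2(\bar\Phi(v),v)_{L_2}\leq 0$. This yields the unique solution $u\in\mathbb{H}^{-1}_2$.

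\emph{Step 2: Upgrade to $\mathbb{L}_2$.} When $\mathcal{K}_2<\infty$, one has $\xi\in L_2(\Omega;L_2(Q))$ and $f,\,g\in L_2(\Omega_T;L_2)$, and I would re-run the same variational machinery in the stronger triple $H^1_0(Q)\hookrightarrow L_2(Q)\hookrightarrow H^{-1}$. In this pivot the leading operator is written in divergence form $\D_i\bigl((\bar\Phi'(v)+\varepsilon+\tfrac12\sigma_t^2)\D_iv\bigr)$ and is uniformly elliptic with constant at least $\varepsilon$, so monotonicity, coercivity and growth are all routine under the Lipschitz/boundedness assumptions on $\bar\Phi$ and the coefficients. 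The solution so obtained lies in $\mathbb{L}_2$ and by uniqueness coincides with the $\mathbb{H}^{-1}_2$-solution of Step~1.

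\emph{Step 3: $L_p$ estimates.} For $p\geq 2$ I would apply an It\^o formula for $\|u_t\|_{L_p}^p$ in the spirit of Lemma~\ref{lem: ito formula}, which is available here because the equation is non-degenerate and $\bar\Phi'$ is bounded. After integration by parts, the leading quasilinear part produces
\begin{equation*}
-p(p-1)\int_Q\bigl(\bar\Phi'(u)+\varepsilon+\tfrac12\sigma_t^2\bigr)|u|^{p-2}|\nabla u|^2\,dx,
\end{equation*}
whose $\bar\Phi'$-factor is non-negative and can be discarded, while the $\tfrac12\sigma_t^2$-part is exactly cancelled by the It\^o correction coming from the noise $\sigma_t\D_iu\,d\tilde\beta^i_t$, leaving the clean dissipation $-\varepsilon p(p-1)\int|u|^{p-2}|\nabla u|^2\,dx\sim -\int|\nabla|u|^{p/2}|^2\,dx$. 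The remaining first-order, zeroth-order and It\^o-correction terms involving $b^i,c,\nu,f,g$ are absorbed into $N(\|u\|_{L_p}^p+\|f\|_{L_p}^p+\||g|_{l_2}\|_{L_p}^p)$ via H\"older and Young under Assumption~\ref{as: coeff}. Burkholder--Davis--Gundy on the martingale part, localization, and Gronwall then deliver \eqref{eq: p estimates}, with a constant $N$ that inevitably depends on $\varepsilon$ since the $\varepsilon$-dissipation was used to absorb the quadratic gradient contributions.

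\emph{Main obstacle.} The principal technical point is the rigorous justification of the It\^o formula for $\|u_t\|_{L_p}^p$ when $p>2$, as $r\mapsto|r|^p$ has only $C^2$ regularity and $u$ is not smooth. The standard remedy, as used in the proof of Lemma~\ref{lem: ito formula} and in \cite[Lemma 2]{KOMA}, is a simultaneous mollification of $|\cdot|^p$ and of the coefficients together with a stopping-time localization, the uniform $\varepsilon$-ellipticity supplying the a priori bounds required to pass to the limit.
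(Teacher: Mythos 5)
Your Step 1 matches the paper (apply Theorem~\ref{thm: well posedness}, here with $m=1$), and Step 3 is essentially what the paper does (the paper simply points to the proof of Lemma~2 of \cite{KOMA}, keeping $\bar\Phi'\geq 0$; your description of the cancellation of the $\tfrac12\sigma^2$ Stratonovich correction and the sign of $\bar\Phi'$ is accurate). However, Step~2 has a genuine gap.

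You propose to obtain $u\in\mathbb{L}_2$ by ``re-running the variational machinery in the stronger triple $H^1_0\hookrightarrow L_2\hookrightarrow H^{-1}$'' and claim that ``monotonicity \dots is routine'' there. It is not: the operator $v\mapsto\Delta\bar\Phi(v)$ is \emph{not} monotone from $H^1_0$ to $H^{-1}$ under the $L_2$ pivot. Indeed,
\begin{equation*}
{}_{H^{-1}}\langle \Delta\bar\Phi(v_1)-\Delta\bar\Phi(v_2),\,v_1-v_2\rangle_{H^1_0}
= -\int_Q \bigl(\bar\Phi'(v_1)\nabla v_1-\bar\Phi'(v_2)\nabla v_2\bigr)\cdot\nabla(v_1-v_2)\,dx,
\end{equation*}
and after writing $\bar\Phi'(v_1)\nabla v_1-\bar\Phi'(v_2)\nabla v_2=\bar\Phi'(v_1)\nabla(v_1-v_2)+\bigl(\bar\Phi'(v_1)-\bar\Phi'(v_2)\bigr)\nabla v_2$ one is left with the cross term $\int_Q\bigl(\bar\Phi'(v_1)-\bar\Phi'(v_2)\bigr)\,\nabla v_2\cdot\nabla(v_1-v_2)\,dx$, which is of indefinite sign and cannot be absorbed by $N\|v_1-v_2\|_{L_2}^2+\delta\|\nabla(v_1-v_2)\|_{L_2}^2$ under the stated hypotheses ($\bar\Phi$ Lipschitz only, so $\bar\Phi'$ merely bounded, not Lipschitz, and $\nabla v_2$ merely in $L_2$). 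This is the well-known reason why quasilinear equations of porous-medium type are treated variationally in the triple $L_{m+1}\hookrightarrow H^{-1}\hookrightarrow L_{m+1}^*$ and not in $H^1_0\hookrightarrow L_2\hookrightarrow H^{-1}$; adding $\varepsilon\Delta$ gives uniform ellipticity but does not repair the failure of monotonicity of the $v$-dependent coefficient.

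The paper avoids this by a Galerkin argument, not a second invocation of the monotone-operator existence theorem. It projects onto the eigenbasis $(e_i)$ of $-\Delta$ in $H^{-1}$, notes that the restriction of $\Pi_n$ to $L_2$ is again the $L_2$-orthogonal projection onto $\mathrm{span}\{e_1,\dots,e_n\}$, and applies It\^o's formula to $\|u^n_t\|_{L_2}^2$ on the (finite-dimensional) Galerkin level. There the nonlinear term appears only paired against $u^n$ itself, giving $-2\int_Q\bar\Phi'(u^n)|\nabla u^n|^2\,dx\leq 0$ --- a sign, not a monotonicity estimate in two arguments --- and the uniform $H^1_0$ bound $\E\int_0^T\|u^n\|_{H^1_0}^2\,dt\leq N\mathcal{K}_2$ follows, passes to the weak $L_2(\Omega_T;L_2)$-limit $u$, and is then combined with \cite[Theorem 2.16]{KR} to get the continuous $L_2$-valued version. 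You should replace your Step 2 by this Galerkin/a-priori-estimate argument (or by a similar two-step approximation as in \cite[Lemma B.1]{BenRoc}); simply changing the Gelfand triple and appealing again to \cite[Theorems 3.6, 3.8]{KR} does not go through.
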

\begin{proof}
The existence and uniqueness of solutions in $\mathbb{H}^{-1}_2$ follows from Theorem \ref{thm: well posedness}. Therefore, we only have to show that $u \in \mathbb{L}_2$ and \eqref{eq: p estimates} under the assumption that $\mathcal{K}_2 < \infty$. Let $(e_i)_{i=1}^\infty$ be an orthonormal basis of $H^{-1}$
consisting of eigenvectors of $-\Delta$ and let $\Pi_n : H^{-1} \to \text{span}\{ e_1, ..., e_n\}$ be the orthogonal projection onto the span of the first $n$ eigenvectors.
Consider the Galerkin approximation
\begin{equation}                     \label{eq: Galerkin approximation}
\begin{aligned}
d u^n_t&= \Pi_n \left[ \Delta \left( \bar{\Phi}(u^n_t)\right)  + \varepsilon\Delta u^n_t+Hu^n_t+f_t\right] dt+ \Pi_n  M^k_t(u^n_t) \,  d\beta^k_t \\
u^n_0&=\Pi_n \xi.
\end{aligned}
\end{equation}
Under the assumptions of the lemma, it is very well known from the theory of stochastic evolution equations (see \cite{KR}) that the Galerkin scheme above  has a unique solution $u^n$ which converges 
 weakly in $L_2( \Omega_T, \mathcal{P}; L_2(Q))$ to $u$ (in fact, this is how the solution $u$ is constructed). Notice that the restriction of $\Pi_n$ to $L_2$ is again the orthogonal projection (in $L_2$) onto $\text{span}\{ e_1, ..., e_n\}$. Consequently, for any $\phi, \psi \in C_c^\infty$ we have $-(\phi, \Pi_n \D_i \psi)_{L_2}= (\D_i \Pi_n \phi,  \psi)_{L_2}$ which remains true for $\phi ,\psi \in L_2$. Hence, by It\^o's formula, we have 
\begin{align*}
\|u^n_t\|_{L_2}^2 \leq  \|\xi\|_{L_2}^2& - \int_0^t 2\left( \D_j u^n,  \D_j  \left( \bar{\Phi}(u^n) \right)  + \varepsilon \D_j u^n+ \frac{\sigma^2}{2}\D_j u^n \right)_{L_2} \, ds
\\
& +\int_0^t  2 \left( b^i \D_i u^n+c u^n+f, u^n\right)_{L_2} \, ds
\\
&+ \int_0^t\left( \sum_{i=1}^d \sigma^2  \| \D_iu^n\|_{L_2}^2+  \sum_{k=1}^\infty \|\nu^ku^n+g^k\|^2_{L_2}\right) \, ds
\\
&+ \int_0^t 2 \left( M^k u^n+g^k, u^n\right) \, d \beta^k_s.
\end{align*}
Since $ \bar{ \Phi}$ is a non-decreasing Lipschitz continuous function, we have 
$$
\left( \D_j u^n,  \D_j \left(  \bar{\Phi}(u^n) \right) \right)_{L_2} \geq 0.
$$
It then follows by standard arguments (see, e.g., the proof of Theorem 4 in \cite{ROZ}) that 
$$
\E\int_0^T \| u^n\|^2_{H^1_0} \, dt \leq N \mathcal{K}_2, 
$$
where $N$ depends only on $\varepsilon, K, d,$ and  $T$. Since the Galerkin approximation $u^n$ converges weakly in $L_2( \Omega_T, \mathcal{P}; L_2(Q))$ to $u$, taking $\liminf$ as $n \to \infty$ in the above inequality  gives
$$
\E\int_0^T \| u\|^2_{H^1_0} \, dt \leq N \mathcal{K}_2.
$$
Moreover, since $u \in L_2(\Omega_T, \mathcal{P}; H^1_0(Q))$ and satisfies \eqref{eq: double approximation}, it follows (see \cite[Theorem 2.16]{KR}) that is has a continuous $L_2$-valued version which implies that $u \in \mathbb{L}_2$.  From here, one can deduce \eqref{eq: p estimates} by following step by step the proof of Lemma 2 from \cite{KOMA}, keeping in mind that $\bar{\Phi}'(u) \geq 0$. 

\end{proof} 
We use the previous result to obtain the required regularity for the solution of the SPME in the presence of a non-degenerate viscosity term,
\begin{equation}                \label{eq: single approximation}
\begin{aligned}
du_t& =\left[ \Delta \left( \Phi (u) \right) +\varepsilon \Delta u_t +H_tu_t +f_t\right] \, dt +  M^k_t (u_t)   d\beta^k_t \\
u_0&= \xi.
\end{aligned}
\end{equation}
\begin{lemma}                \label{lem: regularity}
Suppose that Assumption \ref{as: coeff} holds. Then, there exists a unique $\mathbb{H}^{-1}_{m+1}$-solution of equation \eqref{eq: single approximation}. If $\xi \in L_{m+1}(\Omega; L_{m+1}(Q))$,  $ f \in L_{m+1}(\Omega_T; L_{m+1}(Q))$, and $g \in L_{m+1}(\Omega_T; L_{m+1}(Q; l_2))$, then we have that $u, \Phi(u) \in L_2(\Omega_T; H^1_0)$ and $\nabla \Phi (u) = \Phi'(u) \nabla u$. In particular, $u \in \mathbb{L}_2$. 
\end{lemma}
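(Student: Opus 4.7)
The plan is to combine Theorem \ref{thm: well posedness} (for existence and uniqueness in $\mathbb{H}^{-1}_{m+1}$) with a Lipschitz approximation of $\Phi$ (handled by Lemma \ref{lem: Lipschitz non-linearity}) and a dedicated energy estimate for $\Phi(u)$. First I would absorb the viscosity into the principal nonlinearity: setting $\tilde{\Phi}(r) := \Phi(r) + \varepsilon r$, the function $\tilde{\Phi}$ is $C^1$, non-decreasing, satisfies $\tilde{\Phi}(0) = 0$, still obeys the coercivity $r \tilde{\Phi}(r) \ge \lambda |r|^{m+1} - C$ and the growth $|\tilde{\Phi}'(r)| \le C|r|^{m-1} + (C + \varepsilon)$. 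Hence Assumption \ref{as: coeff} holds with $\tilde{\Phi}$ in place of $\Phi$, and Theorem \ref{thm: well posedness} produces the required unique $u \in \mathbb{H}^{-1}_{m+1}$ solving \eqref{eq: single approximation}.

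Assume now the additional $L_{m+1}$-integrability of $\xi, f, g$. I would approximate $\Phi$ by the Lipschitz, non-decreasing functions $\Phi_n(r) := \int_0^r (\Phi'(s) \wedge n)\,ds$, which vanish at zero, converge locally uniformly to $\Phi$, and obey $|\Phi_n(r)| \le C|r|^m + C|r|$ uniformly in $n$. Lemma \ref{lem: Lipschitz non-linearity} applied with $\bar{\Phi} = \Phi_n$ then delivers $u^n \in \mathbb{L}_2$ solving \eqref{eq: single approximation} with $\Phi_n$ replacing $\Phi$, together with the estimate \eqref{eq: p estimates} for $p = 2$ and $p = m+1$, uniformly in $n$.

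The crux is a further energy estimate controlling $\nabla \Phi_n(u^n)$. For this I would apply It\^o's formula to the convex functional $v \mapsto \int_Q \tilde{\Psi}_n(v)\,dx$, where $\tilde{\Psi}_n(r) := \int_0^r \Phi_n(s)\,ds$ (admissible since $\tilde{\Psi}_n'' = \Phi_n'$ is bounded by $n$). The principal operator contributes the good term $\int |\nabla \Phi_n(u^n)|^2\,dx$; the viscosity contributes the non-negative quantity $\varepsilon \int \Phi_n'(u^n)|\nabla u^n|^2\,dx$; the transport noise and its Stratonovich corrector $\tfrac{1}{2}\sigma^2 \Delta u^n$ produce cancelling $|\nabla u^n|^2$-contributions; the remaining pairings with $H u^n + f$ and the multiplicative noise $\nu^k u^n + g^k$ are estimated by Young's and H\"older's inequalities using $|\Phi_n(u^n)| \le C|u^n|^m + C|u^n|$, $|\Phi_n'(u^n)| \le C|u^n|^{m-1} + C$, the uniform $L_{m+1}$-bound from Step 2, and the $L_{m+1}$-integrability of $f, g$. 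This yields
\begin{equation*}
\E \int_0^T \|\nabla \Phi_n(u^n)\|_{L_2}^2\,dt + \E \int_0^T \|\nabla u^n\|_{L_2}^2\,dt \le N(\varepsilon),
\end{equation*}
uniformly in $n$.

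To conclude I would pass to the limit: weak compactness yields $u^n \rightharpoonup u^*$ and $\Phi_n(u^n) \rightharpoonup w$ in $L_2(\Omega_T; H^1_0)$ along a subsequence. It\^o applied to $\|u^n - u^{n'}\|_{H^{-1}}^2$, together with the monotonicity in \eqref{eq: monotonicity} and the viscosity-induced contribution $-2\varepsilon \|u^n - u^{n'}\|_{L_2}^2$, forces $u^n$ to be Cauchy in $L_2(\Omega_T \times Q)$; hence $u^n \to u^*$ in measure, and combined with the local uniform convergence $\Phi_n \to \Phi$ and equi-integrability of $\Phi_n(u^n)$ in $L_{(m+1)/m}$, this identifies $w = \Phi(u^*)$. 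Uniqueness from Step 1 forces $u^* = u$, giving $u, \Phi(u) \in L_2(\Omega_T; H^1_0)$; the identity $\nabla \Phi(u) = \Phi'(u) \nabla u$ is the standard chain rule for $C^1$ maps composed with Sobolev functions once all quantities are locally integrable. Finally, $u \in \mathbb{L}_2$ follows from \eqref{eq: single approximation} combined with $u \in L_2(\Omega_T; H^1_0)$ exactly as at the end of Lemma \ref{lem: Lipschitz non-linearity}. The hardest point, as customary for SPME, is the identification $w = \Phi(u)$: this is precisely why the $L_{m+1}$-estimate (rather than merely an $L_2$-estimate) is indispensable in Step 2, since it supplies strong convergence on a space matched to the polynomial growth of $\Phi$.
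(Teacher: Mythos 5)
Your proposal follows the paper's architecture up to a point — existence/uniqueness via Theorem \ref{thm: well posedness}, approximation by $\Phi_n(r)=\int_0^r(\Phi'(s)\wedge n)\,ds$ with Lemma \ref{lem: Lipschitz non-linearity}, the It\^o energy estimate for $\Psi_n(u^n)=\int\Phi_n(s)ds$ giving a uniform $\E\int_0^T\|\nabla\Phi_n(u^n)\|_{L_2}^2\,dt$ bound, and weak compactness. The divergence, and the gap, is in how you identify the weak limit.

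You claim that It\^o applied to $\|u^n_t-u^{n'}_t\|_{H^{-1}}^2$, the monotonicity \eqref{eq: monotonicity}, and the viscosity term $-2\varepsilon\|u^n-u^{n'}\|_{L_2}^2$ force $u^n$ to be Cauchy in $L_2(\Omega_T\times Q)$. But \eqref{eq: monotonicity} is stated for a \emph{single} operator compared with itself; here the two drifts carry \emph{different} truncations $\Phi_n$, $\Phi_{n'}$. The usual splitting
\begin{equation*}
\Phi_n(u^n)-\Phi_{n'}(u^{n'}) = \bigl(\Phi_n(u^n)-\Phi_n(u^{n'})\bigr) + \bigl(\Phi_n(u^{n'})-\Phi_{n'}(u^{n'})\bigr)
\end{equation*}
gives a good sign for the first bracket, but leaves the error term $\int_Q\bigl(\Phi_n(u^{n'})-\Phi_{n'}(u^{n'})\bigr)(u^n-u^{n'})\,dx$. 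Absorbing $\|u^n-u^{n'}\|_{L_2}$ into the viscosity via Young forces you to bound $\E\int_0^T\|\Phi_n(u^{n'})-\Phi_{n'}(u^{n'})\|_{L_2}^2\,dt$; since $\Phi_n-\Phi_{n'}$ is supported where $|u^{n'}|\gtrsim n^{1/(m-1)}$ and grows like $|r|^m$, this requires controlling $\E\int_{\{|u^{n'}|\text{ large}\}}|u^{n'}|^{2m}\,dx\,dt$, i.e.\ equi-integrability of $|u^{n'}|^{2m}$. But the only uniform moment available from Lemma \ref{lem: Lipschitz non-linearity} under the $L_{m+1}$-data hypothesis is $\E\|u^n\|_{L_{m+1}(Q_T)}^{m+1}$, and $2m>m+1$ for $m>1$; there is no uniform integrability of that order. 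Pairing instead in $L_{(m+1)/m}\times L_{m+1}$ does not help because the viscosity only produces a coercive $L_2$-norm of the difference, not $L_{m+1}$. So the claimed Cauchy property does not follow as stated, and with it the strong-convergence route to the identification $w=\Phi(u^*)$ breaks.

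The paper avoids this entirely: instead of strong convergence, it uses the Minty--Browder / pseudo-monotonicity device in expectation. One forms
\begin{equation*}
\mathcal{O}_n := \E\int_0^T e^{-ct}\Bigl(2\,{}_{(L_{m+1})^*}\langle A^n(u^n)-A(y),u^n-y\rangle_{L_{m+1}} + \sum_k\|M^k(u^n)-M^k(y)\|_{H^{-1}}^2 - c\|u^n-y\|_{H^{-1}}^2\Bigr)dt,
\end{equation*}
shows $\limsup_n\mathcal{O}_n\le 0$ via Lemma \ref{lem: monotonicity} plus the strong $L_{m+1}$-convergence $\Phi_n(y)\to\Phi(y)$ for a \emph{fixed} test process $y$, rewrites $\mathcal{O}_n$ through It\^o for $\|u^n_T\|_{H^{-1}}^2$, passes to the weak limit, and finally takes $y=v-\lambda z$ and sends $\lambda\to 0$ using hemicontinuity to conclude $\Delta\eta=\Delta\Phi(v)$. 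The key point is that the approximation error $\Phi_n(y)-\Phi(y)$ is now evaluated on a fixed $y$, so dominated convergence suffices — no uniform integrability of $|u^n|$ beyond $L_{m+1}$ is ever needed. You should replace your Cauchy argument with this monotonicity trick; the remaining steps of your proposal (absorbing the viscosity into $\tilde\Phi$, the $\Psi_n$-energy estimate, the concluding chain rule and $u\in\mathbb{L}_2$) are in line with the paper.
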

\begin{proof}
The fact that \eqref{eq: single approximation} has a unique $\mathbb{H}^{-1}_{m+1}$-solution follows from Theorem \ref{thm: well posedness}. For the remaining properties, let us consider the approximation
\begin{equation}        \label{eq: approximation with Lipschitz}
\begin{aligned}
du^n_t& =\left[ \Delta \left( \Phi _n(u^n_t)\right)+\varepsilon \Delta u^n_t +H_tu^n_t +f_t\right] \, dt +   M^k_t (u^n_t) \,  d\beta^k_t \\
u_0&= \xi,
\end{aligned}
\end{equation}
where for $n \in \mathbb{N}$, $\Phi_n: \bR \to \bR$ is defined by 
$$
\Phi_n(r)= \int_0^r \min\{\Phi'(s), n\} \, ds.
$$
 By Lemma \ref{lem: Lipschitz non-linearity}, equation \eqref{eq: approximation with Lipschitz} has a unique solution $u^n$ in $\mathbb{H}^{-1}_2$ which moreover belongs to $\mathbb{L}_2$, and for all 
 $q \in [2, m+1]$ we have 
\begin{equation}            \label{eq: stability in Lp}
\begin{aligned}
&\E \sup_{t \leq T} \| u^n \|^q_{L_q}+ \E  \int_0^T\int_Q  | u^n |^{q-2}|\nabla u^n |^2 \, dx dt  \\
&\leq N  \left( \E \|\xi\|^q_{L_q} + \E \int_0^T \| f\|^q_{L_q} + \| |g|_{l_2} \|_{L_q}^q \, dt  \right)< \infty,
\end{aligned}
\end{equation}
where $N$ depends only on $K, T, d, q$ and $\varepsilon$. 

 Let $\Psi_n(r) = \int_0^r \Phi_n(s) \, ds$. By Theorem 3.1 in \cite{KRITO} we have 
\begin{align}                         \nonumber
\int_Q \Psi_n (u^n_t) \,  dx &= \int_Q \Psi_n (\xi) \,  dx- \int_0^t \int_Q | \nabla \Phi_n(u^n) |^2 + \varepsilon\Phi'(u^n) |\nabla u^n|^2 \, dx ds \\   \nonumber
&+ \int_0^t (  b^i\D_iu^n+cu^n+f, \Phi_n (u^n) )_{L_2} \, ds   \\   
 \nonumber
 &+\int_0^t \frac{1}{2}(\Phi'_n(u^n),| \nu u^n+g|^2_{l_2}) _{L_2}\, ds
 \\
  \label{eq: Ito psi_n}
 & + \int_0^t ( M^k(u^n) , \Phi_n(u^n) )_{L_2} \, d\beta^k_s.
\end{align}
Notice that by Assumption \ref{as: coeff}   we have
\begin{align*}
 -\Phi'_n(u^n) |\nabla u^n|^2   \leq &  0,
\\
 (cu^n+f, \Phi_n(u^n) )_{L_2}  \leq & N ( \|u^n\|^{m+1}_{L_{m+1}}+\|f\|^{m+1}_{L_{m+1}} )
\\
( b^i\D_iu^n, \Phi_n(u^n))_{L_2}=&-((\D_ib^i)u^n, \Phi_n(u^n))_{L_2}- ( b^iu^n, \D_i\Phi_n(u^n))_{L_2}
\\
&\leq N(1+\|u^n\|_{L_{m+1}}^{m+1})+\frac{1}{2}\|\nabla \Phi_n(u^n)\|_{L_2}^2
 \end{align*}
 and 
\begin{align*}
(\Phi'_n(u^n),| \nu u^n+g|^2_{l_2})_{L_2}
& \leq N ( 1+\|u^n\|^{m+1}_{L_{m+1}}+\||g|_{l_2}\|^{m+1}_{L_{m+1}}),
 \end{align*}
 for a constant $N$ depending only on $C,K,d,$ and $|Q|$. Hence, after a localization argument we obtain for all $t \in [0,T]$
 \begin{align}
\nonumber               
&\E\int_Q \Psi_n (u^n_t) \,  dx +\E \int_0^t \| \nabla \Phi_n(u^n) \|_{L_2}^2\, dt 
\\
\label{eq:RemarkLater}
&\leq N \E \left(1+  \|\xi\|^{m+1}_{L_{m+1}} +  \int_0^t \| f\|^{m+1}_{L_{m+1}} + \| |g|_{l_2} \|_{L_{m+1}}^{m+1}  +\|u^n\|^{m+1}_{L_{m+1}}\, dt \right),
\end{align}
for a constant $N$ depending only on $C,K,d,$ and $|Q|$,
which in particular gives by \eqref{eq: stability in Lp}
\begin{align}
\nonumber               
&\E \int_0^T \| \nabla \Phi_n(u^n) \|_{L_2}^2\, dt 
\\
\label{eq: stability phi_n}
&\leq N \E \left(1+  \|\xi\|^{m+1}_{L_{m+1}} +  \int_0^T \| f\|^{m+1}_{L_{m+1}} + \| |g|_{l_2} \|_{L_{m+1}}^{m+1}  \, dt \right)< \infty,
\end{align}
where $N$ depends only on $K, T,d ,m,C, |Q|$ and $\varepsilon$.  By \eqref{eq: stability in Lp} and \eqref{eq: stability phi_n} we have for a (non-relabeled) subsequence   
\begin{equation}                        \label{eq: weak converg Lp}
\begin{aligned}                    
u^n &\rightharpoonup v  \ \text{in}  \ L_2(\Omega_T ; H^1_0(Q)), \\
u^n &\rightharpoonup v  \ \text{in}  \ L_{m+1}(\Omega_T ; L_{m+1}(Q)), \\
\Phi_n(u_n) &\rightharpoonup \eta  \ \text{in}  \ L_2(\Omega_T ; H^1_0(Q)),   \\
u^n_T  &\rightharpoonup u^ \infty\ \  \text{in}  \ L_{m+1}(\Omega; L_{m+1}(Q)),
\end{aligned}
\end{equation}
for some $v$, $\eta$ and $u^ \infty$.  Recall that we want to show that $u, \Phi(u) \in L_2(\Omega_T; H^1_0)$. For this, we will show that $u=v$ and $\Phi(v)= \eta$ by using standard techniques from the theory of monotone operators (see, e.g., \cite{KR}).  Notice that by \eqref{eq: weak converg Lp} we also have 
\begin{equation}                        \label{eq: weak converg H -1}
\begin{aligned}                    
u^n &\rightharpoonup v  \ \text{in}  \ L_2(\Omega_T ; H^{-1}), \\
\Delta \Phi_n(u_n) &\rightharpoonup \Delta \eta  \ \text{in}  \ L_2(\Omega_T ; H^{-1}),   \\
u^n_T  &\rightharpoonup u^ \infty\ \  \text{in}  \ L_2(\Omega; H^{-1}).
\end{aligned}
\end{equation}
As in Section 3.5 in \cite{KR}, by passing to the weak limit in \eqref{eq: approximation with Lipschitz} we have in $H^{-1}$ for almost all $(\omega, t)$
\begin{equation}                \label{eq: equality after limit}
v_t =\xi+\int_0^t \left( \Delta \eta +\varepsilon \Delta v +Hv +f\right) \, ds +  \int_0^t  M^k (v) \, d\beta^k_s,
\end{equation}
and, almost surely,
\begin{equation}
u^\infty =\xi+ \int_0^T \left( \Delta \eta +\varepsilon \Delta v +Hv +f\right) \, ds +  \int_0^T  M^k (v) \, d\beta^k_s.
\end{equation}
Hence, we can choose a version of $v$ that is  a continuous,  adapted, $H^{-1}$-valued process. It follows that \eqref{eq: equality after limit} holds  for all $t \in [0,T]$ on a set of probability one  and that almost surely $v_T= u^\infty$. To ease the notation let us set 
\begin{align*}
A^n_t(\varphi)&:=\Delta \left( \Phi _n(\varphi)\right)+\varepsilon \Delta \varphi +H_t \varphi  +f_t
\\
A_t(\varphi)&:=\Delta \left( \Phi (\varphi)\right) +\varepsilon \Delta \varphi +H_t \varphi  +f_t
\end{align*}
for $\varphi \in L_{m+1}(Q)$. Let $y$ be a predictable $L_{m+1}(Q)$-valued process, such that 
$$
\E \int_0^T \| y\|^{m+1}_{L_{m+1}}  \, dt < \infty.
$$
For $c>0$ we set 
\begin{align}
\nonumber
\mathcal{O}_n:= & \E \int_0^T e^{-ct}2 {}_{(L_{m+1})^*}\langle A^n(u^n)-A(y), u^n-y \rangle_{L_{m+1}} \, dt 
\\
\nonumber
 +& \E \int_0^T e^{-ct}\sum_{k=1}^\infty\|M^k (u^n)- M^k( y) \|^2_{H^{-1}} \, dt -\E \int_0^T c e^{-ct}\|u^n - y\|^2_{H^{-1}}  \, dt.
\end{align}
Notice that due to Lemma \ref{lem: monotonicity} we have for $c>0$ large enough (independent of $n$)
\begin{equation*}
\begin{aligned}
\mathcal{O}_n&= \E \int_0^T 2 e^{-ct} {}_{(L_{m+1})^*}\langle A^n(u^n)-A^n(y), u^n-y \rangle_{L_{m+1}} \, dt
\\
& +\E \int_0^T  e^{-ct} \sum_{k=1}^\infty\|M^k (u^n)- M^k (y) \|^2_{H^{-1}}  \, dt \\
& - \E \int_0^T c e^{-ct}\|u^n - y\|^2_{H^{-1}}  \, dt \\
&+\E \int_0^T e^{-ct} 2{}_{(L_{m+1})^*}\langle A^n(y)-A(y) , u^n-y \rangle_{L_{m+1}} \, dt 
\\
& \leq \E \int_0^T  e^{-ct}2{}_{(L_{m+1})^*}\langle A^n(y)-A(y) , u^n-y \rangle_{L_{m+1}} \, dt.
\end{aligned}
\end{equation*}
Moreover, one can easily see that by the properties of $\Phi_n$ we have that $ \Phi_n (y) \to \Phi(y)$ strongly in $L_{m+1}(\Omega_T; L_{m+1}(Q))$, which combined with \eqref{eq: weak converg Lp} gives 
$$
\lim_{n \to \infty} \E \int_0^T e^{-ct} 2{}_{(L_{m+1})^*}\langle A^n(y)-A(y) , u^n-y \rangle_{L_{m+1}} \, dt =0.
$$
Consequently, we have 
\begin{equation}                     \label{eq: limsup On}
\limsup_{ n \to  \infty} \mathcal{O}_n \leq 0.
\end{equation}
We also set 
\begin{equation*}
\mathcal{O}^1_n:= \E \int_0^T e^{-ct} \left( 2 {}_{(L_{m+1})^*}\langle A^n(u^n), u^n \rangle_{L_{m+1}} +\sum_{k=1}^\infty \|M^k (u^n) \|^2_{H^{-1}} - c \| u^n\|^2_{H^{-1}}\right)  \, dt 
\end{equation*}
and $\mathcal{O}_n^2= \mathcal{O}_n-\mathcal{O}_n^1$. By It\^o's formula (see \cite[Theorem 2.17]{KR}) we have 
\begin{align*}
&e^{-cT}\|u^n_T\|_{H^{-1}}^2-\|\xi\|_{H^{-1}}^2 
\\
&= \int_0^T e^{-ct}
\left( 2 {}_{(L_{m+1})^*}\langle A^n(u^n), u^n \rangle_{L_{m+1}} +\sum_{k=1}^\infty \|M^k (u^n) \|^2_{H^{-1}} \right)\, dt \\
&-\int_0^T e^{-ct}c \| u^n\|^2_{H^{-1}} \, dt + \int_0^T e^{-ct} (M^k(u^n), u^n)_{H^{-1}} \, d\beta^k_t.
\end{align*}
By the estimates in \eqref{eq: stability in Lp} one can easily see that 
$$
\E \left( \int_0^T \sum_{k=1}^\infty(M^k(u^n),u^n)_{H^{-1}}^2 \, dt \right)^{1/2}< \infty,
$$
which implies that the expectation of the last term at the right hand side of the above equality vanishes. Hence,
\begin{equation*}
\mathcal{O}_n^1= \E e^{-cT}\|u^n_T\|_{H^{-1}}^2-\E \|\xi\|_{H^{-1}}^2,
\end{equation*}
from which we get that 
\begin{equation}
\limsup_{n \to \infty} \mathcal{O}_n^1= \E e^{-cT}\|v_T\|_{H^{-1}}^2-\E \|\xi\|_{H^{-1}}^2+ \delta e^{-cT}
\end{equation}
with $\delta:= \limsup_{n \to \infty }\E\|u^n_T\|_{H^{-1}}^2-\E\|v_T\|_{H^{-1}}^2 \geq 0$, due to \eqref{eq: weak converg H -1}. On the other hand, by \eqref{eq: weak converg Lp} and \eqref{eq: stability in Lp} 
it follows that the quantity 
$$
\E \esssup_{t \in [0,T]} \|v_t\|^2_{L_2}+ \E \int_0^T\|\nabla v \|^2_{L_2} \, dt
$$
can be estimated by the right hand side of \eqref{eq: stability in Lp} with $q=2$. In particular, this implies that 
$$
\E \left( \int_0^T \sum_{k=1}^\infty(M^k(v),v)_{H^{-1}}^2 \, dt \right)^{1/2}< \infty.
$$
Hence, by \eqref{eq: equality after limit} and It\^o's formula we obtain
\begin{align*}
\E e^{-cT}\|v_T\|_{H^{-1}}^2 &= \E \|\xi\|_{H^{-1}}^2 
\\
&+ \E \int_0^T e^{-ct}
2 {}_{(L_{m+1})^*}\langle \Delta \eta +\varepsilon \Delta v +Hv +f, v \rangle_{L_{m+1}} \, dt 
\\
&+\E \int_0^T e^{-ct}\sum_{k=1}^\infty \|M^k(v) \|^2_{H^{-1}} \, dt 
-\E \int_0^T e^{-ct}c \| v\|^2_{H^{-1}} \, dt.
\end{align*}
Hence,
\begin{align}
\nonumber
\limsup_{n \to \infty} \mathcal{O}_n^1&=\E \int_0^T e^{-ct}
2 {}_{(L_{m+1})^*}\langle \Delta \eta +\varepsilon \Delta v +Hv +f, v \rangle_{L_{m+1}} \, dt 
\\
&+\E \int_0^T e^{-ct}\sum_{k=1}^\infty \|M^k (v) \|^2_{H^{-1}} \, dt 
-\E \int_0^T e^{-ct}c \| v\|^2_{H^{-1}} \, dt+ \delta e^{-cT}.
\end{align}
Moreover, by \eqref{eq: weak converg Lp} we have 
\begin{align}
\nonumber
\lim_{n \to \infty} \mathcal{O}^2_n &=\E \int_0^T e^{-ct}
2 \left( {}_{(L_{m+1})^*}\langle A(y), y \rangle_{L_{m+1}}-{}_{(L_{m+1})^*}\langle A(y), v \rangle_{L_{m+1}} \right)  \, dt \\
\nonumber
&-\E \int_0^T e^{-ct}
2 {}_{(L_{m+1})^*}\langle \Delta \eta +\varepsilon\Delta v +Hv+f, y \rangle_{L_{m+1}} \, dt
\\
\nonumber
&+\E \int_0^T e^{-ct}\left(\sum_{k=1}^\infty  \|M^k (y) \|^2_{H^{-1}}- 2( M^k (y), M^k(v))_{H^{-1}} \right)\, dt 
\\
&+\E \int_0^T e^{-ct}c \left(2(v, y)_{H^{-1}}- \| y\|^2_{H^{-1}}\right) \, dt.
\end{align}
Consequently,
\begin{align}
\nonumber
&\E \int_0^T e^{-ct}2 {}_{(L_{m+1})^*}\langle \Delta \eta + \varepsilon \Delta v +Hv +f-A(y), v-y \rangle_{L_{m+1}} \, dt \\
\nonumber
 + &\E \int_0^T e^{-ct}2\sum_{k=1}^\infty\|M^k (v)- M^k (y) \|^2_{H^{-1}}  \, dt  \\
 \nonumber
  - &\E \int_0^T c e^{-ct}\|v- y\|^2_{H^{-1}}  \, dt + \delta e^{-cT} = \limsup_{n \to \infty} \mathcal{O}^1_n + \lim_{n \to \infty} \mathcal{O}^2_n
  \\                                 \label{eq: testing}
   =&  \limsup_{n \to \infty} \mathcal{O}_n \leq 0,
\end{align}
by \eqref{eq: limsup On}. By choosing $y=v$ in \eqref{eq: testing} we obtain that $\delta=0$. Moreover, it follows that 
\begin{align*}
\nonumber
&\E \int_0^T e^{-ct}2 {}_{(L_{m+1})^*}\langle \Delta \eta + \varepsilon \Delta v +Hv +f -A(y), v-y \rangle_{L_{m+1}} \, dt \\
- &\E \int_0^T c e^{-ct}\|v - y\|^2_{H^{-1}}  \,  dt \leq 0.
\end{align*}
Let $z$ be a predictable process with values in $L_{m+1}(Q)$ with $\E \int_0^T \|z\|_{L_{m+1}}^{m+1} \, dt < \infty$ and choose in the above inequality $y= v- \lambda z$ for $\lambda > 0$. Then, we have
\begin{align*}
\nonumber
&\E \int_0^T e^{-ct}2 \lambda {}_{(L_{m+1})^*}\langle \Delta \eta + \varepsilon \Delta v +Hv +f -A(v - \lambda z), z \rangle_{L_{m+1}} \, dt \\
- &\E \int_0^T c \lambda^2 e^{-ct}\|z\|^2_{H^{-1}}  \,  dt \leq 0.
\end{align*}
Dividing by $\lambda$, letting $\lambda \to 0$ and using the hemi-continuity property we obtain 
$$
\E \int_0^T {}_{(L_{m+1})^*}\langle \Delta \eta - \Delta \left( \Phi(v)\right) , z \rangle_{L_{m+1}} \, dt \leq 0.
$$
Since $z$ was arbitrary, we have $\Delta \eta = \Delta \left( \Phi (v) \right) $. This shows that $v$ is a solution of \eqref{eq: single approximation}, and by uniqueness, we have $u=v$, $\Phi(u)= \Phi(v) = \eta$. This finishes the proof.
\end{proof}
\begin{remark} \label{rem:H10}
Suppose that Assumption \ref{as: extra condition} also holds and let $u^\varepsilon$ be the solution of  \eqref{eq: single approximation}. By writing It\^o's formula for $\|\Psi(u^\varepsilon_t)\|_{L_1(Q)}$, where $\Psi(r)= \int_0^r \Phi(s) \, ds$, similarly to  \eqref{eq:RemarkLater} and after applying Gronwal's  lemma one has
$$
\E \int_0^T \|\nabla \Phi(u^{\varepsilon})\|_{L_2}^2 \, dt \leq N \E \left(1+  \|\xi\|^{m+1}_{L_{m+1}} +  \int_0^T \| f\|^{m+1}_{L_{m+1}} + \| |g|_{l_2} \|_{L_{m+1}}^{m+1}  \, dt \right),
$$
 where $N$ is independent of $\varepsilon$.
\end{remark}

We will also need the following.

\begin{lemma}           \label{lem: lsc}
Let $u^n$ be real-valued functions on $Q$  such that $u^n \rightharpoonup u$ in $H^{-1}$ for some $u \in H^{-1}$. Then for any $p \in [1, \infty]$ 
$$
\|u\|_{L_p} \leq \liminf_{n \to \infty} \|u^n\|_{L_p}.
$$
\end{lemma}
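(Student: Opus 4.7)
The approach is to upgrade the hypothesized weak convergence in $H^{-1}$ to weak convergence of a subsequence in $L_p$ (or weak-$\ast$ in $L_\infty$), from which the claimed lower semicontinuity follows by the standard weak lower semicontinuity of norms. The plan is to test $u$ against $C_c^\infty(Q)$ functions via the $H^{-1}$-$H^1_0$ pairing and use H\"older's inequality on the approximating sequence. First I would reduce to the case $L := \liminf_n \|u^n\|_{L_p} < \infty$ (otherwise the inequality is vacuous) and extract a subsequence (still denoted by $u^n$) with $\|u^n\|_{L_p} \to L$.

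For any $\phi \in C_c^\infty(Q) \subset H^1_0(Q)$, the weak convergence $u^n \rightharpoonup u$ in $H^{-1}$ together with the identification ${}_{H^{-1}}\langle u^n, \phi \rangle_{H^1_0} = \int_Q u^n \phi \, dx$ (valid since each $u^n$ is a locally integrable function canonically embedded in $H^{-1}$) gives
\[
{}_{H^{-1}}\langle u, \phi \rangle_{H^1_0} = \lim_{n \to \infty} \int_Q u^n \phi \, dx,
\]
and H\"older's inequality applied to the right-hand side yields $|{}_{H^{-1}}\langle u, \phi \rangle_{H^1_0}| \leq L \, \|\phi\|_{L_{p'}(Q)}$, where $p'$ is the H\"older conjugate of $p$. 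For $p \in (1, \infty]$, so that $p' \in [1, \infty)$, the space $C_c^\infty(Q)$ is dense in $L_{p'}(Q)$; hence $\phi \mapsto {}_{H^{-1}}\langle u, \phi \rangle_{H^1_0}$ extends by continuity to a bounded linear functional on $L_{p'}(Q)$ of norm at most $L$. By the duality $L_{p'}(Q)^* \cong L_p(Q)$, this functional is represented by integration against some $v \in L_p(Q)$ with $\|v\|_{L_p} \leq L$. Since $\int v \phi = {}_{H^{-1}}\langle u, \phi \rangle_{H^1_0}$ for all $\phi \in C_c^\infty(Q)$, $v$ and $u$ agree as distributions, so $u$ is represented by this $L_p$-function and $\|u\|_{L_p} \leq L$, as required.

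The main technical subtlety is the identification of the $H^{-1}$-$H^1_0$ duality pairing with the ordinary integral $\int u^n \phi \, dx$ when $u^n \in L_p$ but potentially $u^n \notin L_2$; this is standard via the continuous inclusion of $L_p(Q)$ into $H^{-1}(Q)$ (using Sobolev embedding $H^1_0 \hookrightarrow L_{p'}$ in the relevant range). The case $p = 1$ requires more care, since $L_\infty$ is non-separable and $C_c^\infty(Q)$ is not dense in $L_{p'} = L_\infty$; one can handle it by extracting a weak-$\ast$ limit of $u^n$ in the Radon measure space $\mathcal{M}(\overline{Q})$ and concluding analogously, under the convention that $\|u\|_{L_1} = +\infty$ whenever $u$ fails to be represented by an $L_1$-function.
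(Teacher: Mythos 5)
Your proof is correct and reaches the same conclusion, but it runs on a slightly different engine than the paper's. The paper's argument is a compactness argument: after passing to a subsequence realizing the $\liminf$, it extracts a weakly convergent subsequence in $L_p(Q)$ (using reflexivity for $p\in(1,\infty)$), identifies that weak $L_p$-limit with $u$ by uniqueness of distributional limits, and invokes weak lower semicontinuity of the $L_p$-norm. For $p=\infty$ the paper then bootstraps from finite $p$ via $\|u\|_{L_p}\le|Q|^{1/p}\liminf\|u^n\|_{L_\infty}$ and lets $p\to\infty$. Your argument instead is a duality/extension argument: test the $H^{-1}$-weak limit against $\phi\in C_c^\infty(Q)$, use H\"older to get a bound $|\langle u,\phi\rangle|\le L\,\|\phi\|_{L_{p'}}$ with $L=\liminf\|u^n\|_{L_p}$, then use density of $C_c^\infty$ in $L_{p'}$ (for $p'<\infty$) and the representation $L_{p'}^*\cong L_p$ to upgrade $u$ to an $L_p$-function of norm $\le L$. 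What your route buys: no subsequence extraction and no appeal to reflexivity/Banach--Alaoglu, it treats $p=\infty$ on exactly the same footing as $1<p<\infty$, and it makes visible precisely which density of test functions is being used. What the paper's route buys: brevity, since lower semicontinuity of norms under weak convergence is a one-line citation once the subsequence is extracted. Both arguments are slightly informal at $p=1$: the paper's weak-compactness step in $L_1$ is not justified without Dunford--Pettis, and yours correctly flags that $C_c^\infty$ fails to be dense in $L_\infty$ and handles it via weak-$*$ compactness in $\mathcal{M}(\overline Q)$ with the convention $\|u\|_{L_1}=+\infty$ if $u\notin L_1$; this is, if anything, more careful than the paper's treatment.
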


\begin{proof}
Suppose first that $p\in [1, \infty)$. We assume that $\liminf_{n \to \infty} \|u^n\|_{L_p}< \infty$ or else there is nothing to prove. Under this assumption there exists a subsequence $u^{n_k}$ with $\lim_k \|u^{n_k}\|_{L_p}^p= \liminf_n  \|u^{n}\|_{L_p}^p$ and $v \in L_p(Q)$ such that $u^{n_k} \rightharpoonup v$ in $L_p(Q)$. It follows that $u=v \in L_p(Q)$, which finishes the proof since $\|v\|_{L_p} \leq \liminf \|u^{n_k}\|_{L_p}^p$.  For $p= \infty$ we have the following. We know that the conclusion holds for all $p \in [1, \infty)$. Hence,
$$
\|u\|_{L_p} \leq |Q|^{1/p}\liminf_{n \to \infty} \|u^n\|_{L_\infty},
$$
The assertion follows by letting $p \to \infty$. 
\end{proof}

We can now present our main theorems.

\begin{theorem}  \label{thm: theorem boundedness}
Suppose that  Assumptions \ref{as: coeff} and \ref{as: extra condition} are satisfied with $m>1$. Let {\color{black}$\mu \in \Gamma_d$} and  let $u \in \mathbb{H}^{-1}_{m+1}$ be the unique solution of \eqref{eq: main equation}. Then, we have 
\begin{equation}          
\E \|u \|^2 _{L_\infty(Q_T)} \ \leq N \E \left( \|\xi \|_{L_\infty(Q)}^2 +{\color{black}|S_\mu(f,g)|^2} \right),
\end{equation}
where 
$$
{\color{black}\mathcal{S}_r(f,g)=1+\|f\|_{L_\mu(Q_T)}+ \| |g|_{l_2} \|_{L_{2\mu}(Q_T)},}
$$
and $N$ is a constant depending only on $ m, T, \bar{c},K, d, \mu$ and $|Q|$. 
\end{theorem}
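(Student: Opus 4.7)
My approach is the vanishing viscosity method combined with Theorem \ref{thm: quasilinear nd}. Without loss of generality I assume the right-hand side of the claimed estimate is finite, else there is nothing to prove.

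\textbf{Step 1: Viscous approximation in the non-degenerate framework.} For $\varepsilon > 0$, let $u^\varepsilon \in \mathbb{H}^{-1}_{m+1} \cap \mathbb{L}_2$ be the solution of \eqref{eq: single approximation} provided by Lemma \ref{lem: regularity}. I would recast this equation in the form \eqref{eq: nd quasilinear} of Section 2 by taking
\[
a^{ij}(x,r) := \bigl(\Phi'(r) + \varepsilon + \tfrac{1}{2}\sigma_t^2\bigr)\delta^{ij},\quad F^i := b^i_t r,\quad F := (c_t - \D_i b^i_t) r + f_t,
\]
$g^{ik}(x,r) := \sigma_t r\,\delta^{ik}$ for $k\leq d$ and zero otherwise, $G^k := 0$ for $k \leq d$ and $G^k := \nu_t^{k-d} r + g_t^{k-d}$ for $k > d$. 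A direct computation then gives
$a^{ij} - \tfrac{1}{2}\D_r g^{ik}\D_r g^{jk} = (\Phi'(r) + \varepsilon)\delta^{ij}$,
because the Stratonovich–It\^o correction $\tfrac{1}{2}\sigma_t^2\Delta u^\varepsilon$ hidden inside $a^{ij}$ exactly cancels the It\^o correction coming from $g^{ik}$. Combined with Assumption \ref{as: extra condition}, this yields the ellipticity \eqref{eq: ellipticity} with $\tilde m = m-1$, $c = \bar c$, and $\theta = \varepsilon > 0$, where $\tilde m$ and $c$ are independent of $\varepsilon$. The remaining items in Assumption \ref{as: nd} follow from Assumption \ref{as: coeff}, with $V^1 \leq N(1+|f|)$ and $V^2 \leq N(1 + |g|_{l_2})$.

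\textbf{Step 2: Truncation to remove Assumption \ref{as:boundednessV}.} Since $V^1, V^2$ need not be bounded, I would truncate the data: for $n \in \mathbb{N}$, set $f^n := f\,\mathbf{1}_{|f|\leq n}$, $g^{k,n} := g^k\,\mathbf{1}_{|g|_{l_2}\leq n}$, $\xi^n := \xi\,\mathbf{1}_{|\xi|\leq n}$, and let $u^{\varepsilon,n}$ be the corresponding solution. Now Assumption \ref{as:boundednessV} is automatic, and Theorem \ref{thm: quasilinear nd} with $\alpha = 2$ yields
\[
\E \|u^{\varepsilon,n}\|_{L_\infty(Q_T)}^2 \leq N\,\E \bigl(1 + \|\xi\|_{L_\infty}^2 + \|f\|_{L_\mu(Q_T)}^2 + \||g|_{l_2}\|_{L_{2\mu}(Q_T)}^2\bigr)
\]
with $N$ independent of both $n$ and $\varepsilon$. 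Stability of \eqref{eq: single approximation} under $H^{-1}$-convergent data (via the monotonicity estimate \eqref{eq: monotonicity}, Gronwall, and the embedding $L_\mu \hookrightarrow H^{-1}$ for $\mu \geq 2$) gives $u^{\varepsilon,n}_t(\omega) \rightharpoonup u^\varepsilon_t(\omega)$ in $H^{-1}$ along a subsequence for a.e.\ $(\omega,t)$. Lemma \ref{lem: lsc} with $p=\infty$ applied pointwise, followed by essential supremum in $t$, expectation, and Fatou, transfers the above bound to $u^\varepsilon$.

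\textbf{Step 3: Vanishing viscosity.} Passing $\varepsilon\to 0$ follows the same pattern. The monotone-operator / variational argument already executed in the proof of Lemma \ref{lem: regularity} (where it was used for the limit $\Phi_n \to \Phi$) applies verbatim, using the uniform-in-$\varepsilon$ bounds from Remark \ref{rem:H10}, to give $u^\varepsilon_t(\omega)\rightharpoonup u_t(\omega)$ in $H^{-1}$ along a subsequence for a.e.\ $(\omega, t)$, where $u$ is the unique $\mathbb{H}^{-1}_{m+1}$-solution of \eqref{eq: main equation} from Theorem \ref{thm: well posedness}. A second application of Lemma \ref{lem: lsc} and Fatou yields the desired estimate. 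The main technical obstacle is Step 1: the uniform-in-$\varepsilon$ ellipticity is only possible because of the cancellation between the Stratonovich correction and $\tfrac{1}{2}\D_r g^{ik}\D_r g^{jk}$; this is precisely the reason \eqref{eq: PME stratonovich} is formulated in Stratonovich form.
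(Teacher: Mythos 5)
Your overall strategy — recast the viscous approximation \eqref{eq: single approximation} in the framework of Section~2 (correctly noting the cancellation $a^{ij}-\tfrac12\D_r g^{ik}\D_r g^{jk}=(\Phi'+\varepsilon)\delta^{ij}$, which is indeed the reason the Stratonovich form is used), apply Theorem~\ref{thm: quasilinear nd} with constants uniform in $\varepsilon$, and pass to the limit via Lemma~\ref{lem: lsc} and Fatou — matches the paper's proof of Theorem~\ref{thm: theorem boundedness}. The paper chooses the opposite order of limits, however: Step~1 assumes bounded $\xi,f,g$ (so both Assumption~\ref{as:boundednessV} and the $\mathbb{L}_2$-regularity of Lemma~\ref{lem: regularity} hold simultaneously) and carries out the $\varepsilon\to0$ limit there; Step~2 then removes the boundedness by truncating data directly in the degenerate equation \eqref{eq: main equation}. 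You instead do the truncation limit $n\to\infty$ at fixed $\varepsilon>0$ and only then send $\varepsilon\to0$.

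This reordering creates a genuine gap in your Step~3. There, the data $\xi,f,g$ are general, so you only know $\xi\in L_2(\Omega;H^{-1})$, $f\in L_2(\Omega_T;H^{-1})$, etc. Remark~\ref{rem:H10}, which you invoke for a uniform bound on $\E\int_0^T\|\nabla\Phi(u^\varepsilon)\|_{L_2}^2\,dt$, requires $\xi\in L_{m+1}(\Omega;L_{m+1}(Q))$ and $f,g\in L_{m+1}$, which you no longer have. Moreover, the monotone-operator scheme from Lemma~\ref{lem: regularity} does not apply ``verbatim'': there the varying object is the nonlinearity $\Phi_n\to\Phi$ (using the strong convergence $\Phi_n(y)\to\Phi(y)$ for a fixed test process $y$), whereas here $\Phi$ is fixed and the perturbation $\varepsilon\Delta u^\varepsilon$ is varying; one would need a separate argument to dispose of this term, and also a separate argument to upgrade the $L_2(\Omega_T;H^{-1})$-weak convergence to pointwise weak convergence in $H^{-1}$ for a.e.\ $(\omega,t)$, which Lemma~\ref{lem: lsc} needs. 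The paper bypasses all of this by a much simpler $H^{-1}$-Gronwall estimate, namely
\begin{equation*}
\E\|u^\varepsilon_t-u_t\|_{H^{-1}}^2 \leq N\,\E\int_0^T \varepsilon\,\bigl|{}_{(L_{m+1})^*}\langle\Delta u^\varepsilon,\,u^\varepsilon-u\rangle_{L_{m+1}}\bigr|\,dt,
\end{equation*}
and then ${}_{(L_{m+1})^*}\langle\Delta u^\varepsilon,u^\varepsilon-u\rangle_{L_{m+1}}=-\int_Q u^\varepsilon(u^\varepsilon-u)\,dx$, which is controlled by $\E\int_0^T\|u^\varepsilon\|_{L_{m+1}}^{m+1}+\|u\|_{L_{m+1}}^{m+1}\,dt$; this is finite and $\varepsilon$-uniform for general $H^{-1}$-data by coercivity. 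This gives strong $L_2(\Omega_T;H^{-1})$-convergence, hence a.e.-pointwise $H^{-1}$-convergence along a subsequence, without any $L_{m+1}$-data requirement. You should either adopt this Gronwall argument in your Step~3, or switch the order of the limits as in the paper.
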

\begin{proof}
\emph{Step 1:} In a first step we assume that $|\xi(x)|$, $|f_t(x)|$, $|g_t(x)|_{l_2}$ are bounded uniformly in $(\omega,t,x)$. 
Let $u^\varepsilon$ denote the unique solution of the problem \eqref{eq: single approximation}. 
By Assumptions \ref{as: coeff} and \ref{as: extra condition} we have that equation \eqref{eq: single approximation} satisfies Assumptions \ref{as: nd}-\ref{as:boundednessV} with  $\theta= \varepsilon$ , $c= \bar{c}$, $\tilde{m}=m-1$, and 

$$
a^{ij}_t(x,r)= \Phi'(r)I_{i=j}+\varepsilon I_{i=j}r+\sigma^2 _tr/2, $$
\begin{equs}
F^i_t(x,r)&=b^i_t(x)r,\qquad\qquad \qquad   F_t(x,r)&&= (c_t(x)-\D_ib^i_t(x))r+f_t(x),
\\
 g^{ik}_t(x,r)&=I_{i=k, k \leq d}\sigma_t r,\qquad \qquad G^k_t(x,r)&&= I_{k>d} (\nu ^{k-d}_t(x)r + g^{k-d}_t(x)),
\\
V^1_t(x)&= |f_t(x)|, \qquad \qquad \qquad V^2_t(x)&&= |g_t(x)|_{l_2}.
\end{equs}
By Lemma \ref{lem: regularity} we have that $u^\varepsilon$ satisfies equation \eqref{eq: single approximation} also in the sense of Definition \ref{def: definition L2}.
Therefore, by Theorem \ref{thm: quasilinear nd} we have 
\begin{equation}          \label{eq: boundedness epsilon}
\E \|u^\varepsilon \|^2 _{L_\infty(Q_T)} \ \leq N \E \left( 1+\|\xi \|_{L_\infty(Q)}^2 +|S_\mu(f,g)|^2 \right),
\end{equation}
where $N$ is a constant depending only on $ m, T, \bar{c},K,d, r$, and $|Q|$. By It\^o's formula, the monotonicity of $\Phi$, and \eqref{eq: part of coercivity}, one can easily see that 
for a constant $N$ independent of $\varepsilon$ we have 
$$
\|u_t-u_t^\varepsilon\|_{H^{-1}}^2 \leq N \int_0^t \|u-u^\varepsilon\|_{H^{-1}}^2 + \varepsilon2 {}_{(L_{m+1})^*}\langle \Delta u^\varepsilon , u^\varepsilon-u  \rangle_{L_{m+1}} \, ds +M^\varepsilon_t,
$$
for a local martingale $M^\varepsilon_t$.
Hence, 
\begin{equation}                           \label{eq: viscosity convergence}
\E  \|u^\varepsilon_t-u_t\|^2_{H^{-1}} \leq N \E \int_0^T \varepsilon | {}_{(L_{m+1})^*}\langle \Delta u^\varepsilon , u^\varepsilon-u  \rangle_{L_{m+1}}| \,  dt.
\end{equation}

Moreover, by It\^o's formula,  Assumption \ref{as: coeff}, \eqref{item: coercivity of phi}, and \eqref{eq: part of coercivity}
we have for a constant $N$ independent of $\varepsilon$
$$
\E \int_0^T \| u^\varepsilon \|_{L_{m+1}}^{m+1} \, dt \leq N \E \left( 1+ \| \xi\|_{H^{-1}}^2+ \int_0^T \|f\|^2_{H^{-1}}+ \sum_{k=1}^\infty\|g^k\|_{H^{-1}}^2 \, dt \right)< \infty.
$$
The same estimate holds for $u$. Hence, by virtue of \eqref{eq: viscosity convergence}, we have
\begin{equation*}         \label{eq: u epsilon to u}
\lim_{ \varepsilon \to 0} \E  \int_0^T  \| u_t-u_t^\varepsilon\|_{H^{-1}}^2\, dt=0.
\end{equation*}
In particular, for a sequence $\varepsilon_k \to 0$ we have $\|u^{\varepsilon_k}_t-u_t\|_{H^{-1}} \to 0$ for almost all $(\omega,t)$.
By Lemma \ref{lem: lsc}, Fatou's lemma, and \eqref{eq: boundedness epsilon}, we  have for any $p \in[1, \infty)$
\begin{align*}
\E \|u \|^2 _{L_p(Q_T)}  & \leq \liminf_k \E \|u^{\varepsilon_k} \|^2_{L_p(Q_T)}
\leq \liminf_k N \E \|u^{\varepsilon_k} \|^2 _{L_\infty(Q_T)}
\\
&\leq  N \E \left( 1+\|\xi \|_{L_\infty(Q)}^2 +|S_\mu(f,g)|^2 \right),
\end{align*}
with $N$ independent of $p$, and the result follows by letting $p \to \infty$.

\emph{Step 2:} For general $\xi,  f , g$ we set 
\begin{align*}
\xi^n&= ((-n)\vee \xi) \wedge n, \qquad 
f^n= ((-n)\vee f) \wedge n, 
\\
g^n&= \sum_{k=1}^{n}\left(  ((-C_n)\vee g^{k})\wedge C_n   \right) e_k 
\end{align*}
where $(e_k)_{k=1}^\infty$ is the usual orthonormal basis of $l_2$ and $C_n \geq 0$  are chosen such that 
$  \E \int_0^T \| |g^{n}-g|_{l_2}\|^2_{L_2} \, dt \to 0$. Let $u^n$ be the solution of the equation corresponding to the truncated data. Then by It\^o's formula one can easily check that
\begin{align*}
\E \int_0^T\|u^n_t-u_t\|^2_{H^{-1}}\, dt & \leq N \E  \|\xi -\xi^n\|_{H^{-1}}^2
\\
& + N \E \left( \int_0^T \|f-f^n\|^2_{H^{-1}}+ \sum_{k=1}^\infty\|g^k-g^{n,k}\|_{L_2}^2 \, dt \right)\to 0,
\end{align*}
as $n \to \infty$. Since for each $n\in \mathbb{N}$ we have 
$$
\E \|u^n \|^2 _{L_\infty(Q_T)} \ \leq N \E \left( 1+\|\xi \|_{L_\infty(Q)}^2 +|S_\mu(f,g)|^2 \right),
$$
the result follows by virtue of Lemma \ref{lem: lsc}. 
\end{proof}

\begin{theorem} \label{thm: smoothing PME}
Suppose that Assumptions \ref{as: coeff} and \ref{as: extra condition} are satisfied with $m>1$. Let {\color{black}$\mu \in \Gamma_d$} and let $u$ be the solution of \eqref{eq: main equation}. Then, for all $\rho \in (0,1)$ we have 
\begin{align}          
\E \|u \|^2 _{L_\infty((\rho, T) \times  Q)} \ &\leq \rho^{-\tilde{\theta}} N \E \left(\| \xi \|_{H^{-1}}^2 +{\color{black}|S_\mu(f,g)|^2} \right),
\end{align}
where $S_\mu(f,g)$ is as in Theorem \ref{thm: theorem boundedness},  $N$ is a constant depending only on  $C,  m, T, c,K,d, \mu$ and $|Q|$, and $\tilde{\theta}>0$  is a constant depending only on $ d, \mu$ and $m$.
\end{theorem}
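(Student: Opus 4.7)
The plan is to follow the same scheme as Theorem \ref{thm: theorem boundedness}, but with Lemma \ref{thm: smoothing} playing the role of Theorem \ref{thm: quasilinear nd}, supplemented by the standard $H^{-1}$-energy estimate for the porous-medium operator in order to convert the $L_2(Q_T)$-control provided by Lemma \ref{thm: smoothing} into control by $\|\xi\|_{H^{-1}}$.

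First I would truncate $\xi$, $f$, $|g|_{l_2}$ to make Assumption \ref{as:boundednessV} applicable, intending to remove the truncation at the end by the stability argument already used in Step 2 of the proof of Theorem \ref{thm: theorem boundedness}. Let $u^\varepsilon\in\mathbb{L}_2$ be the unique solution of \eqref{eq: single approximation} provided by Lemma \ref{lem: regularity}. With the coefficient identifications used in Theorem \ref{thm: theorem boundedness} (so $\tilde m=m-1$, $c=\bar c$, $\theta=\varepsilon$, $V^1=|f|$, $V^2=|g|_{l_2}$), the equation \eqref{eq: single approximation} fits Assumptions \ref{as: nd}--\ref{as:boundednessV}, and the constants in Lemma \ref{thm: smoothing} are independent of $\theta$, hence of $\varepsilon$. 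Applying that lemma with $\alpha=2$ yields
\begin{equation*}
\E\|u^\varepsilon\|^2_{L_\infty((\rho,T)\times Q)}\leq \rho^{-\tilde\theta} N\,\E\bigl(1+\|u^\varepsilon\|^2_{L_2(Q_T)}+|\mathcal{V}_\mu|^2\bigr),
\end{equation*}
uniformly in $\varepsilon$.

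Next I would dominate $\E\|u^\varepsilon\|^2_{L_2(Q_T)}$ by the target right-hand side. Applying It\^o's formula to $\|u^\varepsilon_t\|^2_{H^{-1}}$, using \eqref{eq: part of coercivity} for the $H$-plus-noise part, discarding the favourable viscosity term $-2\varepsilon\|u^\varepsilon\|_{L_2}^2$, and invoking Assumption \ref{as: extra condition} via ${}_{(L_{m+1})^*}\langle\Delta\Phi(u^\varepsilon),u^\varepsilon\rangle_{L_{m+1}}=-(\Phi(u^\varepsilon),u^\varepsilon)_{L_2}\leq-\bar c\|u^\varepsilon\|^{m+1}_{L_{m+1}}$, one obtains the energy estimate
\begin{equation*}
\E\int_0^T\|u^\varepsilon\|^{m+1}_{L_{m+1}}\,dt\leq N\,\E\Bigl(\|\xi\|^2_{H^{-1}}+\int_0^T\bigl(\|f\|^2_{H^{-1}}+\textstyle\sum_k\|g^k\|^2_{H^{-1}}\bigr)\,dt\Bigr),
\end{equation*}
with $N$ independent of $\varepsilon$. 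Since $m\geq 1$, the elementary inequality $x^2\leq 1+x^{m+1}$ combined with H\"older in $(x,t)$ gives $\|u^\varepsilon\|^2_{L_2(Q_T)}\leq N\bigl(1+\|u^\varepsilon\|^{m+1}_{L_{m+1}(Q_T)}\bigr)$; on the right-hand side, $\|\cdot\|_{H^{-1}}\lesssim\|\cdot\|_{L_2}$ by Poincar\'e and H\"older in space-time convert the $L_2$-norms of $f$ and $|g|_{l_2}$ into $L_\mu$ and $L_{2\mu}$ norms (using $\mu\geq 2$). Altogether one concludes $\E\|u^\varepsilon\|^2_{L_2(Q_T)}\leq N\,\E(1+\|\xi\|^2_{H^{-1}}+|S_\mu(f,g)|^2)$, and combining with the smoothing step produces the desired estimate for $u^\varepsilon$ uniformly in $\varepsilon$.

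Finally I would pass to the limit exactly as in the proof of Theorem \ref{thm: theorem boundedness}: extract $\varepsilon_k\to 0$ along which $\|u^{\varepsilon_k}_t-u_t\|_{H^{-1}}\to 0$ for a.e.\ $(\omega,t)$, use Lemma \ref{lem: lsc} and Fatou (first for $L_p((\rho,T)\times Q)$ norms and then letting $p\to\infty$) to inherit the estimate for $u$, and remove the truncation of $\xi,f,g$ by the same stability-plus-lower-semicontinuity argument as in Step 2 of the proof of Theorem \ref{thm: theorem boundedness}. The main obstacle is ensuring the linear dependence on $\|\xi\|^2_{H^{-1}}$: it is precisely the assumption $m>1$ (so that $m+1\geq 2$) together with the positivity of $(\Phi(u^\varepsilon),u^\varepsilon)_{L_2}$ that makes the inequality $\|u^\varepsilon\|^2_{L_2(Q_T)}\lesssim 1+\|u^\varepsilon\|^{m+1}_{L_{m+1}(Q_T)}$ compatible with the quadratic $H^{-1}$-energy identity and preserves the correct scaling.
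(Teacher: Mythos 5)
Your proposal is correct and mirrors the paper's argument step for step: apply Lemma \ref{thm: smoothing} to the viscous approximation $u^\varepsilon$, derive the $H^{-1}$-energy estimate for $\E\int_0^T\|u^\varepsilon\|^{m+1}_{L_{m+1}}\,dt$ via It\^o's formula, Assumption \ref{as: coeff}(1), and Lemma \ref{lem: monotonicity}, use $m+1>2$ to convert to $L_2(Q_T)$, and pass to the limit with Lemma \ref{lem: lsc} and Fatou. One small wrinkle worth noting: since $\xi$ is only $H^{-1}$-valued, a pointwise truncation of $\xi$ in Step 1 does not literally make sense, and the paper instead regularizes to $\xi^n\in L_{m+1}(\Omega;L_{m+1}(Q))$ with $\|\xi^n\|_{H^{-1}}\leq\|\xi\|_{H^{-1}}$ and $\xi^n\to\xi$ in $L_2(\Omega;H^{-1})$; your plan requires this adjustment but is otherwise sound.
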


\begin{proof}
\emph{Step 1:} In a first step we assume that $\xi \in  L_{m+1}(\Omega, L_{m+1}(Q))$ and that $|f_t(x)|$, $|g_t(x)|_{l_2}$ are bounded uniformly in $(\omega,t,x)$. 
Let $u^\varepsilon$ denote the unique solution of the problem \eqref{eq: single approximation}. By Lemma \ref{lem: regularity} $u^\varepsilon$ is a solution also in the sense of Definition \ref{def: definition L2}. Hence,  by Lemma \ref{thm: smoothing} we have 
\begin{equation}              \label{eq:K}
\E \|u^\varepsilon \|^2 _{L_\infty((\rho, T) \times  Q)} \ \leq \rho^{-\tilde{\theta}} N \E \left( 1+\| u^\varepsilon \|_{L_2(Q_T)}^2  +{\color{black}|\mathcal{S}_r(f,g)|^2} \right),
\end{equation}
with $N$ independent of $\varepsilon$. By It\^o's formula, Assumption \ref{as: coeff} \eqref{item: coercivity of phi} and Lemma \ref{lem: monotonicity} we have 
\begin{align}
\nonumber
&\|u^\varepsilon_t\|^2_{H^{-1}}+ \int_0^t \| u^\varepsilon\|^{m+1}_{L_{m+1}} \, ds 
\\
\label{eq:G}
&\leq N\left(1+ \|\xi\|^2_{H^{-1}}+\int_0^t\|u^\varepsilon\|^2_{H^{-1}}+\|f\|_{H^{-1}}^2+\sum_{k=1}^\infty\|g^k\|_{H^{-1}}^2 \, ds\right) +M_t,
\end{align}
for a local martingale, with $N$ independent of $\varepsilon$. After a localization argument and Gronwal's lemma one gets
$$
\E \int_0^T \| u^\varepsilon\|^{m+1}_{L_{m+1}} \, ds \leq 
N\E \left(1+ \|\xi\|^2_{H^{-1}}+\int_0^T (\|f\|_{H^{-1}}^2+\sum_{k=1}^\infty\|g^k\|_{H^{-1}}^2 ) \, ds\right).
$$
Plugging this in \eqref{eq:K} ($m+1>2$) gives the desired inequality.

\emph{Step 2:} For general $\xi, f$ and $g$, one can proceed as in the proof of Theorem \ref{thm: theorem boundedness}, this time choosing  $\xi^n \in L_{m+1}(\Omega; L_{m+1}(Q))$ such that $\lim_n\E\|\xi ^n- \xi\|_{H^{-1}}^2 =0$ and $\|\xi_n\|_{H^{-1}} \leq \|\xi\|_{H^{-1}}$ almost surely. This finishes the proof.
\end{proof}

\begin{remark} \label{rem}
As already seen, for any  $\xi \in L_2(\Omega; H^{-1})$, the corresponding solution $u$ of \eqref{eq: main equation} belongs to the space $ L_{m+1}(\Omega_T; L_{m+1}(Q))$. Consequently, there exists arbitrarily small $s>0$ such that 
$\E \|u_s \|_{L_{m+1}}^{m+1}< \infty$. By Remark \ref{rem:H10} the quantity $ \E \|\Phi(u^{\varepsilon}) \|_{L_2(s,T;H^1_0)}^2$ (where $u^{\varepsilon}$ is the solution of \eqref{eq: single approximation} starting at time $s$ from $u^{\varepsilon}_s=u_s$) can be controlled by $\E \|u_s\|^{m+1}_{L_{m+1}}$. Using this, one can use again the theory of monotone operators to show that the weak limit of $\Phi(u^\varepsilon)$ in $L_2(\Omega \times (s,T);H^1_0)$ coincides with $\Phi(u)$. In particular, the solution $u$ is strong on the time interval $(s,T)$, that  is, $\Phi(u_t)\in H^1_0(Q)$ for a.e. $(\omega,t) \in \Omega \times (s,T)$.  
\end{remark}

\appendix
\section{}

\begin{lemma} \label{lem:Appendix}
Let $Q \subset \bR^d$ be an open bounded set and let  $R  \in C^1 (\overline{Q} \times \bR)$ be such that there exist $N \in \bR$, $p \in [2, \infty)$ and $g \in L_p(Q)$  such that for all $(x,r) \in  \bar{Q} \times \bR$
\begin{equation}
 |R(x,r)|+| \nabla_x R(x,r)| \leq N+|g(x)||r|^{p-2} +N |r|^{p-1}.
 \end{equation}

Set 
$$
G(x,r):= \int_0^r R(x,s) \, ds,
$$
and let  $u \in H^1_0(Q)$ be such that 
\begin{equation}                      \label{eq:integrability-u}
\int_Q |u|^p \, dx + \int_Q | \nabla u |^2|u|^{p-2} \, dx < \infty.
\end{equation}
Then $G(\cdot, u) \in W^{1,1}_0(Q)$. 
\end{lemma}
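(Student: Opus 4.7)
\emph{Plan.} The statement splits into two tasks: identifying the weak gradient of $G(\cdot,u)$ and showing it is in $L^1$ (i.e.\ $G(\cdot,u) \in W^{1,1}(Q)$), and upgrading this to $W^{1,1}_0(Q)$ by producing an approximating sequence in $C^\infty_c(Q)$.

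For the first task, integrating the pointwise bound on $R$ yields
$$
|G(x,r)| \leq N|r| + \tfrac{|g(x)|}{p-1}|r|^{p-1} + \tfrac{N}{p}|r|^p,
$$
and the same argument applied to $\nabla_x R$ produces an identical bound for $(\partial_{x_i}G)(x,r)$. Evaluating at $u(x)$ and invoking $u \in L^p(Q)$ (from \eqref{eq:integrability-u}), together with Hölder's inequality $\int_Q |g||u|^{p-1}\,dx \leq \|g\|_{L^p}\|u\|_{L^p}^{p-1}$, shows $G(\cdot,u), (\partial_{x_i}G)(\cdot,u) \in L^1(Q)$. The weak gradient will be $\partial_i G(\cdot,u) = (\partial_{x_i}G)(\cdot,u) + R(\cdot,u)\,\partial_i u$; the only non-routine term is $R(\cdot,u)\partial_i u$, which I would bound using the splitting
$$
|g(x)|\,|u|^{p-2}|\partial_i u| = \bigl(|g(x)|\,|u|^{(p-2)/2}\bigr)\bigl(|u|^{(p-2)/2}|\partial_i u|\bigr)
$$
and Cauchy--Schwarz, noting that $\int|g|^2|u|^{p-2}\,dx \leq \|g\|_{L^p}^2 \|u\|_{L^p}^{p-2}$ by Hölder and that the second factor is controlled by \eqref{eq:integrability-u}. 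The remaining pieces $N|\partial_i u|$ and $N|u|^{p-1}|\partial_i u|$ are handled in the same fashion.

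For the second task I would exhibit approximations in $C^\infty_c(Q)$. First, truncate: set $u^{(k)} := ((-k)\vee u)\wedge k \in H^1_0(Q)\cap L^\infty(Q)$, for which $\nabla u^{(k)} = \mathbf{1}_{|u|<k}\nabla u$. The dominations $|u^{(k)}|^p \leq |u|^p$ and $|u^{(k)}|^{p-2}|\nabla u^{(k)}|^2 \leq |u|^{p-2}|\nabla u|^2$, together with pointwise convergence, yield $G(\cdot,u^{(k)}) \to G(\cdot,u)$ in $W^{1,1}(Q)$ by dominated convergence applied termwise to the expressions derived above. Second, for each fixed $k$ approximate $u^{(k)}$ by $u^{(k)}_n \in C^\infty_c(Q)$ through mollification and cutoff, re-truncating at $\pm k$ and re-mollifying if needed, so that $\|u^{(k)}_n\|_{L^\infty}\leq k$ and $u^{(k)}_n \to u^{(k)}$ in $H^1_0$. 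Since $u^{(k)}_n$ is compactly supported and $G(x,0)=0$, we have $G(\cdot,u^{(k)}_n) \in C^1_c(Q)$; a final mollification (if necessary) produces elements of $C^\infty_c(Q)$. The uniform pointwise bound $|R(x,u^{(k)}_n(x))| \leq N + k^{p-2}|g(x)| + Nk^{p-1}$, which is in $L^2(Q)$ because $p\geq 2$ and $g \in L^p(Q)$, lets me invoke dominated convergence together with the $L^2$ convergence of $\nabla u^{(k)}_n$ to conclude $G(\cdot,u^{(k)}_n) \to G(\cdot,u^{(k)})$ in $W^{1,1}(Q)$. A diagonal extraction then produces a sequence in $C^\infty_c(Q)$ converging to $G(\cdot,u)$ in $W^{1,1}(Q)$.

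The main obstacle is the second step of the approximation: controlling the nonlinear composition under $H^1_0$-perturbation of $u^{(k)}$. Truncating at height $k$ is essential, because it converts the polynomial growth $|R(x,r)| \lesssim 1 + |g(x)||r|^{p-2} + |r|^{p-1}$ into a uniform-in-$n$ majorant that is square-integrable in $x$, which is exactly what is needed to pair, via Cauchy--Schwarz and dominated convergence, with $L^2$ convergence of $\nabla u^{(k)}_n$. Without this truncation, one would need $L^2$-control of $|u^{(k)}_n|^{p-1}$ that is not supplied merely by $H^1_0$-convergence.
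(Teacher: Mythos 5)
Your argument is correct and reaches the same conclusion, but it is organized around a different truncation than the paper's. The paper truncates the \emph{integrand} $R$ in the $r$-variable, defining $R_n(x,r) := R(x, (r\wedge n)\vee(-n))$ and $G_n(x,r):=\int_0^r R_n(x,s)\,ds$. Since $R$ is continuous on the compact set $\bar Q\times[-n,n]$, this makes $R_n$ \emph{uniformly bounded} by a constant $N(n)$, so $G_n$ is Lipschitz in $r$ with an $x$-independent Lipschitz constant and $|\nabla_x G_n(x,r)|\le N(n)|r|$. One then approximates $u$ by smooth $u^m$ once to conclude $G_n(\cdot,u)\in W^{1,1}_0$, and finally lets $n\to\infty$ using the same dominated-convergence majorants you derived. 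You instead truncate the \emph{argument} $u$ at level $k$, which achieves the same end (Lipschitz control on the domain where the composition lives), but here the Lipschitz constant of $G(x,\cdot)$ on $[-k,k]$ is $N+k^{p-2}|g(x)|+Nk^{p-1}$, an $L^2(Q)$ function rather than a scalar, so you need the extra observation $g\in L^p(Q)\subset L^2(Q)$ to pair it with $\nabla u_n^{(k)}\to\nabla u^{(k)}$ in $L^2$. Both routes are sound; the paper's choice makes the inner Lipschitz constant a number and so the smoothing step of $u$ slightly cleaner.

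Two presentational points, neither fatal. First, when you write in step one that ``the weak gradient will be $(\partial_{x_i}G)(\cdot,u)+R(\cdot,u)\partial_i u$'', you have only bounded that candidate in $L^1$; the fact that it \emph{is} the weak gradient is exactly what your approximation in the second task establishes, so logically that task should come first (establish the chain rule for bounded, then for general $u$ via (b)), or the claim should be stated as an announcement. The paper has the same structure, just stated in the opposite order. Second, ``re-truncating at $\pm k$ and re-mollifying'' does not literally preserve the bound $\|u_n^{(k)}\|_{L_\infty}\le k$ after re-mollification; the easy fix is to demand $\|u_n^{(k)}\|_{L_\infty}\le k+1$ (which mollification of a cut-off at $\pm k$ delivers automatically for small bandwidth), and use the $L^2$ majorant $N+(k+1)^{p-2}|g|+N(k+1)^{p-1}$. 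With these small adjustments the proof is complete.
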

\begin{proof}
Let us set 
\[
    R_n(x,r) := \left\{\begin{array}{lcr}
         R(x,r),  &\text{for}& \qquad |r| \leq n \\
         R (x,n),  &\text{for}& \qquad r > n \\
         R(x,-n),  &\text{for}& \qquad r < -n
        \end{array} \right.
  \]
  and 
  $$
  G_n(x,r) : = \int_0^r R_n(x,s) \, ds.
  $$
  It follows that $ \nabla_x G_n(x, r)$ and $\D_r  G_n(x, r)$ are continuous in $(x,r) \in \overline{Q} \times \bR$, and they satisfy with  some constant $N(n)$, for all $(x,r) \in \overline{Q} \times \bR$
  $$
  |\nabla_x G_n(x, r)| \leq N(n) |r|, \qquad |\D_r  G_n(x, r)| \leq N(n).
  $$ 
  Moreover, we have $G_n(x,0)=0$. 
  Hence, by approximating $u$ in $H^1_0(Q)$ with $u^m \in C^\infty_c(Q)$, one concludes easily that  $G_n(\cdot, u) \in W^{1,1}_0(Q)$. Notice that there exists a constant $N$, such that for all $n \in \mathbb{N}, \ (x,r) \in \overline{Q} \times \bR$ we have 
 \begin{enumerate}[(i)]
\item $|G_n(x,r)| \leq N(1+|g(x)|^p +|r|^{p})$,
\item $ | \nabla_x G_n(x,r)| \leq N(1+|g(x)|^p +|r|^{p}) $,
\item $|\D_rG_n(x,r)| \leq N(1+| g(x)||r|^{p-2}+|r|^{p-1})$.
\end{enumerate}
This implies by Young's inequality 
\begin{align*}
|G_n(x,u)| &\leq N(1+|g(x)|^p +|u|^{p}),
\\
|\nabla_x G_n(x,u)| + |\D_rG_n(x,u)| |\nabla u| &\leq N(1+| g(x)|^p+|u|^p+|u|^{p-2}|\nabla u|^2).
\end{align*} By Lebesgue's theorem on dominated convergence we have 
$G_n(\cdot, u)\to G(\cdot,u)$ and $ \nabla_x( G_n(\cdot, u)) \to \nabla_xG(\cdot, u)+ \D_rG(\cdot, u) \nabla_x u$
in $L_1(Q)$, and the claim follows since $ G_n(\cdot,u) \in W^{1,1}_0(Q)$ for all $n \in \mathbb{N}$.
\end{proof}

\section*{Acknowledgement}
B. Gess acknowledges financial support by the DFG through the CRC 1283 ``Taming uncertainty and profiting from randomness and low regularity in analysis, stochastics and their applications".

\end{document}